\documentclass{article}
\usepackage[utf8]{inputenc}

\usepackage{geometry}
\usepackage{eufrak}

\usepackage{appendix}
\usepackage{amsfonts, amssymb, amsmath, stmaryrd, amsthm, accents, bbm}
\usepackage{mathtools}
\numberwithin{equation}{section}
\usepackage[linesnumbered,ruled,vlined]{algorithm2e}
\usepackage{enumitem}
\usepackage{caption2}

\usepackage[pdftex,
colorlinks=true, 
linkcolor=blue, 
citecolor=blue, 
pagebackref=false, 
]{hyperref}

\usepackage[version=4,arrows=pgf-filled,
textfontname=sffamily,
mathfontname=mathsf]{mhchem}
\usepackage{braket}
\usepackage{comment}
\usepackage[normalem]{ulem}
\usepackage{xcolor}
\usepackage[colorinlistoftodos]{todonotes}

\usepackage[official]{eurosym}
\usepackage{booktabs}
\usepackage{tabularx}
\usepackage{makecell}
\usepackage{array}

\newcolumntype{C}[1]{>{\centering\arraybackslash}m{#1}}
\newcommand{\celltop}{\rule{0pt}{5.5ex}}

\newtheorem{proposition}{Proposition}[section]
\newtheorem{corollary}{Corollary}[section]
\newtheorem{definition}{Definition}[section]
\newtheorem{theorem}{Theorem}[section]
\newtheorem{lemma}{Lemma}[section]
\newtheorem{remark}{Remark}[section]
\newtheorem{assumption}{Assumption}[section]

\newcommand{\norm}[1]{\left\Vert #1 \right\Vert}
\newcommand{\abs}[1]{\left\vert #1 \right\vert}
\newcommand{\ind}{\mathbbm{1}}
\newcommand{\R}{\mathbb{R}}
\newcommand{\N}{\mathbb{N}}
\newcommand{\E}{\mathbb{E}}

\newcommand{\bP}{\mathbb{P}}

\newcommand{\cP}{\mathcal{P}}
\newcommand{\cT}{\mathcal{T}}
\newcommand{\cA}{\mathcal{A}}
\newcommand{\cF}{\mathcal{F}}
\newcommand{\cG}{\mathcal{G}}
\newcommand{\cL}{\mathcal{L}}

\title{Non-Potential Congestion Games with a Continuum of Players} 

\author{Alix Dupont\footnote{EDF R\&D, 91120 Palaiseau, France. E-mail: \href{alix.dupont@edf.fr}{alix.dupont@edf.fr}.}, Quentin Petit\footnote{EDF R\&D, 91120 Palaiseau, France and Finance for Energy Market Research Centre (FIME Lab). E-mail: \href{quentin.petit@edf.fr}{quentin.petit@edf.fr}.}, and Adrien Séguret\footnote{EDF R\&D, 91120 Palaiseau, France and Finance for Energy Market Research Centre (FIME Lab). E-mail: \href{adrien.seguret@edf.fr}{adrien.seguret@edf.fr}.}}

\begin{document}

\maketitle


\begin{abstract}
We study a class of congestion games with a continuum of heterogeneous players and a finite set of resources, without assuming the existence of a potential function. Players differ in their individual characteristics and may contribute differently to congestion, which is represented through general aggregate features of the population choosing each resource. We characterize the conditions under which such a game admits a potential and show that this structure generally fails when players have heterogeneous impacts on congestion. To analyze the non-potential case, we introduce the concept of Stochastic Variational Nash Equilibrium, in which players observe idiosyncratic cost shocks before selecting a resource. We show that equilibrium choices are pure conditional on players' characteristics and shock realizations, whereas the induced aggregate allocation is probabilistic. Under Gumbel-distributed shocks, the equilibrium is equivalent to a continuum-player logit Quantal Response Equilibrium. Using a fixed-point formulation and the Schauder--Tychonoff theorem, we establish existence for every positive noise level and recover a pure Nash equilibrium in the zero-noise limit. Under a coordinate-wise monotonicity condition on aggregate interaction costs, we further prove uniqueness and convergence of the regularized allocations to the pure equilibrium. An application to electric vehicle charging illustrates how heterogeneous energy demands jointly affect station choice, congestion, and market shares.
\end{abstract}

\noindent\textbf{Keywords. }Congestion game, nonatomic game, variational equilibrium

\section{Introduction}
\label{sec:introduction}

\subsection{General introduction}
Congestion games, which model strategic interactions among players competing for 
shared resources, are a foundational topic in game theory. Their study in settings 
with a large, atomless population of players has a rich history, originating in the 
economic analysis of markets with a continuum of traders \cite{Aumann1964,Aumann1966} 
and placed on rigorous game-theoretic foundations by Schmeidler \cite{schmeidler1973equilibrium}, 
who established the existence of pure-strategy Nash equilibria in nonatomic games. 
This framework was subsequently developed in the context of congestion games by 
Milchtaich \cite{milchtaich2000generic,milchtaich2004social}, who studied equilibrium 
uniqueness and social optimality in nonatomic crowding games. The resulting continuum framework is particularly well-suited for applications 
in urban transportation, the canonical example being the Wardrop equilibrium 
principle \cite{wardrop1952road}, as well as in telecommunications, 
economics, or environmental science.

A significant portion of the literature on congestion games has focused on the 
class of potential games. The use of a potential function 
setting was pioneered by Beckmann, McGuire and Winsten \cite{beckmann1956studies} 
in the context of traffic equilibrium. The concept was later given a general treatment by Monderer and Shapley \cite{monderer1996potential}, who introduced 
the term \emph{potential game} and characterized this class of games. In all these settings, the existence of a potential function, whose optima correspond 
to Nash equilibria, transforms the strategic problem into a variational 
one. This powerful 
property has been exploited extensively. In the realm of nonatomic games, Crippa, Jimenez, and Pratelli \cite{crippa2009optimum} use a variational 
approach to analyze a model with queueing effects where congestion at a resource depends solely on 
the total mass of players choosing this resource, a setting that falls squarely within the potential game 
framework. However, the conditions for a game to admit a potential are restrictive. 
The work of Harks, Klimm, and M\"{o}hring \cite{harks2011characterizing} provides a 
sharp characterization for discrete weighted congestion games, demonstrating that even 
simple forms of player-specific heterogeneity can break the potential structure. This 
suggests that in many realistic scenarios, where players contribute non-uniformly to 
congestion, optimisation-based methods are inapplicable.

In this paper, we address this limitation by studying a static congestion game with 
a continuum of heterogeneous players facing a discrete choice, without assuming that 
the game admits a potential. We consider a population of atomless players distributed 
over a characteristics space $X$, where the interaction cost $h_i(a_i)$ associated 
to resource $i$ can depend on complex aggregate measures $a_i$ defined on $X$ representing the proportion of players choosing $i$ depending on their characteristics, such as arbitrary moments. This formulation 
allows for rich heterogeneity in how players contribute to congestion, and strictly 
generalizes the potential setting of \cite{crippa2009optimum}. While player heterogeneity is considered in \cite{blanchet2016optimal}, the players' impact on congestion remains homogeneous in their setting. To handle more complex interactions, \cite{Jacquot2022} introduced the notion of \textit{Variational Wardrop Equilibrium} to characterize Wardrop equilibria for non-atomic games with heterogeneous players and coupling aggregative constraints. Furthermore, characterizing the Nash equilibrium of a non-atomic population is frequently studied to solve the lower level of bilevel optimization problems, with applications in optimal pricing models \cite{carlier2018optimal}, optimal allocation \cite{guerra2026optimal}, and spatial equilibria \cite{achdou2023simple}. More complex interactions among players in a dynamic setting can be found in Mean Field Game theory (MFG) \cite{lasry2006jeux, lasry2007mean}.

We provide a characterization for 
the existence of a potential in this functional setting, offering a continuous-game 
counterpart to the discrete analysis of \cite{harks2011characterizing}. Our primary 
focus, however, is on the general non-potential case, which requires tools beyond 
optimisation.
To tackle the non-potential setting, we employ a stochastic regularization method. 
This leads to the concept of a \emph{Stochastic Variational Nash Equilibrium} (SVNE), 
which serves as the continuum-player analogue of a Quantal Response Equilibrium (QRE). 
The QRE framework, introduced by McKelvey and Palfrey \cite{mckelvey1995quantal}, 
models boundedly rational players who make noisy best responses, with the probability 
of choosing a strategy increasing with its expected payoff. Although an SVNE and a QRE induce the same conditional choice probabilities, the SVNE provides a richer representation of equilibrium behavior by explicitly describing players’ choices for each realization of the idiosyncratic shocks. We succeed to show that players play in pure strategies upon observing this 
noise, even though they appear mixed from an aggregate perspective. We prove that a 
SVNE is equivalent to a continuum QRE, where a noise parameter $\sigma > 0$ controls 
the degree of irrationality. By assuming that the idiosyncratic noise follows a Gumbel 
distribution, one recovers the well-known logit choice model, a cornerstone of 
discrete choice theory since the work of McFadden \cite{mcfadden1974conditional}. As 
$\sigma \to 0$, players approach perfect rationality and their choices converge to a 
deterministic Nash Equilibrium (NE). Beyond its behavioral interpretation, this 
regularization provides a powerful mathematical device: it ensures the existence of 
a SVNE for any $\sigma > 0$, from which a deterministic NE is then recovered in the 
limit.

Since the game is not assumed to be a potential game, we cannot rely on optimisation 
techniques. Instead, the analysis of the regularized game hinges on fixed-point 
theory. The equilibrium conditions are reformulated as a fixed-point equation 
$a = F(a)$, where $F$ is a nonlinear operator derived from the logit choice 
probabilities. Existence of a fixed-point is established via the  Schauder–Tychonoff 
theorem, which is well-adapted to this infinite-dimensional setting.

Uniqueness of the NE is established under a monotonicity condition on the interaction 
costs. These types of condition are well-known to provide uniqueness \cite{smith1979existence}. Here, we assume that the cost $h_i$ depends only on a finite 
collection of aggregate quantities and is monotone with respect to these aggregates. Under this 
condition, any two equilibria must generate identical aggregate quantities, which in 
turn forces the equilibria to coincide. 

\paragraph{Contributions.}
This paper makes the following contributions to the theory of congestion games with 
a continuum of players.
\begin{itemize}
    \item \textbf{A General Framework for Non-Potential Congestion Games:} We propose 
    and analyze a model that generalizes existing potential-based frameworks 
    \cite{crippa2009optimum} to settings with complex, heterogeneous interactions. We 
    provide a characterization of when the game admits a potential in this continuous 
    setting, complementing the discrete analysis of \cite{harks2011characterizing}.

    \item \textbf{A Noise-Contingent Representation of Equilibrium}: We introduce the SVNE, a stochastic variational formulation that provides a richer description of equilibrium behavior than the standard QRE \cite{mckelvey1995quantal} in the continuum setting.
    By characterizing each player's strategy as a function of both their characteristic and their noise realization, the SVNE explicitly describes noise-contingent individual choices, whereas the QRE retains only the resulting choice probabilities conditional on the player’s characteristic.
    We show that players choose pure strategies almost surely once their noise is realized.
    We then establish the equivalence between the SVNE and the continuum QRE in terms of conditional choice probabilities. Finally, we distinguish the SVNE from the Stochastic User Equilibrium, which is traditionally formulated for graph-based nonatomic routing games \cite{daganzo1977stochastic}.

    \item \textbf{Existence of Equilibrium via SVNE Regularization:} We provide a 
    rigorous proof of the existence of a pure Nash Equilibrium for a broad class of 
    non-potential games. The SVNE is shown to exist for any $\sigma > 0$ via the 
     Schauder–Tychonoff fixed-point theorem, and a deterministic NE is recovered in the 
    limit when $\sigma$ goes to $0$.

    \item \textbf{Uniqueness via a Monotonicity Argument:} We establish a 
    coordinate-wise monotonicity condition on the aggregate interaction costs that is 
    sufficient to guarantee uniqueness of equilibrium.
\end{itemize}

\paragraph{Structure of the paper.}
The rest of the paper is structured as follows. Section \ref{sec:assumptions_main_results} 
introduces the notions of equilibrium used in the paper, the standing assumptions and presents our main results. Section \ref{sec:mathematical_analysis} contains the core mathematical 
analysis: we investigate conditions for the game to be potential, establish the equivalence between the SVNE and the continuum QRE, 
and provide detailed proofs for existence and uniqueness. Finally, Section \ref{illustrative_ex} presents numerical simulations illustrating the model.

\subsection{A general congestion game}\label{congestion_game_def}

Let $X$ denote a bounded domain of $\R^d$, characterizing the set of players' characteristics. Let $\mu$ be the distribution of the players on $X$, with the assumption that $\mu \in L^\infty(X,\R_+) $.

Each individual faces a discrete choice among the set of resource $I:=\{1,\ldots,N\}$ and pays a cost accordingly.
A strategy profile is defined as a mapping $a:X\mapsto \mathbb{R}^N_+$ where $a_i(x)$ represents the proportion of players with $x$ choosing the resource $i$. The set of all strategy profiles is denoted by $\cA$ and defined as:
\begin{equation}\label{def_S}
    \cA :=\left\{a \in L^2(X,[0,1]^N)\,\vert\,\sum_{i\in I}a_i(x)= 1 \quad \text{for }\mu.a.e.\, x\in X\right\}.
\end{equation}
For a player with characteristic $x\in X$, the cost associated with the choice of resource $i \in I$ is:
\begin{equation}\label{eq:sep_cost}
    c_i(x) +h_i(a_i) ,
\end{equation}
where $c_i$ represents the idiosyncratic cost and $h_i$ accounts for the interaction effects (e.g., congestion or externalities).
The aim of this article is to establish existence and uniqueness of equilibrium, as well as providing stability results. 
Throughout this article, for any $a \in \cA$, we adopt the following notation:
    \begin{equation*}
        h(a):=\big(h_1(a_1),\ldots,h_N(a_N)\big).
    \end{equation*}

\begin{remark}\label{rem:first_properties_A}
    The set of strategy profiles $\mathcal A$ is convex, as the constraints
$a_i(x)\ge 0$ and $\sum_{i=1}^N a_i(x)=1$ are preserved pointwise under
convex combinations (the upper bound $a_i\le 1$ is implied by these two
constraints). Finally, $\mathcal A$ is closed and bounded in
$L^2(X,\mu;\mathbb R^N)$, hence weakly compact. 
\end{remark}

\subsection{Severals notions of equilibrium}\label{sec:notions_equilibrium}

To establish existence and uniqueness results for the game's equilibrium, we will use two notions of equilibrium: the Pure Nash Equilibrium (PNE), and the Stochastic Variational Nash Equilibrium (SVNE$_\sigma$) for a given regularization parameter $\sigma \ge 0$. 

We will begin by demonstrating the existence of an SVNE$_\sigma$ for $\sigma > 0$. By taking the limit as $\sigma \to 0$, we will then deduce the existence of a SVNE$_0$. Finally, we will show that under an additional assumption (related to a Voronoi-type partition), the existence of a SVNE$_0$ guarantees that of a PNE. This approach is also advantageous for the uniqueness analysis: if uniqueness is established for the SVNE$_0$, it is directly inherited by the PNE.

Here is the formal definitions of the different notions of equilibrium:
\begin{definition}[Pure Nash Equilibrium]\label{nash_eq}
We say that a partition $(A_1,...,A_N)\in \cP(X)^N$ is a Pure Nash Equilibrium (PNE) if for every $i\in\{1,\ldots,N\}$ it holds:
\begin{equation*}
    \displaystyle   A_i=    \displaystyle \left\{ x\in X \,\big\vert\,c_i(x)+ h_i(\ind_{A_i}) = \min_{k=1}^N c_k(x)+ h_k(\ind_{A_k})\right\}.
\end{equation*}
\end{definition}
\begin{definition}[Stochastic Variational Nash Equilibrium] \label{def:SVNE} Let $(\Omega,\mathcal{G}, \bP)$ be a probability space and $(\mathcal{E}_i)_{i=1,...,N}$ be i.i.d random variables from $\Omega$ into $\R^N$ of law $\nu$ . Let us define:
    \begin{displaymath}
        \mathcal F = \left\{f:\R^{N}\times X\rightarrow [0,1]^N\;\vert\;f\text{ is measurable and }\sum_{i\in I} f_i(\varepsilon,x) = 1\right\}.
    \end{displaymath} 
    Let $\sigma\geq0$, an element $f$ belonging to $\mathcal F$ is a Stochastic Variational Nash equilibrium (SVNE$_\sigma$), if for any $g\in \mathcal F$:
        \begin{equation}\label{eq:svar_nash_eq_def}
    \begin{array}{cc}
&    \displaystyle \sum_{i\in I}\int_X\int_{\R^N} (c_i(x)+h_i(a_i)-\sigma\varepsilon_i)f_i(\varepsilon,x)d\nu^N(\varepsilon)d\mu(x)\\
    & \displaystyle\quad\le \sum_{i\in I}\int_X \int_{\R^N}(c_i(x)+h_i(a_i)-\sigma \varepsilon_i)g_i(\varepsilon,x)d\nu^N(\varepsilon)d\mu(x),
\end{array}
\end{equation}
where $\nu^N(\varepsilon) = \prod_{i=1}^N\nu(\varepsilon_i)$ and for any $(x,i)\in X\times I$, $a_i(x)$ is the expectation of $f_i(\mathcal{E},x)$, i.e. 
\begin{equation}
    \label{def_a}
a_i(x) := \E[f_i(\mathcal{E},x)] = \int_{\R^N} f_i(\varepsilon,x)d\nu^N(\varepsilon).
\end{equation}
\end{definition}

Definition \ref{def:SVNE} is related to the notion of QRE. The link will be made clear by Corollary \ref{cor:SVNE_fixed_point}. Furthermore, Proposition \ref{prop:justification_SVNE} shows that an SVNE$_\sigma$ coincides with a Nash equilibrium of the non-atomic game defined on the extended characteristic space $\mathbb{R}^N \times X$, in the sense that almost every player selects a resource minimizing their noise-perturbed cost.

In the above formulation, players can play in mixed strategies. Indeed, at an equilibrium $f$, a player with characteristic $x$ facing the noise $\varepsilon$ choose $i$ with probability $f_i(\varepsilon,x)$. In Section \ref{subsec:prop_svne}, we are able to show that players play in pure strategies.
\begin{remark}
    In the above definition, the map $a$ defined in \eqref{def_a} belongs to $\cA$ defined in \eqref{def_S}.
\end{remark}

\subsection{Notations}

In what follows, given two measures $\mu$ and $\nu$, we denote their product measure by $\mu\otimes\nu$. If $\mu$ and $\nu$ are absolutely continuous with respect to the Lebesgue measures, we use, with a slight abuse of notation, the same symbols $\mu$ and $\nu$ for both the measures and their densities. Under this convention, the density of $\mu\otimes\nu$ is given by $x\mapsto \nu(x)\mu(x)$.
For any set $A$, we denote by $\ind_{A}$ the indicator function of $A$. Let $X$ be a Lebesgue measurable subset of $\mathbb{R}^n$. For any two measurable subsets $A,B\subseteq X$, we denote by $A\mathbin{\triangle}B := (A\setminus B)\cup(B\setminus A) $ their symmetric difference. 
Let $\cP(X)$ be the power set of $X$. A family of measurable sets $(A_1,\ldots,A_N)$ is a partition of $X$ up to $\mu$-null sets if \begin{displaymath} \mu\left(X\setminus\bigcup_{i=1}^N A_i\right)=0 \quad\text{and}\quad \mu(A_i\cap A_j)=0 \quad\text{for all }i\neq j. \end{displaymath}
We denote by $L^\infty(X, \mathbb{R}^m)$ the Banach space of measurable and essentially bounded functions from $X$ to $\mathbb{R}^m$. 
For $1 \leq p <\infty$, we denote by $L^p(X, \mu; \mathbb{R}^m)$ the space of equivalence classes of measurable functions $f: X \to \mathbb{R}^m$ such that the $p$-th power of their norm is integrable w.r.t. the measure $\mu$, i.e., $$ \int_X |f(x)|^p d\mu(x) < \infty, $$ where $|\cdot|$ is the standard Euclidean norm and the integral is with respect to the Lebesgue measure on $X$. In $L^2(X,\mu;\R^m)$, the inner product is
\begin{displaymath}
    \langle u,v\rangle_{L^2}
    :=
    \sum_{i=1}^m
    \int_X u_i(x)v_i(x)\,d\mu(x).
\end{displaymath}
We denote by $L^2(X, \mu)$ the space of equivalence classes of real valued measurable functions $f$ on $X$ that are square-integrable with respect to the measure $\mu$. 

If not said otherwise, the notion of derivative always refers to Fréchet derivative. In this case,
a function $f:X\rightarrow Y$, with $X$ and $Y$ Banach spaces, is said to be of class $C^1$ if it is differentiable and its first derivative, $Df$, is continuous.
Building on this, a function is of class $C^2$ if it is twice differentiable and its second derivative, $D^2f$, is continuous.  If $f: X \times Y \to \mathbb{R}^n$, with $n\geq 1$, is continuously differentiable, we denote by $D_y f$ its partial derivative (or Jacobian matrix) with respect to $y$. If $f$ is scalar-valued ($n=1$), we may also use $\nabla_y f$ to denote its partial gradient vector. Let $X$ be a subset of a Banach space, we call the domain of a map $f:X\rightarrow \R\cup \{+\infty\}$ the set $f^{-1}(\R)$.

\section{Standing assumptions and main results}\label{sec:assumptions_main_results}
This section is devoted to introduce the assumptions used in the rest of the paper, and the main results we aim to establish.

\begin{assumption}[On the transport costs] {\color{white}a}
\label{ass:transport_costs}

\begin{enumerate}
    \item For any $i\in I$, the map $c_i:X\rightarrow \R_+$ is continuous. \label{deriv_c}
    \item (No tie assumption) For any indeces $i,j$ such that $i\neq j$, and any constant $\beta$, the set $\{x\in X \vert c_i(x) - c_j(x)=\beta\}$ is $\mu$-negligible.\label{voronoi_assum}
\end{enumerate}
    
\end{assumption}

\begin{assumption}[On the noise distribution]\label{density_assum}
    The distribution $\nu$ has the density of a standard Gumbel distribution.
\end{assumption}

\begin{assumption}[On the interaction term]\label{ass:existence_interaction_term}
    The interaction term $h:\cA\subset L^2(X,\mu;\R^N)\rightarrow (\R\cup \{+\infty\})^N $
    \begin{enumerate}
        \item
            The domain of $h$ is non empty in $\cA$.
        \item \label{bound_h}
            For any $a\in \cA$ it holds
            \begin{equation*}
                0 \leq   \min_{i\in I}h_i(a_i)< +\infty.
            \end{equation*}
        \item \label{lsc_h}
            For the weak topology in $L^2(X,\mu;[0,1])$ and for any $i\in I$ 
            \begin{itemize}
                \item the map $h_i$ is lower-semicontinuous on $L^2(X,\mu;[0,1])$ ;
                \item $h_i(0)<+\infty$;
                \item the domain of $h_i$ is open in $L^2(X,\mu;[0,1])$ ;
                \item $h_i$ is continuous on its domain.
            \end{itemize}
    \end{enumerate}
\end{assumption}

\begin{remark}
  The requirement in Assumption \ref{ass:existence_interaction_term} that the domain of $h_i$ be open with respect to the weak topology of $L^2(X,\mu;\mathbb{R}^N)$ is rather restrictive. Assumption \ref{ass:uniqueness_interaction_term} provides a particular setting in which this requirement is satisfied.  
\end{remark}

\begin{assumption}[On the interaction term; uniqueness]\label{ass:uniqueness_interaction_term}
The interaction term $h:\cA\subset L^2(X,\mu;\R^N)\rightarrow (\R\cup \{+\infty\})^N $ takes the following form:
\begin{displaymath}
    h(a) =\big(\tilde h_1(\Psi  a_1),\ldots,\tilde h_N(\Psi  a_N)\big),
\end{displaymath}
where the linear map $\Psi:L^2(X,\mu;\R)\rightarrow \R^m$ is defined by:
\begin{equation}\label{def_lambda}
    \Psi f:=\left(\int_X \psi^1(x)f(x)d\mu(x),\ldots,\int_X \psi^m(x)f(x)d\mu(x)\right),
\end{equation}
and where $\tilde h:\R^m\rightarrow (\R\cup \{+\infty\})^N $ is such that:
\begin{enumerate}
    \item \label{pos_lambda}
    For any $\ell$ in $\{1,\ldots,m\}$, the map $\psi^\ell$ introduced in \eqref{def_lambda} is in $L^2(X,\mu;\mathbb{R}^+)$. 
    \item \label{cont_domain_h}
For any $i\in I$, the domain of $\tilde h_i$ is open in $\mathbb{R}^m$ and is continuous over it.
\item \label{decreasing}
For any $i\in I$, $\tilde h_i$ is coordinate-wise non decreasing over $\mathbb{R}^m$, i.e., for any $x,y\in \mathbb{R}^m$, if $x^\ell\leq y^\ell$ for any $\ell\in \{1,\ldots,m\}$, then $\tilde h_i(x)\leq \tilde h_i(y)$.
\end{enumerate}
    
\end{assumption}

\begin{remark}
Note that in Assumption \ref{ass:uniqueness_interaction_term}, we could make $\Psi$ depend on $i\in I$. However, for simplicity of notation, we have assumed that it is independent.
\end{remark}

The following theorems is one of the main results of the paper and states existence and uniqueness of the SVNE$_\sigma$ and PNE.
\begin{theorem}\label{main_result}
 Under Assumptions \ref{ass:transport_costs} and \ref{ass:existence_interaction_term}, there exists a PNE and, under Assumption \ref{density_assum}, a SVNE$_\sigma$ for any $\sigma \geq 0$.
 \smallskip
    \noindent
    If, furthermore, Assumption \ref{ass:uniqueness_interaction_term} holds, then the PNE is unique up to $\mu$-null sets and for any $\sigma\geq 0$ the SVNE$_\sigma$ is unique. 
\end{theorem}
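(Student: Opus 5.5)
The plan follows the strategy announced in Section~\ref{sec:notions_equilibrium}: first build SVNE$_\sigma$ for $\sigma>0$ by a fixed point, then let $\sigma\to0$ to get an SVNE$_0$, extract a PNE from it with the no-tie assumption, and finally prove uniqueness by a monotonicity argument on the finite-dimensional aggregates $\Psi a_i$.

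\textbf{Existence for $\sigma>0$.} First I reduce an SVNE$_\sigma$ to a pure best response. Since the constraint $\sum_i f_i=1$ is pointwise, minimizing the left side of \eqref{eq:svar_nash_eq_def} over $\mathcal F$ with $a$ frozen means choosing, for a.e.\ $(\varepsilon,x)$, an index $i$ minimizing $c_i(x)+h_i(a_i)-\sigma\varepsilon_i$. Under Assumption~\ref{density_assum} ties occur only on a $\nu^N$-null set, so $f$ is a.s.\ an indicator. Integrating over $\varepsilon$ with Gumbel shocks yields the logit formula
\begin{equation*}
a_i(x)=F_i(a)(x):=\frac{\exp\bigl(-(c_i(x)+h_i(a_i))/\sigma\bigr)}{\sum_{k}\exp\bigl(-(c_k(x)+h_k(a_k))/\sigma\bigr)},
\end{equation*}
with the convention $e^{-\infty}=0$. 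The denominator is positive because $\min_k h_k(a_k)<\infty$ by Assumption~\ref{ass:existence_interaction_term}(\ref{bound_h}). Thus SVNE$_\sigma$ correspond exactly to fixed points $a=F(a)$ in $\mathcal A$. I then apply Schauder--Tychonoff on $\mathcal A$, which is convex and weakly compact in $L^2$ by Remark~\ref{rem:first_properties_A}, and metrizable for the weak topology on bounded sets. It remains to show $F:\mathcal A\to\mathcal A$ is weakly sequentially continuous. If $a^n\rightharpoonup a$, each $h_i(a_i^n)$ is a scalar, and the task is to prove $h_i(a^n_i)\to h_i(a_i)$ in $[0,\infty]$. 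If $a_i$ lies in the (weakly open) domain, continuity on the domain gives this. Otherwise lower semicontinuity gives $h_i(a^n_i)\to+\infty$. In both cases the weights $e^{-h_i(a^n_i)/\sigma}$ converge, so $F(a^n)\to F(a)$ pointwise, boundedly, and hence strongly in $L^2$.

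\textbf{The limit $\sigma\to0$.} This is the step I expect to be the main obstacle. Take fixed points $a^\sigma$. By weak compactness, extract $a^\sigma\rightharpoonup\bar a$ and $h_i(a_i^\sigma)\to\bar h_i\in[0,\infty]$, the latter in the compact $[0,\infty]^N$. I need $\bar h_i=h_i(\bar a_i)$. Lower semicontinuity gives only $\bar h_i\ge h_i(\bar a_i)$; the other inequality must come from continuity on the open domain. The domain issue is handled as follows: if $\bar a_i$ were outside the domain, then $h_i(a_i^\sigma)\to\infty$, which forces $a^\sigma_i\to0$, so $\bar a_i=0$. But $h_i(0)<\infty$ puts $0$ in the domain, a contradiction. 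So $\bar a_i$ is in the domain and continuity applies. With finite $\bar h$, the logit weights concentrate: for a.e.\ $x$, the no-tie assumption (Assumption~\ref{ass:transport_costs}(\ref{voronoi_assum}), with $\beta=\bar h_j-\bar h_i$) gives a strict unique minimizer of $c_i(x)+\bar h_i$. Hence $a^\sigma\to\ind_{A_i}$ a.e., where $A_i$ is the corresponding Voronoi-type cell. This gives $\bar a=(\ind_{A_i})_i$ and $\bar h_i=h_i(\ind_{A_i})$, so $(A_i)$ is a PNE, and $f(\varepsilon,x)=\ind_{A_i}(x)$ is an SVNE$_0$. A technical point remains for indices whose $h_i(a_i^\sigma)\to\infty$ along the sequence. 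Such an $i$ has $\bar a_i=0$ and hence $A_i$ empty; since $h_i(0)<\infty$, that limit would be finite, a contradiction. So this case does not occur.

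\textbf{Uniqueness.} For $\sigma>0$, let $a,b$ be two fixed points, and set $u_i=\Psi a_i$, $v_i=\Psi b_i$. Write $\Delta_i=\tilde h_i(u_i)-\tilde h_i(v_i)$. Then $\sum_i\int (a_i-b_i)(c_i+h_i(a_i))\,d\mu\le 0$ and symmetrically, and adding the two gives $\sum_i\Delta_i\int(a_i-b_i)\,d\mu\le0$. This is not directly a coordinatewise monotonicity, so I argue pointwise instead. Let $J=\{i:\Delta_i>0\}$. For $i\in J$, the logit formula with the common normalizer shows $a_i(x)/b_i(x)\le \max_k$ of the ratios, and more precisely $a_i/b_i<a_k/b_k$ whenever $\Delta_i>\Delta_k$. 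Take $i^*$ with the largest $\Delta$. If $\Delta_{i^*}>0$, then $a_{i^*}(x)<b_{i^*}(x)$ for every $x$: its relative weight strictly decreases while the weights sum to one. Since $\psi^\ell\ge0$, this gives $u_{i^*}\le v_{i^*}$ coordinatewise, and monotonicity yields $\Delta_{i^*}\le0$, a contradiction. Symmetrically the smallest $\Delta$ is $\ge0$, so every $\Delta_i=0$, hence $h(a)=h(b)$ and $a=F(a)=F(b)=b$. The same extremal-index argument with $a_i=\ind_{A_i}$ shows that the cell with the largest cost increase shrinks, giving the same contradiction, so $h$ agrees and the cells coincide up to the null tie sets. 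This yields uniqueness of the PNE and of the SVNE$_0$.
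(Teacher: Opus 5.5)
Your proposal is correct and follows essentially the paper's route: a logit fixed point via Schauder--Tychonoff for $\sigma>0$, a zero-noise limit using the open domain, $h_i(0)<+\infty$ and the no-tie assumption, and uniqueness via the same extremal-index monotonicity argument as Lemma~\ref{lem:uniqueness_ag} (the paper runs it uniformly in $\sigma\ge0$ through the a.e.\ inclusion $\{f^2_p>0\}\subset\{f^1_p=1\}$ rather than through logit ratios). Two small points need tightening: deducing $a_i^\sigma\to0$ from $h_i(a_i^\sigma)\to+\infty$ requires an index whose cost stays bounded, which you can get from Lemma~\ref{upbound_min_h} or from Assumption~\ref{ass:existence_interaction_term} applied at $\bar a$; and for $\sigma>0$ you only get $a_{i^*}\le b_{i^*}$, not strict inequality (equality holds when all $\Delta_k$ coincide), but that non-strict inequality is all your argument actually uses.
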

\begin{remark}
The following counter-example illustrates that the coordinate-wise non decreasing assumption of $\tilde{h}$ in Assumption \ref{ass:uniqueness_interaction_term} is crucial to get uniqueness of the PNE. 
Assume $I=\{1,2\}$, $X=[0,1]$, $\mu$ is equal to the Lebesgue measure, $c_1(x):= x$, $c_2(x):=1-x$, $\tilde{h}_1(y)=\tilde{h}_2(y)=- y$ and $\Psi f:=\int_Xf(x)d\mu(x)$. Then for any $s\in [0,1)$, the partition $([0,s],(s,1])$ is a PNE.

\end{remark}

For every $\sigma\geq 0$, let $a^\sigma=(a_1^\sigma,\ldots,a_N^\sigma)$ denote the vector associated with the $\mathrm{SVNE}_\sigma$ introduced in Definition~\ref{def:SVNE}. We define the induced allocation $(B_1^\sigma,\ldots,B_N^\sigma)\in\mathcal{P}^N$ by \begin{equation}\label{def_B_sigma} B_i^\sigma = \left\{ x\in X : c_i(x)+h_i(a_i^\sigma) = \min_{1\leq k\leq N} \bigl(c_k(x)+h_k(a_k^\sigma)\bigr) \right\}, \qquad i\in\{1,\ldots,N\}. \end{equation}
Lemma \ref{lem:intersection_mesure_zero} in Section \ref{subsec:prop_svne} ensures that up to a $\mu$-null set,   $(B_1^\sigma,\ldots,B_N^\sigma)$ is a partition of $X$.
The following theorem establishes the convergence of the allocation induced by the $\mathrm{SVNE}_\sigma$ to the PNE as $\sigma$ tends to zero.

\begin{theorem}\label{thm:conv} Under Assumptions~\ref{ass:transport_costs}-- \ref{ass:uniqueness_interaction_term}, for every $i\in\{1,\ldots,N\}$,
\begin{equation*}
    \mu(B_i^\sigma\triangle A_i)\longrightarrow 0 \qquad\text{as }\sigma\to0,
\end{equation*}
where $(A_1,\ldots,A_N)$ is a PNE. 
\end{theorem}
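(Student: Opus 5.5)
The plan is a compactness--uniqueness argument. We work with the aggregate quantities $y^\sigma_i := \Psi a^\sigma_i \in \R^m$ and exploit two facts. First, the PNE is unique (Theorem~\ref{main_result}). Second, under Assumption~\ref{ass:uniqueness_interaction_term} the allocation $B^\sigma$ depends on $a^\sigma$ only through the finite-dimensional vector $h(a^\sigma)=(\tilde h_i(y_i^\sigma))_i$.

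\textbf{Step 1 (compactness).} Take an arbitrary sequence $\sigma_n\to 0$. Since $\cA$ is weakly compact (Remark~\ref{rem:first_properties_A}), we extract a subsequence with $a^{\sigma_n}\rightharpoonup \bar a$ weakly in $L^2$. Because $\Psi$ is a finite-rank operator with kernels $\psi^\ell\in L^2$, this gives $y^{\sigma_n}_i\to \bar y_i:=\Psi\bar a_i$ in $\R^m$.

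\textbf{Step 2 (identification of the limit).} We show that $\bar a=\ind_{A}$ for the PNE $(A_i)$. By the fixed-point/logit characterization (Corollary~\ref{cor:SVNE_fixed_point}),
\[
a^{\sigma}_i(x)=\frac{\exp\bigl(-(c_i(x)+h_i(a^\sigma_i))/\sigma\bigr)}{\sum_k \exp\bigl(-(c_k(x)+h_k(a^\sigma_k))/\sigma\bigr)}.
\]
Suppose first that $\tilde h_i(\bar y_i)<\infty$ for all $i$ that matter. By continuity of $\tilde h_i$ on its open domain, $h_i(a^{\sigma_n}_i)\to \tilde h_i(\bar y_i)$. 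Then, for $\mu$-a.e. $x$ off the tie set, which is $\mu$-null by Assumption~\ref{ass:transport_costs}(\ref{voronoi_assum}), $a^{\sigma_n}_i(x)\to \ind_{\bar B_i}(x)$, where $\bar B_i$ is the argmin set built from $\tilde h(\bar y)$. Dominated convergence upgrades this to strong $L^2$ convergence, so $\bar a=\ind_{\bar B}$, and $\bar y_i=\Psi\ind_{\bar B_i}$. Hence $\bar B$ satisfies Definition~\ref{nash_eq}, i.e.\ it is a PNE, and by uniqueness $\bar B=A$ up to null sets.

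The indices with infinite cost need separate care. If $\tilde h_i(\bar y_i)=+\infty$, then for large $n$ the resource $i$ has huge cost, so $a^{\sigma_n}_i\to 0$ on $X$. Assumption~\ref{ass:existence_interaction_term}(\ref{bound_h}) guarantees that some resource has finite cost, so these indices simply drop out. One must also check that $\bar y_i=\Psi 0=0$ lies in the domain, since $h_i(0)<\infty$, which gives a contradiction. So every index actually has finite limit cost. I expect this bookkeeping with possibly infinite $h_i$ to be the main technical obstacle: one needs lower semicontinuity together with openness of the domain to prevent the limit from escaping the domain.

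\textbf{Step 3 (convergence of $B^\sigma$).} Since $h(a^{\sigma_n})\to h(\ind_A)=:\bar h$ in $\R^N$, we have, for each $i$,
\[
B_i^{\sigma_n}\triangle A_i\subset \bigcup_{j\neq i}\bigl\{x : |c_i(x)-c_j(x)-(\bar h_j-\bar h_i)|\le 2\delta_n\bigr\},
\]
where $\delta_n:=\max_k|h_k(a_k^{\sigma_n})-\bar h_k|\to 0$. These sets decrease to the tie sets $\{c_i-c_j=\bar h_j-\bar h_i\}$, which are $\mu$-null by the no-tie assumption. Since $\mu$ is finite ($X$ bounded, $\mu\in L^\infty$), continuity from above of the measure gives $\mu(B_i^{\sigma_n}\triangle A_i)\to 0$. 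Every sequence $\sigma_n\to 0$ has a subsequence along which this holds, with the same limit $A$, so the full family converges as $\sigma\to 0$.
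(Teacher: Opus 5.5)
Your proof is correct and follows essentially the paper's route. Weak compactness, together with an identification of the limit in the spirit of Lemma~\ref{limit_point}, shows that the limit is the unique PNE, which gives $h(a^{\sigma})\to h(\ind_{A})$. The no-tie assumption then yields $\mu(B_i^\sigma\triangle A_i)\to 0$; your inclusion into thin neighbourhoods of the tie sets combined with continuity from above is equivalent to the paper's pointwise a.e.\ convergence plus dominated convergence, and you only need to replace $\delta_n$ by $\sup_{m\ge n}\delta_m$ so that those sets are genuinely decreasing.
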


\section{Mathematical analysis}\label{sec:mathematical_analysis}

This section is organized into four parts. First, Section \ref{subsec:non_potential_game} presents several results demonstrating that the congestion game defined in Section \ref{congestion_game_def} lacks a potential structure. This absence of a potential motivates our alternative framework, which is centered on the $\text{SVNE}_\sigma$. Second, Section \ref{subsec:prop_svne} details the fundamental properties of this equilibrium concept, including its formal connection to the QRE. Third, in Section \ref{equilibria_section}, we establish the existence of equilibria, rather than employing a classical optimization approach, our proof relies on a fixed-point approach. Finally, Section \ref{uniqueness_section} provides a uniqueness result
based on the monotone structure of the interaction cost.

\subsection{A non-potential congestion game}\label{subsec:non_potential_game}

In \cite{crippa2009optimum}, the authors established the existence and
uniqueness of a Nash equilibrium when the congestion only depends on the
mass of players, i.e.\ when for any $i$ the costs are
\begin{equation}\label{eq:h_depends_on_mass}
    h_i(a_i) = \tilde h_i\left(\int_X a_i(x)\,d\mu(x)\right)
\end{equation}
for a function $\tilde h_i:\R\rightarrow\R$. Their approach is based on the
fact that such a game admits a potential. In this paragraph we determine exactly how far this
variational structure extends. In Theorem \ref{thm:path_characterization} below, we characterize the existence of a potential
by a \emph{path-invariance} property of the cost operator, which requires no
differentiability whatsoever. In Proposition~\ref{prop:potential_condition}, we recover a differential symmetry test
as its smooth specialization. The characterization then
yields a sharp negative result: as soon as players contribute unequally to
congestion, the game admits no potential
(Proposition~\ref{prop:aggregate}). In words, the game is a potential
game if and only if players contribute equally to congestion. For the
sake of generality, we do not assume in this section the separability of
the costs as in \eqref{eq:sep_cost}. Accordingly, the cost for a player
with characteristic $x\in X$ choosing a resource $i\in I$ is
$h_i(x,a_i)$, where $h_i$ is defined on
$X\times L^2(X,\mu;\mathbb{R}_+)$ and is real-valued throughout this
section.

\bigskip
We first introduce the geometry used on the feasible set $\mathcal A$.
We consider the affine space
\begin{displaymath}
    \mathcal H :=
    \left\{
        a=(a_1,\ldots,a_N)\in L^2(X,\mu;\mathbb R^N)
        \;\middle|\;
        \sum_{i=1}^N a_i=1 \quad\mu\text{-a.e.}
    \right\},
\end{displaymath}
and its associated linear vector space
\begin{displaymath}
    \mathcal H_0 :=
    \left\{
        b=(b_1,\ldots,b_N)\in L^2(X,\mu;\mathbb R^N)
        \;\middle|\;
        \sum_{i=1}^N b_i=0 \quad\mu\text{-a.e.}
    \right\}.
\end{displaymath}

\begin{remark}[Structural properties of $\mathcal A$]
\label{rem:structure_A}
Note that $\mathcal A\subset\mathcal H$ and, for all
$a,\widetilde a\in\mathcal A$, the increment $\widetilde a-a$ belongs to
$\mathcal H_0$. 
\end{remark}

For $a\in\mathcal A$, we define the cone of feasible directions at $a$ by
\begin{displaymath}
    \mathcal T_{\mathcal A}(a) :=
    \left\{
        b\in\mathcal H_0
        \;\middle|\;
        \text{there exists }\varepsilon>0
        \text{ such that } a+tb\in\mathcal A
        \text{ for every }t\in[0,\varepsilon]
    \right\}.
\end{displaymath}
Following~\cite[Sect.~2.2]{BonnansShapiro2000}, we use the one-sided
directional derivative along feasible directions of the constraint set.

\begin{definition}[Feasible directional derivative]
Let $E$ be a Banach space and let $F:\mathcal A\to E$. For $a\in\mathcal A$
and $b\in\mathcal T_{\mathcal A}(a)$, the feasible directional derivative of
$F$ at $a$ in the direction $b$ is defined by
\begin{displaymath}
    D_{\mathcal A}^{+}F(a)[b]
    :=
    \lim_{t\downarrow0}
    \frac{F(a+tb)-F(a)}{t},
\end{displaymath}
whenever this limit exists in $E$.
\end{definition}

Throughout this section, we assume that the total cost induces a
well-defined operator
\begin{displaymath}
\begin{aligned}
    \boldsymbol h:\mathcal A
    &\longrightarrow L^2(X,\mu;\mathbb R^N),\\
    a
    &\longmapsto h(\cdot,a)
    :=
    \bigl(h_1(\cdot,a_1),\ldots,h_N(\cdot,a_N)\bigr).
\end{aligned}
\end{displaymath}

\begin{definition}[Potential game]
\label{def:potential_games}
The game admits a potential if there exists a functional
$\mathcal G:\mathcal A\to\mathbb R$
such that, for every $a\in\mathcal A$ and every
$b\in\mathcal T_{\mathcal A}(a)$, the feasible directional derivative
$D_{\mathcal A}^{+}\mathcal G(a)[b]$ exists and satisfies
\begin{displaymath}
    D_{\mathcal A}^{+}\mathcal G(a)[b]
    =
    \left\langle \boldsymbol h(a),b\right\rangle_{L^2}
    =
    \sum_{i=1}^N
    \int_X h_i(x,a_i)\,b_i(x)\,d\mu(x).
\end{displaymath}
\end{definition}

\begin{remark}
\label{rem:abs_potential_using_dom}
    Definition \ref{def:potential_games}, the subsequent results, and their proofs can be adapted verbatim to the case where the total cost diverges. In this case, $\cA$ should be replaced by $\mathcal D_{\boldsymbol h}$, the domain of $\boldsymbol h$, which should be assumed to be convex.
\end{remark}

\subsubsection{A path-invariance characterization}
\label{subsubsec:path_characterization}

The advantage of the notion of SVNE$_\sigma$ when $\sigma=0$ is that, if
the game admits a potential as defined in Defition \ref{def:potential_games}, then \eqref{eq:svar_nash_eq_def} is the
first-order optimality condition of the optimisation problem:
$\min_{a\in\cA}\cG(a)$. We now determine exactly when such a $\cG$
exists. The characterization requires only the following radial continuity
property borrowed from the theory of monotone operators, see e.g.\
\cite[p.~291]{Galewski2021}:

\begin{assumption}[Radial continuity]\label{ass:radial_continuity}
    The operator $\boldsymbol h : \mathcal A \longrightarrow L^2(X,\mu;\mathbb R^N)$ is radially continuous is the following sense:
    
    \begin{equation}
    \forall \; a,a'\in\mathcal A, \quad t\mapsto
    \bigl\langle
        \boldsymbol h\bigl(a+t(a'-a)\bigr),\,a'-a
    \bigr\rangle_{L^2}
    \ \text{ is continuous on } [0,1].
\end{equation}
\end{assumption}

By convexity of $\mathcal A$ (Remark \ref{rem:first_properties_A}) the segment
$[a,a']$ is contained in $\mathcal A$, so Assumption \ref{ass:radial_continuity} is
well posed. 

Under Assumption \ref{ass:radial_continuity}, for any $a,a'\in\mathcal A$ the
line integral of $\boldsymbol h$ along the segment from $a$ to $a'$,
\begin{equation}\label{eq:line_integral}
    \mathcal I(a\to a')
    :=
    \int_0^1
    \bigl\langle
        \boldsymbol h\bigl(a+t(a'-a)\bigr),\,a'-a
    \bigr\rangle_{L^2}\,dt,
\end{equation}
is well defined since the integrand is continuous on $[0,1]$. The change of
variables $t\mapsto 1-t$ yields the orientation antisymmetry
\begin{equation}\label{eq:antisymmetry_I}
    \mathcal I(a'\to a) = -\,\mathcal I(a\to a'),
    \qquad \forall a,a'\in\mathcal A .
\end{equation}
Given points $a^0,\ldots,a^K\in\mathcal A$ with $a^K=a^0$, the
circulation of $\boldsymbol h$ along the closed polygonal path
$(a^0,\ldots,a^K)$ is $\sum_{k=0}^{K-1}\mathcal I(a^k\to a^{k+1})$.

\begin{theorem}[Path-invariance characterization of the potential]
\label{thm:path_characterization}
Let Assumption \ref{ass:radial_continuity} holds. The following statements are
equivalent.
\begin{enumerate}
    \item\label{item:pot} The game admits a potential in the sense of
    Definition~\ref{def:potential_games}.
    \item\label{item:tri} The circulation of $\boldsymbol h$ vanishes
    along every closed triangular path, i.e.\ for all
    $a^0,a^1,a^2\in\mathcal A$,
    \begin{equation}\label{eq:zero_circulation}
        \mathcal I(a^0\to a^1)
        +\mathcal I(a^1\to a^2)
        +\mathcal I(a^2\to a^0)
        =0 .
    \end{equation}
    \item\label{item:lineint} For every
    $\bar a\in\mathcal A$, the map
    \begin{equation}\label{eq:potential_formula}
        \mathcal G(a) := \mathcal I(\bar a\to a),
        \qquad a\in\mathcal A,
    \end{equation}
    is a potential.
\end{enumerate}
In this case the circulation of $\boldsymbol h$ vanishes along every
closed polygonal path contained in $\mathcal A$, any potential
$\widetilde{\mathcal G}$ satisfies
$\widetilde{\mathcal G}(a')-\widetilde{\mathcal G}(a)=\mathcal I(a\to a')$
for all $a,a'\in\mathcal A$, and the potential is unique up to an additive
constant.
\end{theorem}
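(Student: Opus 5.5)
I will prove the cycle of implications (1)$\Rightarrow$(2)$\Rightarrow$(3)$\Rightarrow$(1), and then derive the final assertions (vanishing circulation on closed polygons, the formula for the increments, and uniqueness up to a constant) from them.

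\textbf{(1)$\Rightarrow$(2).} Let $\mathcal G$ be a potential and fix $a,a'\in\mathcal A$. Consider $\phi(t):=\mathcal G(a+t(a'-a))$ on $[0,1]$. For $t\in[0,1)$ the direction $b=a'-a$ is feasible at $a_t:=a+t(a'-a)$, because $a_t+s b$ stays on the segment for small $s\ge 0$. Hence $\phi$ has a right derivative
\[
\phi'_+(t)=D^+_{\mathcal A}\mathcal G(a_t)[a'-a]=\langle \boldsymbol h(a_t),a'-a\rangle_{L^2},
\]
and this is continuous on $[0,1]$ by Assumption~\ref{ass:radial_continuity}. I also need $\phi$ to be continuous. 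For $t\in(0,1]$, the direction $-(a'-a)$ is feasible at $a_t$, so the left derivative exists and equals $-\langle \boldsymbol h(a_t),-(a'-a)\rangle=\phi'_+(t)$. Thus $\phi$ is differentiable on $(0,1)$ with continuous derivative, one-sidedly differentiable at the endpoints, and therefore continuous on $[0,1]$. The fundamental theorem of calculus then gives $\mathcal G(a')-\mathcal G(a)=\mathcal I(a\to a')$. Summing this identity around a triangle telescopes to zero, which is \eqref{eq:zero_circulation}.

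\textbf{(2)$\Rightarrow$(3).} This is the main step. Fix $\bar a$ and set $\mathcal G(a)=\mathcal I(\bar a\to a)$. Take $a\in\mathcal A$ and $b\in\mathcal T_{\mathcal A}(a)$. For small $t>0$ the point $a+tb$ lies in $\mathcal A$, and zero circulation on the triangle $(\bar a,a,a+tb)$, combined with \eqref{eq:antisymmetry_I}, gives
\[
\mathcal G(a+tb)-\mathcal G(a)=\mathcal I(a\to a+tb)=\int_0^1\langle \boldsymbol h(a+stb),tb\rangle\,ds=\int_0^t\langle \boldsymbol h(a+rb),b\rangle\,dr .
\]
By radial continuity applied to the segment $[a,a+\varepsilon b]$, after rescaling the parameter, the integrand is continuous in $r$. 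Dividing by $t$ and letting $t\downarrow0$ therefore yields $\langle \boldsymbol h(a),b\rangle$. The delicate point here is making sure the radial continuity hypothesis, which is stated for pairs of points of $\mathcal A$, covers the shorter segment $[a,a+\varepsilon b]$. It does, because both endpoints $a$ and $a+\varepsilon b$ belong to $\mathcal A$.

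\textbf{(3)$\Rightarrow$(1)} is immediate, since (3) exhibits a potential.

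\textbf{Final assertions.} Take any potential $\widetilde{\mathcal G}$. The argument in (1)$\Rightarrow$(2) shows $\widetilde{\mathcal G}(a')-\widetilde{\mathcal G}(a)=\mathcal I(a\to a')$. Consequently, the circulation along any closed polygon $a^0,\ldots,a^K=a^0$ telescopes to $\widetilde{\mathcal G}(a^K)-\widetilde{\mathcal G}(a^0)=0$. Moreover, any two potentials have the same increments between every pair of points, so they differ by a constant.
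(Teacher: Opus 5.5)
Your proposal is correct and follows the paper's proof in Appendix~A essentially step for step. Both prove the same cycle of implications, and for (1)$\Rightarrow$(2) both use the two-sided derivative of $t\mapsto\mathcal G(a+t(a'-a))$ together with the fundamental theorem of calculus to obtain the increment formula, which also yields the final assertions. For (2)$\Rightarrow$(3), both apply zero circulation to the triangle $(\bar a,a,a+tb)$ and then radial continuity on the pair $(a,a+\varepsilon b)$.
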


\begin{proof}
The proof adapts the classical characterization of potential operators
by path-independence of the curvilinear integral
\cite[\S2.6 and \S6]{Vainberg1964} and is postponed to
Appendix~\ref{app:proof_thm_path}.
\end{proof}

\subsubsection{The differential symmetry test}
\label{subsubsec:symmetry_test}

Verifying condition \eqref{eq:zero_circulation} directly may be
cumbersome. Under smoothness, it reduces to a symmetry property of the
derivative, which is the test we shall actually use in examples. We first
recall the notion of differentiability relative to $\mathcal A$.

\begin{definition}[$C^1$ regularity relative to $\mathcal A$]
\label{def:C1-relative}
Let $E$ be a Banach space and let $F:\mathcal A\to E$. We say that $F$ is
of class $C^1$ \emph{relative to $\mathcal A$} if there exists a map
\begin{displaymath}
    D_{\mathcal A}F:\mathcal A\longrightarrow\mathcal L(\mathcal H_0,E),
\end{displaymath}
continuous when $\mathcal A$ is equipped with the topology induced by
$L^2(X,\mu;\mathbb R^N)$ and $\mathcal L(\mathcal H_0,E)$ with the
operator norm, such that for every $a\in\mathcal A$,
\begin{equation}\label{eq:C1-limit}
    \lim_{\substack{\widetilde a\to a \text{ in } L^2\\
                    \widetilde a\in\mathcal A,\;\widetilde a\neq a}}
    \frac{\bigl\|F(\widetilde a)-F(a)
          -D_{\mathcal A}F(a)\,[\widetilde a-a]\bigr\|_{E}}
         {\|\widetilde a-a\|_{L^2}}
    \;=\;0.
\end{equation}
By Remark~\ref{rem:structure_A}, $\widetilde a-a\in\mathcal H_0$ for all
$a,\widetilde a\in\mathcal A$, so the expression above is well defined.
\end{definition}

Note that
\eqref{eq:C1-limit} implies continuity of $F$ on $\mathcal A$ for the
strong topology, so that relative $C^1$ regularity of $\boldsymbol h$
implies the radial continuity given by Assumption \ref{ass:radial_continuity}. Two directions $b,b'\in\mathcal T_{\mathcal A}(a)$ are said to be
jointly feasible at $a$ if there exists $\varepsilon>0$ such that
$a+sb+tb'\in\mathcal A$ for every $s,t\in[0,\varepsilon]$.

\begin{proposition}[Symmetry characterization under relative $C^1$
regularity]
\label{prop:potential_condition}
Assume that the total cost induces an operator
$\boldsymbol h:\mathcal A\to L^2(X,\mu;\mathbb R^N)$ of class $C^1$
relative to $\mathcal A$. Then the game admits a potential if and only if
for every $a\in\mathcal A$ and every pair of jointly feasible directions
$b,b'\in\mathcal T_{\mathcal A}(a)$,
\begin{equation}
\label{eq:symmetry_condition}
    \left\langle
        D_{\mathcal A}\boldsymbol h(a)[b],b'
    \right\rangle_{L^2}
    =
    \left\langle
        D_{\mathcal A}\boldsymbol h(a)[b'],b
    \right\rangle_{L^2}.
\end{equation}
\end{proposition}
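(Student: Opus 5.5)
We reduce the statement to Theorem~\ref{thm:path_characterization}. Relative $C^1$ regularity of $\boldsymbol h$ implies radial continuity (Assumption~\ref{ass:radial_continuity}), so the theorem applies, and the game admits a potential if and only if the circulation \eqref{eq:zero_circulation} vanishes on every triangle in $\mathcal A$. It therefore suffices to show that the symmetry condition \eqref{eq:symmetry_condition} is equivalent to vanishing triangular circulation.

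\emph{Necessity.} Assume a potential exists. By Theorem~\ref{thm:path_characterization}, $\mathcal G(a)=\mathcal I(\bar a\to a)$ is a potential, and $\mathcal G(a')-\mathcal G(a)=\mathcal I(a\to a')$. Fix $a$ and jointly feasible directions $b,b'$, and take $s,t\in[0,\varepsilon]$ small. Vanishing circulation on the rectangle $a$, $a+sb$, $a+sb+tb'$, $a+tb'$ (split into two triangles) gives
\begin{displaymath}
\mathcal I(a\to a+sb)+\mathcal I(a+sb\to a+sb+tb')=\mathcal I(a\to a+tb')+\mathcal I(a+tb'\to a+sb+tb').
\end{displaymath}
Rearranging, we get
\begin{displaymath}
\int_0^1\bigl\langle \boldsymbol h(a+sb+\tau tb')-\boldsymbol h(a+\tau tb'),\,tb'\bigr\rangle\,d\tau
=\int_0^1\bigl\langle \boldsymbol h(a+tb'+\tau sb)-\boldsymbol h(a+\tau sb),\,sb\bigr\rangle\,d\tau .
\end{displaymath}
Apply the relative $C^1$ expansion \eqref{eq:C1-limit}, together with continuity of $D_{\mathcal A}\boldsymbol h$, to each difference. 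The left side equals $st\langle D_{\mathcal A}\boldsymbol h(a)[b],b'\rangle+o(st)$ and the right side equals $st\langle D_{\mathcal A}\boldsymbol h(a)[b'],b\rangle+o(st)$. Choosing $s=t\downarrow 0$, dividing by $st$ and passing to the limit yields \eqref{eq:symmetry_condition}. The point to check is that the $o(\cdot)$ remainders are uniform in $\tau$. This follows from the mean-value form $\boldsymbol h(c+sb)-\boldsymbol h(c)=\int_0^1 D_{\mathcal A}\boldsymbol h(c+rsb)[sb]\,dr$, which is valid along segments inside $\mathcal A$ because $\tau\mapsto\boldsymbol h(c+\tau sb)$ is $C^1$ with the indicated derivative. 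Continuity of $D_{\mathcal A}\boldsymbol h$ at $a$ then controls everything.

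\emph{Sufficiency.} Take $a^0,a^1,a^2\in\mathcal A$ and parametrize the filled triangle by $\phi(s,t)=a^0+s(a^1-a^0)+t(a^2-a^0)$ on the simplex $\{s,t\ge0,\ s+t\le1\}$. Set $u=a^1-a^0$ and $v=a^2-a^0$. Write the circulation as a line integral of the $1$-form $\omega=\langle\boldsymbol h(\phi),u\rangle\,ds+\langle\boldsymbol h(\phi),v\rangle\,dt$ around the boundary of the simplex. The coefficients are $C^1$ in $(s,t)$ by the chain rule for the relative derivative, since all points lie in $\mathcal A$ by convexity. Green's theorem then turns the circulation into
\begin{displaymath}
\iint\bigl(\langle D_{\mathcal A}\boldsymbol h(\phi)[u],v\rangle-\langle D_{\mathcal A}\boldsymbol h(\phi)[v],u\rangle\bigr)\,ds\,dt .
\end{displaymath}
It remains to show that the integrand vanishes. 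At an interior point $\phi(s,t)$ of the simplex, the directions $\pm u$ and $\pm v$ are jointly feasible: for small parameters the points stay in the simplex image, which lies in $\mathcal A$. So \eqref{eq:symmetry_condition} holds there for $u,v$, and the integrand vanishes on the interior, which is all Green's theorem needs. The main obstacle is justifying Green's theorem on the closed simplex. Since the coefficients are $C^1$ on the closed set, we instead apply it on shrunken simplices and pass to the limit using continuity. Theorem~\ref{thm:path_characterization} then concludes.
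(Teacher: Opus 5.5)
Your proof is correct, but it takes a different route from the paper in both directions.

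\textbf{Necessity.} The paper does not pass through Theorem~\ref{thm:path_characterization}. It sets $\varphi(s,t)=\mathcal G(a+sb+tb')$ and reads off $D_s\varphi=\langle\boldsymbol h(a_{s,t}),b\rangle_{L^2}$ and $D_t\varphi=\langle\boldsymbol h(a_{s,t}),b'\rangle_{L^2}$ at interior points of $[0,\varepsilon]^2$ from the potential property in the directions $\pm b,\pm b'$. It then applies Schwarz's theorem there and lets $(s,t)\to(0,0)$ using continuity of $D_{\mathcal A}\boldsymbol h$. Your parallelogram-circulation expansion is in effect an integrated form of Schwarz's lemma. The integral mean-value formula supplies the uniformity, and the argument lands directly at the corner $a$ without the interior-then-limit step.

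\textbf{Sufficiency.} The paper again bypasses the triangle criterion and proves the potential property directly. It fixes $\bar a$ and writes $\mathcal G(a+sb)=\int_0^1\langle\boldsymbol h(c_t+tsb),\,b'+sb\rangle_{L^2}\,dt$, with $b'=a-\bar a$ and $c_t=\bar a+tb'$. It then differentiates under the integral sign at $s=0^+$. The symmetry is used only at the chord points $c_t$, where $b,b'$ are jointly feasible, to recognize the integrand as $\tfrac{d}{dt}[t\,\varphi(t)]$ with $\varphi(t)=\langle\boldsymbol h(c_t),b\rangle_{L^2}$. This is the homotopy proof of the Poincar\'e lemma.

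Your Green's theorem argument is the Stokes-type proof of the same fact. It is more modular: Theorem~\ref{thm:path_characterization} does the work of building the potential, and the symmetry condition appears transparently as the vanishing curl of the pulled-back $1$-form. The cost is a two-dimensional integration theorem plus the shrinking-simplex approximation. The paper's route stays one-dimensional and self-contained.

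One wording fix: the coefficients are $C^1$ only on the open simplex. Their partial derivatives extend continuously to the closed simplex, but $\phi$ may leave $\mathcal A$ outside it, so there is no two-sided differentiability on the boundary. That is exactly why the shrinking step is needed. Your sentence ``since the coefficients are $C^1$ on the closed set'' therefore has the logic reversed.
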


\begin{proof}
The proof adapts Vainberg's symmetry criterion for potential operators
\cite[Thm~5.1]{Vainberg1964} to the convex set $\mathcal A$, and is
postponed to Appendix~\ref{app:proof_prop_symmetry}.
\end{proof}

\paragraph{Examples of costs satisfying the symmetry condition.}
A typical example for condition \eqref{eq:symmetry_condition} to hold is
\begin{displaymath}
    h_i(x,a_i)= c_i(x)+\int_X K_i(x,y)a_i(y)\,d\mu(y),\quad\forall i\in I,
\end{displaymath}
with $K_i\in L^2(X\times X,\mu\otimes\mu)$ symmetric and $c_i\in
L^2(X,\mu)$: the derivative
$D_{\mathcal A}\boldsymbol h(a)[b]=\bigl(\int_X
K_i(\cdot,y)b_i(y)d\mu(y)\bigr)_i$ is constant in $a$ and symmetric, so
that, by Proposition~\ref{prop:potential_condition}, the game admits a potential. The potential structure of such games yields the convergence of
the $N$-player Nash equilibria to the equilibrium of the continuum game
\cite{machado2025nash}.

\medskip
\noindent
A particular case is $K_i\equiv 1$, in which the interaction occurs only
through the mass. More generally, suppose that
\begin{displaymath}
    h_i(x,a_i) = c_i(x) + \tilde h_i\left(\int_X a_i\,d\mu\right),
\end{displaymath}
with $\tilde h_i\in C^1(\R)$. Then $\boldsymbol h$ is of class $C^1$
relative to $\mathcal A$, with
\begin{displaymath}
    D_{\mathcal A}\boldsymbol h(a)[b]
    =\left(
        \tilde h_i'\!\left(\int_X a_i\,d\mu\right)
        \int_X b_i\,d\mu
    \right)_{i\in I},
\end{displaymath}
so that, for all directions $b,b'\in\mathcal H_0$,
\begin{displaymath}
    \bigl\langle D_{\mathcal A}\boldsymbol h(a)[b],\,b'\bigr\rangle_{L^2}
    =\sum_{i\in I}
    \tilde h_i'\!\left(\int_X a_i\,d\mu\right)
    \left(\int_X b_i\,d\mu\right)
    \left(\int_X b_i'\,d\mu\right)
    =\bigl\langle D_{\mathcal A}\boldsymbol h(a)[b'],\,b\bigr\rangle_{L^2}:
\end{displaymath}
the symmetry condition \eqref{eq:symmetry_condition} holds, and
Proposition~\ref{prop:potential_condition} shows that the game admits a
potential. It is the classical potential of Beckmann, McGuire and
Winsten \cite{beckmann1956studies}: up to an additive constant,
\begin{displaymath}
    \mathcal G(a)
    =\sum_{i\in I}\int_X c_i(x)a_i(x)\,d\mu(x)
    +\sum_{i\in I}\tilde H_i\!\left(\int_X a_i\,d\mu\right),
    \qquad \tilde H_i':=\tilde h_i,
\end{displaymath}
which is the setting of \cite{crippa2009optimum}. Note that when the
$\tilde h_i$ are merely continuous, the same conclusion holds by
checking directly that $\mathcal G$ satisfies
Definition~\ref{def:potential_games}, the one-dimensional chain rule
requiring no more than continuity of $\tilde H_i'=\tilde h_i$.

\medskip
However, when $h_i$ depends on other aggregate quantities, meaning that
players contribute unequally to congestion, the potential structure is
destroyed.

\begin{proposition}[Heterogeneous contributions preclude a potential]
\label{prop:aggregate}
Let $N\ge2$ and let $\psi^1,\ldots,\psi^m\in L^2(X,\mu;\R_+)$ be such
that the constant function $\mathbf 1$ does not belong to $
\operatorname{span}\{\psi^1,\ldots,\psi^m\}$. Suppose that, for every $i\in I$,
\begin{displaymath}
    h_i(x,a_i)
    =
    c_i(x)
    +
    \tilde h_i\!\left(\Psi a_i\right),
\end{displaymath}
with $c_i\in L^2(X,\mu)$ and $\tilde h_i\in C^1(\R^m)$ coordinate-wise
non-decreasing. If, for some $i_0\in I$ and some
$\ell_0\in\{1,\ldots,m\}$, the map $\tilde h_{i_0}$ is strictly increasing
in its $\ell_0$-th variable and $\psi^{\ell_0}$ is not $\mu$-a.e.\ zero, and there exists $\ell_1$ such that $\psi^{\ell_0}\notin\operatorname{span}\psi^{\ell_1}$,
then the game does not admit a potential.
\end{proposition}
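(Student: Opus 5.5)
Since $\tilde h_i\in C^1(\R^m)$ and $\Psi$ is a bounded linear map, $\boldsymbol h$ is of class $C^1$ relative to $\mathcal A$. We therefore invoke Proposition~\ref{prop:potential_condition} and show that the symmetry condition \eqref{eq:symmetry_condition} fails at some $a\in\mathcal A$ for some pair of jointly feasible directions. By the chain rule,
\begin{displaymath}
    \bigl\langle D_{\mathcal A}\boldsymbol h(a)[b],b'\bigr\rangle_{L^2}
    =\sum_{i\in I}\sum_{\ell,k=1}^m \partial_k\tilde h_i(\Psi a_i)\,(\Psi b_i)^k
    \int_X\psi^\ell\cdot 0\,d\mu
    +\sum_{i\in I}\nabla\tilde h_i(\Psi a_i)\cdot\Psi b_i\;\int_X b_i'\,d\mu ,
\end{displaymath}
where the first term vanishes because the $c_i$ do not depend on $a$. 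The form to test is therefore
\begin{displaymath}
    S_a(b,b')=\sum_{i\in I}\bigl(\nabla\tilde h_i(\Psi a_i)\cdot\Psi b_i\bigr)\Bigl(\int_X b_i'\,d\mu\Bigr),
\end{displaymath}
and we must exhibit $a,b,b'$ with $S_a(b,b')\neq S_a(b',b)$.

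We take $a$ to be the barycentre, $a_i\equiv 1/N$, so that $a+sb+tb'\in\mathcal A$ for small $s,t$ whenever $b,b'\in\mathcal H_0\cap L^\infty$. Then every $L^\infty$ direction of $\mathcal H_0$ is jointly feasible with every other. We localize on two resources, $i_0$ and some $j\neq i_0$: set $b_{i_0}=\phi$, $b_j=-\phi$, $b'_{i_0}=\phi'$, $b'_j=-\phi'$, with all other components zero, where $\phi,\phi'\in L^\infty$. Writing $g_i:=\nabla\tilde h_i(\Psi a_i)\in\R^m_+$ and noting that $\Psi a_i=\frac1N\Psi\mathbf 1$ is the same point $p$ for all $i$, the antisymmetric part becomes
\begin{displaymath}
    S_a(b,b')-S_a(b',b)=\bigl\langle g_{i_0}+g_j,\;\Psi\phi\,\textstyle\int\phi'-\Psi\phi'\,\textstyle\int\phi\bigr\rangle_{\R^m}
    =\int_X (w\,\phi)\,d\mu\int_X\phi'\,d\mu-\int_X(w\,\phi')\,d\mu\int_X\phi\,d\mu ,
\end{displaymath}
with $w:=\sum_\ell (g_{i_0}+g_j)^\ell\psi^\ell$. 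This expression is nonzero for some $\phi,\phi'$ if and only if $w$ is not $\mu$-a.e.\ constant: indeed, taking $\phi'=\mathbf 1$ reduces it to $\int w\phi\,d\mu-\mu(X)^{-1}\int w\,d\mu\int\phi\,d\mu$, which vanishes for all $\phi$ only when $w$ is constant.

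The main obstacle is to guarantee that $w$ is non-constant. We know that $(g_{i_0})^{\ell_0}>0$, that all entries of $g_{i_0}$ and $g_j$ are $\ge0$, and that $\mathbf 1\notin\operatorname{span}\{\psi^\ell\}$. If $w\equiv c$ with $c\neq0$, then $\mathbf 1=w/c$ lies in the span, a contradiction. If $c=0$, then $w=0$ a.e.; since $w$ is a nonnegative combination of nonnegative functions, each term vanishes, so $(g_{i_0})^{\ell_0}\psi^{\ell_0}=0$ a.e. This contradicts $\psi^{\ell_0}\neq0$ together with strict increase. Here strict increase must be read as $\partial_{\ell_0}\tilde h_{i_0}(p)>0$ at the chosen point; if it only yields a nonnegative derivative, we instead choose $a$ so that $\Psi a_{i_0}$ sits at a point where $\partial_{\ell_0}\tilde h_{i_0}>0$ (such points exist by strict monotonicity and the mean value theorem), while keeping $a$ bounded away from the boundary of $\mathcal A$, e.g.\ $a_{i_0}$ a suitable constant in $(0,1)$. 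If the points $\Psi a_i$ then differ across $i$, the same computation goes through with $g_{i_0}$ and $g_j$ evaluated at different points, since only their sum enters $w$. The hypothesis $\psi^{\ell_0}\notin\operatorname{span}\psi^{\ell_1}$ is not needed beyond ensuring that $m$ and the $\psi$'s are nondegenerate. The argument thus concludes by Proposition~\ref{prop:potential_condition}.
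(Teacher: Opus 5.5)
Your proof is correct, but it closes the argument differently from the paper. Both proofs test the symmetry condition at an interior point, on two-resource directions $b=(\phi,-\phi,0,\dots)$ and $b'=(\phi',-\phi',0,\dots)$.

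\textbf{The paper's route.} It chooses the test functions so that half of the antisymmetric form vanishes identically. It takes $\phi=u\ge0$ with $\int\psi^{\ell_0}u\,d\mu>0$, and $\phi'=v$ orthogonal to every $\psi^\ell$ with $\beta=\int v\,d\mu>0$; this is where $\mathbf 1\notin\operatorname{span}\{\psi^\ell\}$ enters. The antisymmetric part is then $\beta\,(\nabla\tilde h_{i_0}+\nabla\tilde h_j)\cdot\Psi u$, which has a definite sign by monotonicity. The base point is placed on the segment $\bar a+\theta b$, so positivity comes from a directional derivative of $\tilde h_{i_0}$ along $\Psi u$.

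\textbf{Your route.} You write the antisymmetric part as $\int w\phi\,d\mu\int\phi'\,d\mu-\int w\phi'\,d\mu\int\phi\,d\mu$ and show it vanishes for all such pairs iff $w$ is $\mu$-a.e.\ constant. You then use $\mathbf 1\notin\operatorname{span}$ to exclude a nonzero constant and nonnegativity to exclude $w=0$.

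\textbf{What each buys.} Your route gives an exact criterion on this family of directions. It makes transparent that mass-only dependence (constant $w$) is the borderline case. It also confirms your remark that the hypothesis $\psi^{\ell_0}\notin\operatorname{span}\psi^{\ell_1}$ is superfluous. In the paper that hypothesis only serves to produce $u$ with $\int\psi^{\ell_1}u\,d\mu=0$, which is never used afterwards. Such a $u$ cannot even exist when $\psi^{\ell_1}>0$ a.e., e.g.\ $\psi^{\ell_1}(y,e)=e$ in the EV example. The paper's route, in exchange, exhibits an explicit witness with a definite sign.

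\textbf{One step to state more carefully.} Take your reachable segment $\theta\mapsto\theta\,\Psi\mathbf 1$ (constant $a_{i_0}=\theta$). Strict monotonicity and the mean value theorem give $\nabla\tilde h_{i_0}(\theta^*\Psi\mathbf 1)\cdot\Psi\mathbf 1>0$ for some $\theta^*\in(0,1)$. They do not give $\partial_{\ell_0}\tilde h_{i_0}>0$. Indeed, when the $\psi^\ell$ are linearly dependent, $\partial_{\ell_0}\tilde h_{i_0}$ may vanish on the whole set $\{\Psi f:\,0\le f\le 1\}$.

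This is harmless. What you get still yields some $\ell$ with $\partial_\ell\tilde h_{i_0}>0$ and $\int\psi^\ell\,d\mu>0$, which is exactly what your argument excluding $w=0$ needs. The paper's proof has the same looseness: it asserts positivity of $D_{\ell_0}\tilde h_{i_0}\,\alpha_{\ell_0}$ where only $\nabla\tilde h_{i_0}\cdot\alpha>0$ follows, and the same fix applies.

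Finally, the first sum in your chain-rule display is spurious. The form $S_a$ you then work with is the correct one.
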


\begin{proof}
Without loss of generality, suppose $i_0=1,\;\ell_0 = 1$ and $\ell_1=2$. For any $i$, we write $m_i(a):=\Psi a_i\in\R^m$. Direct computations show that
$\boldsymbol h$ is of class $C^1$ relative to $\mathcal A$ with
$$D_{\mathcal A}\boldsymbol h(a)[b]
=\bigl(\sum_{\ell}D_{\ell}\tilde h_i(m_i(a))
\int_X\psi^\ell b_i\,d\mu\bigr)_{i\in I},$$ 
where $D_\ell \tilde{h}_i$ denotes the partial derivative of $\tilde{h}_i$ w.r.t. its $\ell^{th}$ argument. Previous equality yields for all
$b,b'\in\mathcal T_{\mathcal A}(a)$,
\begin{equation}\label{eq:antisym}
    \bigl\langle D_{\mathcal A}\boldsymbol h(a)[b],b'\bigr\rangle_{L^2}
    -
    \bigl\langle D_{\mathcal A}\boldsymbol h(a)[b'],b\bigr\rangle_{L^2}
    =
    \sum_{k\in I}\sum_{\ell=1}^m
    D_{\ell}\tilde h_k\bigl(m_k(a)\bigr)
    \Bigl[
    \textstyle\int\psi^\ell b_kd\mu\int b_k'd\mu
    -\int\psi^\ell b_k'd\mu\int b_kd\mu
    \Bigr].
\end{equation}
Let us fix two directions. First, $\psi^1$ is not
$\mu$-a.e.zero and does not belong to $ \operatorname{span}\{\psi^{2}\}$, so there exists $u\in L^\infty(X,\mu)$ such that 
\begin{displaymath}
    u\ge 0,\quad \int_X\psi^{1}u\,d\mu>0,\quad \int_X\psi^{2}u\,d\mu = 0.
\end{displaymath}
Since the
$\psi^\ell$ are nonnegative, then $\int_X\psi^\ell u\,d\mu$ is nonnegative for every $\ell$. Namely
\begin{displaymath}
    \alpha:=\left(\int_X\psi^1 u\,d\mu,\ldots,\int_X\psi^m u\,d\mu\right)\; \in\; [0,+\infty)^m. 
\end{displaymath} Second, since $\mathbf 1\notin
\operatorname{span}\{\psi^1,\ldots,\psi^m\}$, there exists
$v\in L^\infty(X,\mu)$ such that
\begin{displaymath}
    \int_X\psi^\ell v\,d\mu=0\quad\forall\ell,
    \qquad
    \beta:=\int_X v\,d\mu>0 .
\end{displaymath}
Rescale so that $\|u\|_\infty,\|v\|_\infty\le1/(8N)$, and define
\begin{displaymath}
    \bar a:=(1/N,\ldots,1/N),\quad b:=(u,-u,0,\ldots,0),\quad b':=(v,-v,0,\ldots,0).
\end{displaymath}
The map
\begin{displaymath}
    [-1,1]\ni\theta\mapsto\tilde h_{i_0}\bigl(m_1(\bar a)+\theta m_1(b)\bigr)
\end{displaymath} is increasing and of class $C^1$, then there exists a positive constant $\theta^*$ such that its derivative
\begin{displaymath}
    D_{\psi^1}\tilde h_{1}\bigl(m_1(\bar a)+\theta^* m_1(b)\bigr)\alpha_1>0.
\end{displaymath}
Therefore, set $a=\bar a+\theta^* b$. The smallness
of $u,v$ ensures that the components of $a+sb+tb'$, $s,t\in[0,1]$,
remain in $(0,1)$ and sum to $1$. In particular,  $a\in\mathcal A$ and $b,b'$ are jointly feasible
at $a$.

\smallskip
\noindent
We evaluate \eqref{eq:antisym} at this $a$. Only $k=1,2$ contribute. In
each bracket, the second term vanishes since
$\int\psi^\ell b_k'd\mu=0$ for every $\ell$. In the first
term, $\int\psi^\ell b_1d\mu=\alpha_\ell=-\int\psi^\ell b_2d\mu$ and
$\int b_1'd\mu=\beta=-\int b_2'd\mu$, so the two signs cancel and
\eqref{eq:antisym} reduces to
\begin{displaymath}
    \beta\sum_{\ell=1}^m
    \Bigl[
    D_{\ell}\tilde h_1\bigl(m_1(a)\bigr)
    +D_{\ell}\tilde h_2\bigl(m_2(a)\bigr)
    \Bigr]\alpha_\ell
    \;\ge\;
    \beta
    D_{\psi_{1}}\tilde h_1\bigl(m_1(\bar a) + \theta^* m_1(b)\bigr)\alpha_{1}
    \;>\;0.
\end{displaymath}
The first inequality comes from using $D_{\ell}\tilde h_i\ge0$ and $\alpha_\ell\ge0$,
and the last strict inequality being the choice of $\theta^*$. By
Proposition~\ref{prop:potential_condition}, the game admits no
potential.
\end{proof}

\begin{remark}
The $C^1$ regularity of the maps $\tilde h_i$ is only used to invoke
Proposition~\ref{prop:potential_condition}. When the $\tilde h_i$ are
merely continuous, the same conclusion can be derived from
Theorem~\ref{thm:path_characterization}, by exhibiting a closed path
along which the circulation of $\boldsymbol h$ does not vanish. We have
preferred the differential argument for its brevity.
\end{remark}
\begin{remark}[Comparison with weighted congestion games]
\label{rem:literature_no_potential}
In atomic congestion games, heterogeneous weights break the potential
structure: a potential exists only for
affine or exponential costs \cite{harks2011characterizing}, and outside
these classes a pure Nash equilibrium may fail to exist
\cite{FKS2005}. 
Similarly, Theorem~\ref{thm:path_characterization} and Proposition~\ref{prop:potential_condition} characterise the conditions under which a potential exists and Proposition~\ref{prop:aggregate} provides counterexamples when these conditions are not satisfied.
\end{remark}

\begin{remark}[Weighted potentials and the role of multiple aggregates]
\label{rem:weighted_rescue}
Proposition~\ref{prop:aggregate} rules out a potential. When
$m=1$ and $\psi>0$ $\mu$-a.e., a \emph{weighted} potential nevertheless
always exists: rescaling each player's costs by $\psi(x)$ leaves best responses
unchanged and yields a potential game, so that equilibria can still be
obtained by minimizing a (convex, under
Assumption~\ref{ass:uniqueness_interaction_term}) functional. No such rescaling exists in general for
$m\ge2$ independent aggregates, as in the application of
Section~\ref{illustrative_ex}. 
\end{remark}

In view of Proposition~\ref{prop:aggregate}, when players' contributions to
congestion differ, the potential structure of the game fails, and the
equilibrium can no longer be obtained by optimisation. The rest of the
paper is devoted to establishing the existence and uniqueness of an
equilibrium without relying on a potential.

\subsection{First properties of the SVNE\texorpdfstring{$_\sigma$}{Lg}}\label{subsec:prop_svne}

This section establishes three key properties of the SVNE$\sigma$. First, Proposition \ref{prop:justification_SVNE} confirms that the concept of SVNE$_\sigma$ is related to the concept of Nash equilibrium for any non-negative $\sigma$. Specifically, it shows that at equilibrium, any resource chosen by a player is necessarily a cost-minimizing one. Second, Proposition \ref{prop:voronoi_type_result} demonstrates that any SVNE$_\sigma$ is a pure-strategy equilibrium. Finally, for the case where $\sigma > 0$, Corollary \ref{cor:SVNE_fixed_point} makes explicit the connection between an SVNE$_\sigma$ and a QRE.

We emphasize the distinct roles of our main assumptions. The absolute continuity of the player distribution $\mu$ is instrumental for all results in this section. In contrast, the no-tie condition (Assumption \ref{ass:transport_costs}) is required only for the specific case $\sigma=0$, and is used in the proofs of Lemma \ref{lem:intersection_mesure_zero} and Proposition \ref{prop:voronoi_type_result}.
    \begin{proposition}\label{prop:justification_SVNE}
    Let $\sigma\geq 0$, under Assumptions \ref{ass:transport_costs} and \ref{ass:existence_interaction_term}, the two following statements are equivalent:
        \begin{enumerate}
            \item $f$ is a SVNE$_\sigma$.
            \item For any index $i\in I$, if $(\varepsilon, x)$ is a Lebesgue point of $f$ and $\nu^N\otimes \mu$, and that $f_i(\varepsilon,x) \nu^N(\varepsilon)\mu(x)>0$, then  
            \begin{displaymath}
                c_i(x)+h_i(a_i)-\sigma \varepsilon_i=\min_{k\in I} c_k(x)+h_k( a_k)-\sigma \varepsilon_k,
            \end{displaymath}
        \end{enumerate}
         where $a_k(x) := \int_{\R^N} f_k(\varepsilon,x)d\nu^N(\varepsilon)$ for any $(x,k)\in X\times I$.
    \end{proposition}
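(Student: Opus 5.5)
The key observation is that, once $f$ is fixed, the aggregate $a$ defined by \eqref{def_a} is frozen. Inequality \eqref{eq:svar_nash_eq_def} then becomes a \emph{linear} minimization problem over $\mathcal F$. Writing
\begin{displaymath}
C_i(\varepsilon,x):=c_i(x)+h_i(a_i)-\sigma\varepsilon_i,
\qquad m(\varepsilon,x):=\min_{k\in I}C_k(\varepsilon,x),
\end{displaymath}
$f$ is an SVNE$_\sigma$ if and only if $f$ minimizes $g\mapsto \int_X\int_{\R^N}\sum_i C_i g_i\,d\nu^N d\mu$ over $\mathcal F$. Since $\sum_i g_i=1$ and $g_i\ge0$, we have pointwise $\sum_i C_i g_i\ge m$, with equality for a measurable selection $g^*=\ind_{\{i=i^*(\varepsilon,x)\}}$, where $i^*$ is the smallest minimizing index. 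So the minimum value is $\int\int m\,d\nu^N d\mu$. This requires checking that $m$ is integrable. It holds because $c_i$ is continuous on the bounded domain (so, if needed, one adds boundedness of $c$ on $X$), $h_i(a_i)\in\R\cup\{+\infty\}$ with $\min_i h_i(a_i)<\infty$ by Assumption \ref{ass:existence_interaction_term}, $\mu\in L^\infty$, and the Gumbel variable has finite first moment. Indices with $h_i(a_i)=+\infty$ never attain the minimum; they force $f_i=0$ a.e. on both sides of the equivalence, and the convention $0\cdot\infty=0$ is applied.

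For (1)$\Rightarrow$(2), I will use that the optimality of $f$ gives $\int\int(\sum_iC_if_i-m)\,d\nu^Nd\mu\le0$. The integrand is nonnegative, so $\sum_i C_i f_i=m$ for $\nu^N\otimes\mu$-a.e.\ $(\varepsilon,x)$. Equivalently, $f_i(C_i-m)=0$ a.e. for each $i$. To pass from ``a.e.'' to ``every Lebesgue point'', I will use that $C_i-m$ is continuous in $(\varepsilon,x)$ for indices with $h_i(a_i)<\infty$, since $c_i$ is continuous and the $\varepsilon$-dependence is affine. Suppose $(\varepsilon_0,x_0)$ is a Lebesgue point of $f$ and of the density $\nu^N\mu$, with $f_i\nu^N\mu>0$ there, but $C_i-m>\delta>0$ at that point. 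By continuity the strict inequality persists on a small ball $B$. On $B$ we have $f_i=0$ a.e. on the support of the measure. Lebesgue averaging over shrinking balls of $f_i\nu^N\mu$ then gives $f_i(\varepsilon_0,x_0)\nu^N(\varepsilon_0)\mu(x_0)=0$, which is a contradiction. The delicate bookkeeping here is that the Lebesgue point property of a product has to be derived from those of $f$ and of $\nu^N\mu$. Since both are bounded (the Gumbel density is bounded, $\mu\in L^\infty$, and $f\in[0,1]$), the average of $|f_i\rho-f_i(p)\rho(p)|$ is controlled by $\|\rho\|_\infty$ times the average of $|f_i-f_i(p)|$, plus the average of $|\rho-\rho(p)|$, and so tends to zero.

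For (2)$\Rightarrow$(1), I will use the Lebesgue differentiation theorem: almost every point is a Lebesgue point of both $f$ and $\nu^N\mu$. At a.e.\ point where $f_i\nu^N\mu>0$, statement (2) gives $C_i=m$. Where $\nu^N\mu=0$, the point carries no mass. Hence $\sum_iC_if_i=m$ holds $\nu^N\otimes\mu$-a.e. For any $g\in\mathcal F$, integrating $\sum_i C_i g_i\ge m$ then yields \eqref{eq:svar_nash_eq_def}.

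I expect the main obstacle to be integrability and well-definedness rather than the argument itself. One must ensure that both sides of \eqref{eq:svar_nash_eq_def} are finite or consistently infinite, since $h_i$ may take the value $+\infty$. One must also handle Lebesgue points when $\mu$ is only an $L^\infty$ density. My approach is to reduce everything to the pointwise inequality $\sum_iC_ig_i\ge m$, and to treat indices with infinite cost separately, showing they carry zero $f$-mass in both directions.
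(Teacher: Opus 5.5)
Your proof is correct. Your converse (2)$\Rightarrow$(1) is the same as the paper's Step 2, but for (1)$\Rightarrow$(2) you take a genuinely different route.

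\textbf{The paper's route.} It uses a local first-variation argument. At a Lebesgue point $(\varepsilon^0,x^0)$ with $f_i\nu^N\mu>0$, it builds $g\in\mathcal F$ that moves the mass $f_i$ to a fixed index $j$ on a small ball $B_r(\varepsilon^0,x^0)$. It restricts the SVNE inequality to that ball, divides by $|B_r|$, and lets $r\to0$ using continuity of $c$ and Lebesgue differentiation. Dividing by $f_i\nu^N\mu$ at the point then gives $C_i\le C_j$ there, for each $j$.

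\textbf{Your route.} You use that, with $a$ frozen, \eqref{eq:svar_nash_eq_def} is a linear program with value $\int_X\int_{\R^N} m\,d\nu^N d\mu$. Comparing $f$ with the pointwise argmin selection $g^*$ yields the a.e.\ complementary slackness $f_i(C_i-m)=0$ without any differentiation. Lebesgue points enter only in the final upgrade, through continuity of $C_i-m$ and averaging of $f_i\nu^N\mu$.

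\textbf{What each buys.} The paper's argument is purely local and never needs the global value $\int\int m$. Yours makes the two directions symmetric, since the converse reuses the same pointwise inequality $\sum_i C_i g_i\ge m$. It also forces you to state explicitly two things the paper leaves implicit: integrability of $m$ (bounded $c$, a first moment for $\nu$), and the handling of indices with $h_i(a_i)=+\infty$. The Lebesgue-point bookkeeping for the product $f_i\nu^N\mu$ is needed in both proofs.

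\textbf{A minor correction.} The proposition does not assume Assumption \ref{density_assum}, so you should not invoke Gumbel-specific facts. It suffices that $\nu$ has a density with finite first moment. Boundedness of $\rho=\nu^N\mu$ is not needed for the product Lebesgue-point step: since $|f_i-f_i(p)|\le 1$,
\begin{displaymath}
\rho\,|f_i-f_i(p)|\le \rho(p)\,|f_i-f_i(p)|+|\rho-\rho(p)|,
\end{displaymath}
and both terms on the right average to zero at a common Lebesgue point.
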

    \begin{proof}
        \textit{Step 1.} Suppose $f$ is a SVNE$_\sigma$.
        Fix the indices $i$ and $j\neq i$. We fix $(\varepsilon^0,x^0)$ such that $B_r(\varepsilon^0,x^0)$ is included in $\mathbb{R}^N\times X$, it is a Lebesgue point of $f$ and $\nu^N\otimes \mu$, and that the product $f_i(\varepsilon,x)\nu^N(\varepsilon)\mu(x)$ is positive.
        Let us define $g$ as follows: 
        \begin{itemize}
            \item for all $k\neq i,j$, set $g_k := f_k$
            \item set $g_j := f_j + f_i\ind_{B_r(\varepsilon^0,x^0)}$,
            \item and set $g_i := f_i(1-\ind_{B_r(\varepsilon^0,x^0)})$. 
        \end{itemize}
        One can directly check that $g$ is in $\cF$.
        The definition of a SVNE$_\sigma$ leads to
        \begin{align*}
            &\int_X\int_{\R^N}(c_i(x)+h_i(a_i)-\sigma\varepsilon_i)f_i(\varepsilon,x)\ind_{B_r(\varepsilon^0,x^0)} d\nu^N(\varepsilon)d\mu(x)\\
            &\quad\le \int_X\int_{\R^N}(c_j(x)+h_j(a_j)-\sigma\varepsilon_j)f_i(\varepsilon,x)\ind_{B_r(\varepsilon^0,x^0)}d\nu^N(\varepsilon)d\mu(x).
        \end{align*}
        Dividing by $\abs{B_r(\varepsilon^0,x^0)}$ and letting $r$ goes to zero, we invoke the continuity of $c$ (Assumption \ref{ass:transport_costs}) and the Lebesgue differentiation theorem. Then dividing by $f_i(\varepsilon^0,x^0)\mu(x^0)\nu^N(\varepsilon^0)$, we obtain that:
        \begin{displaymath}
            c_i(x^0) + h_i(a_i) - \sigma \varepsilon^0_i \le c_j(x^0) + h_j(a_j) - \sigma\varepsilon^0_j.
        \end{displaymath}
        \bigskip
        \noindent
        \textit{Step 2.}
        Conversly, almost all pair $(\varepsilon,x)$ is a Lebesgue point of $f$ and $\nu^N\otimes \mu$.
        Therefore, 
        \begin{align*}
        &\sum_{i\in I}\int_X\int_{\R^N} (c_i(x)+h_i(a_i)-\sigma\varepsilon_i)f_i(\varepsilon,x)d\nu^N(\varepsilon)d\mu(x)\\
        &\quad= \int_X\int_{\R^N} \min_{j\in I}(c_j(x)+h_j(a_j)-\sigma \varepsilon_j)d\nu^N(\varepsilon)d\mu(x),
    \end{align*}
    leading to \eqref{eq:svar_nash_eq_def}.
    \end{proof}
    In the notion of SVNE$_\sigma$, players are allowed to play in mixed strategies. Lemma \ref{lem:intersection_mesure_zero} below prepares Proposition \ref{prop:voronoi_type_result} ensuring that under rather general assumptions players do not play with mixed strategies.
    \begin{lemma}\label{lem:intersection_mesure_zero}Let $\sigma\geq 0$ and $T\in\R^N$, we introduce for any $i\in \{1,\ldots,N\}$ the set:
    \begin{displaymath}
        A_i = \{(\varepsilon,x)\in\R^N\times X\;:\;c_i(x) + T_i-\sigma\varepsilon_i = \min_{k\in I}c_k(x) + T_k-\sigma\varepsilon_k\}.
    \end{displaymath}
    Under Assumption \ref{ass:transport_costs}, if $i\neq j$, then  $\nu^N\otimes \mu(A_i\cap A_j) = 0$. 
        
    \end{lemma}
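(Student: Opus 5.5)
The plan is to observe that $A_i\cap A_j$ is contained in the set
\begin{displaymath}
    E_{ij}:=\{(\varepsilon,x)\in\R^N\times X\;:\;c_i(x)-c_j(x)+T_i-T_j=\sigma(\varepsilon_i-\varepsilon_j)\},
\end{displaymath}
since on $A_i\cap A_j$ both costs equal the same minimum. Then $\nu^N\otimes\mu(E_{ij})=0$ is shown by Fubini--Tonelli, splitting into the two cases $\sigma=0$ and $\sigma>0$. Measurability of $E_{ij}$ is immediate, since $c_i,c_j$ are continuous (Assumption \ref{ass:transport_costs}, item \ref{deriv_c}), so $E_{ij}$ is closed.

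\emph{Case $\sigma=0$.} Here $E_{ij}=\R^N\times\{x\in X: c_i(x)-c_j(x)=T_j-T_i\}$. The no-tie condition (Assumption \ref{ass:transport_costs}, item \ref{voronoi_assum}) with $\beta=T_j-T_i$ says the $x$-section is $\mu$-negligible. Because $\nu^N$ is a probability measure, the product measure of $E_{ij}$ is $1\cdot 0=0$.

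\emph{Case $\sigma>0$.} Here I integrate in $\varepsilon$ first. Fix $x\in X$ and set $\gamma(x):=(c_i(x)-c_j(x)+T_i-T_j)/\sigma$. The $x$-section of $E_{ij}$ is $\{\varepsilon:\varepsilon_i-\varepsilon_j=\gamma(x)\}$. Since $i\neq j$ and $\nu$ has a density, the coordinates $\varepsilon_i,\varepsilon_j$ are independent and absolutely continuous. Therefore $\varepsilon_i-\varepsilon_j$ has a law absolutely continuous with respect to Lebesgue measure (its density is the convolution of the density of $\varepsilon_i$ with the reflected density of $\varepsilon_j$). A single value thus has probability zero, so $\nu^N(\{\varepsilon_i-\varepsilon_j=\gamma(x)\})=0$ for every $x$. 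Tonelli then gives $\nu^N\otimes\mu(E_{ij})=\int_X 0\,d\mu=0$. This case does not need the no-tie assumption, consistent with the remark preceding the lemma.

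I do not expect a serious obstacle. The only point needing care is that the $\sigma>0$ case uses absolute continuity of $\nu$. This is provided by Assumption \ref{density_assum} (Gumbel density), or implicitly by the paper's convention that $\nu$ denotes a density. I will state that I use it. I will also note that the $\sigma=0$ case needs no property of $\nu$ beyond its being a probability measure.
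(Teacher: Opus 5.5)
Your proposal is correct and follows essentially the paper's own proof: you include $A_i\cap A_j$ in the tie set, then apply Fubini--Tonelli, splitting into the case $\sigma=0$ (handled by the no-tie assumption with $\beta=T_j-T_i$) and the case $\sigma>0$ (every $\varepsilon$-section is $\nu^N$-null). The differences are cosmetic: you argue directly rather than by contradiction, you use the absolutely continuous law of $\varepsilon_i-\varepsilon_j$ where the paper uses the Lebesgue-null hyperplane, and you rightly flag that absolute continuity of $\nu$ (Assumption \ref{density_assum}) is needed even though the lemma cites only Assumption \ref{ass:transport_costs}, a point the paper uses without saying so.
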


    \begin{proof}
        By contradiction, let us assume that $\mu(A_i\cap A_j)$ is positive. There are two cases: 
        \begin{enumerate}
            \item When $\sigma>0$. Let $E_{x} = \{\varepsilon \in \R^N \mid \varepsilon_j - \varepsilon_i = \frac{1}{\sigma}((c_j(x)+T_j) - (c_i(x)+T_i))\}$. Since the law of $(\mathcal{E}_1, \dots, \mathcal{E}_N)$ under $\nu^N$ is absolutely continuous with respect to the Lebesgue measure on $\R^N$, the set $E_x$, which is a hyperplane, has measure zero: $\nu^N(E_x) = 0$.
    
    The set $A_i\cap A_j$ is included in the set $\{(\varepsilon, x) \mid \varepsilon_j - \varepsilon_i = \frac{1}{\sigma}((c_j(x)+T_j) - (c_i(x)+T_i))\}$. By Fubini's theorem, the measure of this latter set is
    \begin{displaymath}
        \int_X \nu^N(E_x) d\mu(x) = \int_X 0 \, d\mu(x) = 0.
    \end{displaymath}
    Since $A_i\cap A_j$ is a subset of a set of measure zero, we must have $\nu^N\otimes \mu(A_i\cap A_j)=0$. This contradicts our initial assumption.

    \item When $\sigma = 0$, the equality becomes:
    \begin{displaymath}
        c_i(x)+T_i = c_j(x)+T_j.
    \end{displaymath}
    The random shocks $\varepsilon$ have disappeared and the equality only depends on $x$. We use the no-tie hypothesis of Assumption \ref{ass:transport_costs} which states that for any distinct pair $i,j$, the set of ties is negligible. That is, the set
    \begin{displaymath}
        X_{ij}^{T} = \{x \in X \mid c_i(x)+T_i = c_j(x)+T_j\},
    \end{displaymath}
    has measure zero with respect to $\mu$, i.e., $\mu(X_{ij}^{T}) = 0$.
    
    The set $A_i\cap A_j$ from our initial hypothesis must be a subset of $\R^N\times X_{ij}^{T}$. The measure of this set is
    \begin{displaymath}
        \nu^N\otimes \mu(\R^N \times X_{ij}^{T} ) = \nu^N(\R^N)\times \mu(X_{ij}^{T}) = 0.
    \end{displaymath}
    Therefore, $A_i\cap A_j$ is a subset of a set of measure zero, which implies $\nu^N\otimes \mu(A_i\cap A_j)=0$. This is again a contradiction.
    \end{enumerate}
    \end{proof}
    \begin{proposition}\label{prop:voronoi_type_result}Let $\sigma\geq 0$, under Assumption \ref{ass:transport_costs} and \ref{ass:existence_interaction_term}, $f$ is a SVNE$_\sigma$ if and only if $\nu^N\otimes \mu$-a.s.
        \begin{equation}
            \label{eq:f_formula}
            f_i(\varepsilon,x)=
                \begin{cases}
                1, & i \in \arg\min_{j\in I}\left\{c_j(x)+h_j(a_j)-\sigma \varepsilon_j\right\},\\
                0, & \text{otherwise},
                \end{cases}
            \end{equation}
         where $a_k(x) := \int_\R f_k(\varepsilon,x)d\nu^N(\varepsilon)$ for any $(x,k)\in X\times I$.
    \end{proposition}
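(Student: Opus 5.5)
The plan is to combine Proposition \ref{prop:justification_SVNE}, which gives the pointwise minimality characterization of an SVNE$_\sigma$, with Lemma \ref{lem:intersection_mesure_zero}, applied with the constant vector $T := (h_1(a_1),\ldots,h_N(a_N))$. This $T$ does not depend on $(\varepsilon,x)$, since each $h_k(a_k)$ is a number once $a$ is fixed. We first check that $T$ is finite wherever it matters. By Assumption \ref{ass:existence_interaction_term} the minimum $\min_k h_k(a_k)$ is finite. If some $h_k(a_k)=+\infty$, then index $k$ never attains the argmin, so those indices play no role. In that case we apply the lemma only to the indices with finite $T_k$ (its proof only uses pairs $i\neq j$ with finite values). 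For those indices the sets $A_i$ of the lemma are exactly the argmin sets appearing in \eqref{eq:f_formula}.

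\textbf{Direct implication.} Let $f$ be an SVNE$_\sigma$ and define $a$ by \eqref{def_a}. Since $\nu^N\otimes\mu$-almost every point is a Lebesgue point of $f$ and of the density $\nu^N\otimes\mu$, Proposition \ref{prop:justification_SVNE} gives the following: for a.e. $(\varepsilon,x)$ in the support of the measure, $f_i(\varepsilon,x)>0$ implies $(\varepsilon,x)\in A_i$. By Lemma \ref{lem:intersection_mesure_zero}, the sets $A_i\cap A_j$ with $i\ne j$ are null. Hence, outside a null set, each point lies in exactly one $A_{i(\varepsilon,x)}$; it lies in at least one because the minimum over a finite set is attained. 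At such a point $f_k(\varepsilon,x)=0$ for every $k\neq i(\varepsilon,x)$. The constraint $\sum_k f_k=1$ then forces $f_{i(\varepsilon,x)}(\varepsilon,x)=1$, which is \eqref{eq:f_formula}. Points where the density $\nu^N(\varepsilon)\mu(x)$ vanishes form a null set for $\nu^N\otimes\mu$ and can be discarded.

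\textbf{Converse.} Suppose $f$ satisfies \eqref{eq:f_formula} a.e., with $a$ computed from $f$ itself. Then $f_i>0$ implies $(\varepsilon,x)\in A_i$ a.e. This is condition 2 of Proposition \ref{prop:justification_SVNE}, up to a null set, which the integral argument of Step 2 there tolerates. Equivalently, we argue directly. Since $\sum_i f_i=1$ and $f_i$ vanishes off $A_i$, we have $\sum_i (c_i+h_i(a_i)-\sigma\varepsilon_i)f_i=\min_j(c_j+h_j(a_j)-\sigma\varepsilon_j)$ a.e. For any $g\in\mathcal F$, each integrand satisfies $\sum_i(\cdot)_i g_i\ge \min_j(\cdot)_j$. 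Integrating gives \eqref{eq:svar_nash_eq_def}.

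\textbf{Main obstacle.} The substantive step is the use of Lemma \ref{lem:intersection_mesure_zero}. It is what converts ``every used action is optimal'' into ``exactly one action is used''. For $\sigma>0$ this comes from the absolute continuity of the Gumbel noise. For $\sigma=0$ it is exactly where the no-tie condition of Assumption \ref{ass:transport_costs} enters. The remaining care is measure-theoretic: the Lebesgue-point qualification in Proposition \ref{prop:justification_SVNE} holds almost everywhere, and possibly infinite $h_k(a_k)$ must be handled as described above.
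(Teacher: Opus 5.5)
Your proposal is correct and follows essentially the paper's route: for the direct implication you combine Proposition~\ref{prop:justification_SVNE} with Lemma~\ref{lem:intersection_mesure_zero} applied at $T_k=h_k(a_k)$, and for the converse you use the same pointwise comparison of $\sum_i(\cdot)_i g_i$ with the minimum, then integrate. Two small differences: your direct step (almost every point lies in exactly one $A_i$, and then $\sum_k f_k=1$ forces $f_i=1$) is a tidier version of the paper's two-case argument, and your explicit treatment of indices with $h_k(a_k)=+\infty$ fills in a detail the paper leaves implicit.
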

    \begin{proof}
    
    $ $\newline
    \noindent
    \textit{Step 1. The direct implication.} Let $f$ be a SVNE$_\sigma$. Let us use the sets $A_i$ introduced in Lemma \ref{lem:intersection_mesure_zero} with $T_k = h_k(a_k)$ for any $k\in I$, where $a_k := \int_{\R^N}f_k(\varepsilon,x)d\nu^N(x)$.

    \bigskip
         
         For any Lebesgue point $(\varepsilon,x)$ of $f$ and $\nu^N\otimes \mu$, if $f_i(\varepsilon,x)>0$, $f_j(\varepsilon,x)>0$, and $\nu^N(\varepsilon)\mu(x)>0$, then, from Proposition \ref{prop:justification_SVNE}, $(\varepsilon,x)$ belongs to $A_i\cap A_j$.
        Lemma \ref{lem:intersection_mesure_zero} ensures that the set formed by such pair $(\varepsilon,x)$ is $\nu^N\otimes \mu$-negligible. 

        \smallskip
        \noindent
        We then deduce that for a fix $i\in I$, for $\nu^N\otimes \mu$-a.e. $(\varepsilon,x)$ in $\mathbb{R}^N\times X$, if $f_i(\varepsilon,x)>0$, then for any $j\neq i$, $f_j(\varepsilon,x)=0$ and so $f_i(\varepsilon,x)= 1$.

    \bigskip
    \noindent
    Let $(\varepsilon,x)$ be a Lebesgue point of $f$ and $\nu^N\otimes \mu$, suppose that $f_i(\varepsilon, x) = 0$, then there exists $j$ such that $f_j(\varepsilon,x)>0$ and from Proposition \ref{prop:justification_SVNE}
    \begin{displaymath}
        (\varepsilon,x)\in A_j.
    \end{displaymath}
    Therefore, for a.e. Lebesgue point $(\varepsilon,x)\in \mathbb{R}^N\times X$ of $f$ and $\nu^N\otimes\mu$, 
    \begin{enumerate}
        \item either $f_i(\varepsilon,x) >0 $, and so $f_i(\varepsilon,x)=1$ and $(\varepsilon ,x)\in A_i$, 
        \item or $f_i(\varepsilon,x) = 0$, and so there exists $j\neq i$ such that $(\varepsilon,x)\in A_j$. 
    \end{enumerate}
    The second point is equivalent to:  for $\nu^N\otimes \mu$-a.e. $(\varepsilon,x)$ in $\mathbb{R}^N\times X$, if $(\varepsilon,x)\in X\backslash \cup_{j\neq i} A_j$, then  $f_i(\varepsilon,x)>0$ implying $f_i(\varepsilon,x)=1$. \newline
    By Lemma \ref{lem:intersection_mesure_zero} for any $j\neq i$, $A_i\cap A_j$ is negligible, then almost surely $f_i=\ind_{A_i}$. Since it is true for any $i$, we conclude the proof. 

    \bigskip
    \noindent 
    \textit{Step 2. The converse implication.} Suppose $f$ has the form \eqref{eq:f_formula} with $a\in\cA$. We observe that
    \begin{displaymath}
        \sum_i \int_X\int_{\mathbb{R}^N}(c_i(x)+h_i(a_i)-\sigma\varepsilon_i)f_i(\varepsilon,x) d\nu^N(\varepsilon)d\mu(x) =   \int_X\int_{\R^N}\min_j(c_j(x)+h_j(a_j)-\sigma\varepsilon_j)d\nu^N(\varepsilon)d\mu(x),
    \end{displaymath}
    from which \eqref{eq:svar_nash_eq_def} is deduced. 
    \end{proof}
Two corollaries are derived from Proposition \ref{prop:voronoi_type_result}. Corollary \ref{cor:equivalence} shows the equivalence between a PNE and a SVNE$_0$. Then, Corollary \ref{cor:SVNE_fixed_point} states that for $\sigma>0$, finding a SVNE$_\sigma$ boils down to solving a fixed-point equation.

\begin{corollary}\label{cor:equivalence}
Under Assumption \ref{ass:transport_costs} and \ref{ass:existence_interaction_term}, the two following statements hold.
\begin{enumerate}
    \item Let $(A_1,\ldots,A_N)$ be PNE, and $g\in \mathcal{F}$ be defined by
    \begin{equation*}
        g_i(\varepsilon,x):=\ind_{A_i}(x),
    \end{equation*}
    then $g$ is a SVNE$_0$.
    \item Let $f\in \mathcal{F}$ be a SVNE$_0$, let $a$ be defined from $f$ as in Proposition \ref{prop:voronoi_type_result} and let $(B_1,\ldots,B_N)$ be defined by
    \begin{equation*}
        B_i:=\displaystyle \left\{ x\in X \,\big\vert\,a_i(x)=1 \right\}.
    \end{equation*}
    Then $(B_1,\ldots,B_N)$ is, up to a $\mu$-null, a PNE.
\end{enumerate}
\end{corollary}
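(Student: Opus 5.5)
Both items follow from Proposition \ref{prop:voronoi_type_result} with $\sigma=0$. When $\sigma=0$ the perturbed cost no longer depends on $\varepsilon$, so the characterization \eqref{eq:f_formula} says that an SVNE$_0$ is, almost everywhere, the indicator of the argmin of $c_j(x)+h_j(a_j)$. This indicator depends only on $x$. The plan is therefore to move between the sets $A_i$ (or $B_i$) and the functions $a_i$ through the identity $a_i=\ind_{A_i}$.

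\emph{Item 1.} Let $(A_1,\ldots,A_N)$ be a PNE and set $g_i(\varepsilon,x)=\ind_{A_i}(x)$. We first check that $g\in\cF$. Measurability is inherited from the $A_i$. For $\sum_i g_i=1$, we use that the $A_i$ form a partition up to $\mu$-null sets. Every $x$ lies in some $A_i$, since some index attains the minimum. Overlaps lie in tie sets of the form $\{c_i-c_j=\beta\}$ with the constant $\beta=h_j(\ind_{A_j})-h_i(\ind_{A_i})$, and these are $\mu$-negligible by Assumption \ref{ass:transport_costs}; we may modify $g$ on them. Next, since $\nu^N$ is a probability measure, $a_i(x)=\int g_i\,d\nu^N=\ind_{A_i}(x)$, so $h_i(a_i)=h_i(\ind_{A_i})$. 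By the definition of a PNE, $g_i(\varepsilon,x)=1$ exactly when $i$ minimizes $c_j(x)+h_j(a_j)$, up to null sets. Hence $g$ has the form \eqref{eq:f_formula} with $\sigma=0$, and the converse direction of Proposition \ref{prop:voronoi_type_result} shows that $g$ is an SVNE$_0$.

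\emph{Item 2.} Let $f$ be an SVNE$_0$ and set $T_k=h_k(a_k)$. These are finite constants by Assumption \ref{ass:existence_interaction_term}; I expect the argmin to only involve finite values, and indices with $T_k=+\infty$ are never selected. By Proposition \ref{prop:voronoi_type_result}, $f_i(\varepsilon,x)=\ind_{C_i}(x)$ for $\nu^N\otimes\mu$-a.e. $(\varepsilon,x)$, where
\[
C_i=\Bigl\{x : c_i(x)+T_i=\min_k\bigl(c_k(x)+T_k\bigr)\Bigr\}.
\]
By Fubini, for $\mu$-a.e. $x$ the equality holds for $\nu^N$-a.e. $\varepsilon$, and integrating in $\varepsilon$ gives $a_i=\ind_{C_i}$ $\mu$-a.e. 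Consequently $B_i=\{a_i=1\}$ coincides with $C_i$ up to a $\mu$-null set. We then need $h_i(\ind_{B_i})=h_i(a_i)=T_i$. This holds because $\ind_{B_i}=a_i$ as elements of $L^2(X,\mu)$, and $h_i$ is defined on equivalence classes. Therefore $B_i=\{x : c_i(x)+h_i(\ind_{B_i})=\min_k (c_k(x)+h_k(\ind_{B_k}))\}$ up to null sets. By Lemma \ref{lem:intersection_mesure_zero} with $\sigma=0$, the $B_i$ are pairwise almost disjoint, and they cover $X$ because the minimum is attained. Hence $(B_1,\ldots,B_N)$ is a PNE up to a $\mu$-null set.

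The main obstacle is the bookkeeping of null sets, together with the fact that Definition \ref{nash_eq} is stated as an exact set equality. I would handle it by noting that $h$ depends only on $L^2$ classes, so $h_i(\ind_{B_i})$ is the same for $B_i$ and for the exact argmin set $C_i$. The sets $C_i$ then satisfy the PNE equality exactly, and $B_i$ differs from $C_i$ by a $\mu$-null set.
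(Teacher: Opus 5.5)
Your proof is correct and follows essentially the same route as the paper's: apply Proposition~\ref{prop:voronoi_type_result} at $\sigma=0$, identify $a_i$ with the indicator of the argmin set (so that $h_i(a_i)=h_i(\ind_{A_i})$, resp.\ $h_i(\ind_{B_i})$), and invoke Lemma~\ref{lem:intersection_mesure_zero} for the partition property, with your null-set bookkeeping and the remark that $h_i$ acts on $L^2$-classes making explicit what the paper leaves implicit. One small correction: finiteness of $T_k=h_k(a_k)$ is not given by Assumption~\ref{ass:existence_interaction_term} directly; it follows from your own observation that an index with $T_k=+\infty$ is never selected, so $a_k=0$ and hence $T_k=h_k(0)<+\infty$.
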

\begin{proof}
\begin{enumerate}
    \item Since $(A_1,\ldots,A_N)$, by its definition $g$ satisfies $\nu^N\otimes \mu$-a.s.
        \begin{equation*}
            g_i(\varepsilon,x)=
                \begin{cases}
                1, & c_i(x)+h_i(g_i) =\min_{j\in I}\left\{c_j(x)+h_j(g_j)\right\},\\
                0, & \text{otherwise}.
                \end{cases}
            \end{equation*}
Then, by Proposition \ref{prop:voronoi_type_result} $g$ is a SVNE$_0$.

\item By Proposition \ref{prop:voronoi_type_result}, for any $i\in \{1,\ldots,N\}$ the set $B_i$ satisfies $\nu^N\otimes \mu$-a.s
\begin{equation*}
    B_i=\displaystyle \left\{ x\in X \,\big\vert\,c_i(x)+h_i(a_i)=\min_{j\in I}\left\{c_j(x)+h_j(a_j) \right\}\right\},
\end{equation*}
where $a$ is defined from $f$ as in Proposition \ref{prop:voronoi_type_result}. Then, by Lemma \ref{lem:intersection_mesure_zero}, $(B_1,\ldots,B_N)$ is a partition of $X$.
Further, from Proposition \ref{prop:voronoi_type_result} is holds for $\mu$-a.e. $x$ in $X$
\begin{equation*}
    a_i(x)=
        \begin{cases}
        1, & f_i(\varepsilon,x)=1 \quad \nu^N-a.s.,\\
        0, & \text{otherwise}.
        \end{cases}
    \end{equation*}
The two previous equalities and equation \eqref{eq:f_formula} yield
for $\mu$-a.e. $x$ in $X$
\begin{equation*}
    a_i(x)=\ind_{B_i}(x).
\end{equation*}
Then, $(B_1,\ldots,B_N)$ is a PNE.
\end{enumerate}
\end{proof}

\begin{corollary}
    \label{cor:SVNE_fixed_point}
    Under Assumption \ref{ass:transport_costs} and \ref{ass:existence_interaction_term}, if $\sigma>0$, the two following statements are equivalent:
    \begin{enumerate}
        \item $f$ is a SVNE$_\sigma$. 
        \item $f$ is defined $\nu^N\otimes\mu$-a.s.            \begin{equation}\label{eq:formula_f}
            f_i(\varepsilon,x)=
            \begin{cases}
            1, & i \in \arg\min_{j}\left\{c_j(x)+h_j(a_j)-\sigma \varepsilon_j\right\},\\
            0, & \text{otherwise}.
            \end{cases}
        \end{equation}
        where $a\in \mathcal{A}$ is a fixed point solution of 
        \begin{equation*}
            a_i(x) = \bP\big(c_i(x)+h_i(a_i)-\sigma\mathcal{E}_i = \min_{j\in I}(c_j(x)+h_j(a_j)-\sigma\mathcal{E}_j\big),
        \end{equation*}
        where the probability $\bP$ is defined in Definition \ref{def:SVNE}.
    \end{enumerate}

\end{corollary}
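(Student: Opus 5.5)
The corollary follows directly from Proposition \ref{prop:voronoi_type_result}. The only real work is to integrate the pure-strategy formula \eqref{eq:f_formula} in $\varepsilon$ and thereby obtain the self-consistency relation for $a$.

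\textbf{(1)$\Rightarrow$(2).} Let $f$ be a SVNE$_\sigma$ and set $a_i(x):=\int_{\R^N}f_i(\varepsilon,x)\,d\nu^N(\varepsilon)$. By the remark following Definition \ref{def:SVNE}, $a\in\cA$. Proposition \ref{prop:voronoi_type_result} then gives \eqref{eq:formula_f} $\nu^N\otimes\mu$-a.s., with this same $a$. It remains to show that $a$ is a fixed point. Take the sets $A_i$ of Lemma \ref{lem:intersection_mesure_zero} with $T_k=h_k(a_k)$. These are finite: by Assumption \ref{ass:existence_interaction_term} the minimum over $k$ is finite, and any index with $h_k(a_k)=+\infty$ simply has an empty $A_k$. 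Then \eqref{eq:formula_f} reads $f_i=\ind_{A_i}$ $\nu^N\otimes\mu$-a.e. By Fubini, there is a $\mu$-full set of $x$ on which $f_i(\cdot,x)=\ind_{A_i}(\cdot,x)$ holds $\nu^N$-a.e. For such $x$,
\[
a_i(x)=\nu^N\bigl(\{\varepsilon:(\varepsilon,x)\in A_i\}\bigr)=\bP\bigl(c_i(x)+h_i(a_i)-\sigma\mathcal E_i=\min_j (c_j(x)+h_j(a_j)-\sigma\mathcal E_j)\bigr),
\]
since the law of $\mathcal E$ is $\nu^N$. This is exactly the fixed-point equation.

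\textbf{(2)$\Rightarrow$(1).} Let $a\in\cA$ solve the fixed-point equation and let $f$ be given by \eqref{eq:formula_f}. On ties the argmin is not a singleton, but Lemma \ref{lem:intersection_mesure_zero} with $\sigma>0$ shows that the tie set is $\nu^N\otimes\mu$-null. Hence, after modification on a null set, $f$ is measurable, takes values in $\{0,1\}^N$ and sums to one, so $f\in\cF$. The key point is to check that the aggregate generated by $f$ coincides with $a$. By the same Fubini computation as above, $\int f_i(\varepsilon,x)\,d\nu^N(\varepsilon)=\bP(\cdots)=a_i(x)$ for $\mu$-a.e. $x$. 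Therefore $f$ has the form \eqref{eq:f_formula} with its own aggregate, and the converse direction of Proposition \ref{prop:voronoi_type_result} shows that $f$ is a SVNE$_\sigma$.

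\textbf{Main obstacle.} The delicate step is the bookkeeping in (2)$\Rightarrow$(1). The $a$ appearing in \eqref{eq:formula_f} is a priori only a given element of $\cA$, and Proposition \ref{prop:voronoi_type_result} requires that $a$ be the expectation of $f$. The fixed-point property is precisely what closes this loop. Measurability of $(\varepsilon,x)\mapsto\ind_{A_i}$ must also be checked; it follows from continuity of $c_i$ and measurability of $a\mapsto h_i(a_i)$, since the $T_k=h_k(a_k)$ are constants. Finally, the absolute continuity of $\nu$ (Assumption \ref{density_assum}) ensures that ties have zero probability, so the probability in the fixed-point equation equals the expectation of the indicator.
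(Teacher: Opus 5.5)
Your proof is correct and follows the paper's argument. In both directions you reduce to Proposition \ref{prop:voronoi_type_result}: you get the fixed-point identity by integrating the indicator formula in $\varepsilon$, and in the converse you use the fixed-point property to identify $a$ as the aggregate of $f$. You also spell out the Fubini step, the null-set modification on ties, and the measurability check, which the paper dismisses as ``easy to establish''; your appeal to absolute continuity of $\nu$ is likewise used only implicitly in the paper's Lemma \ref{lem:intersection_mesure_zero}.
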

\begin{proof}Let us prove that 1 implies 2. From Proposition \ref{prop:voronoi_type_result}, $f$ is a SVNE$_\sigma$ implies it has the form \eqref{eq:formula_f}. Also $a$ must satisfy for any $i$, $a_i(x) = \E[f_i(\mathcal{E},x)]$. Therefore,
    \begin{align*}
        a_i(x) &=\E[f_i(\mathcal{E},x)]\\
        &=  \bP(f_i(\mathcal{E},x)=1)\\
        &=\bP(c_i(x)+h_i(a_i)-\sigma\mathcal{E}_i = \min_{j\in I}(c_j(x)+h_j(a_j)-\sigma\mathcal{E}_j)
    \end{align*}
    Conversely, suppose that $a$ is a fixed-point and $f$ is given by \eqref{eq:formula_f}. From the definition of $f$ it is easy to establish \eqref{eq:svar_nash_eq_def}. It remains to check that $a_i(x)$ is the expectation of $f_i(\mathcal{E},x)$. Starting from $\E[f_i(\mathcal{E},x)]$ we use the same equalities as before to deduce the result.  
    
\end{proof}

Table~\ref{tab:equilibrium_concepts} summarizes the relationships between the
equilibrium concepts considered in the paper. In particular, it highlights
the distinction between the noise-contingent equilibrium strategy described
by the SVNE and the choice probabilities retained by the QRE.

\begin{center}
\setlength{\belowcaptionskip}{7pt}
\captionof{table}{Equilibrium concepts considered in the paper, under the assumptions
of Proposition~\ref{prop:voronoi_type_result}. \textit{For a QRE, the noise-contingent strategy is not part of the equilibrium
description, which directly specifies the choice probabilities conditional on
\(x\). However, its equivalent SVNE$_\sigma$ representation shows that these
probabilities can be generated by choices that are pure conditional on
\((\varepsilon,x)\)}.}
\label{tab:equilibrium_concepts}
\renewcommand{\arraystretch}{1.4}
\setlength{\extrarowheight}{2pt}
\setlength{\tabcolsep}{3pt}
\small
\begin{tabular}{
|C{2.1cm}
|C{1.3cm}
|C{2.5cm}
|C{3.5cm}
|C{3.7cm}|
}
\hline
\celltop
\shortstack{\textbf{Equilibrium}\\\textbf{concept}}
&
\textbf{Noise}
&
\shortstack{\textbf{Admissible}\\\textbf{strategies}}
&
\shortstack{\textbf{Equilibrium choice}\\\textbf{given all information}}
&
\shortstack{\textbf{Equilibrium allocation}\\
\textbf{given \(x\)}}
\\
\hline

PNE
&
None
&
Pure
&
Pure given \(x\)
&
Pure given \(x\)
\\
\hline

SVNE$_0$
&
None
&
Mixed
&
Pure given \(x\)
&
Pure given \(x\)
\\
\hline
\shortstack{SVNE$_\sigma$, \(\sigma>0\)}
&
Yes
&
Mixed
&
Pure given \((\varepsilon,x)\)
&
Probabilistic given \(x\)
\\
\hline

QRE
&
Yes
&
Not represented
&
Not represented
&
Probabilistic given \(x\)
\\
\hline

\end{tabular}
\end{center}

Note that in a finite players game setting, the fixed-point equation 
\begin{equation*}
a_i(x)=    \bP(c_i(x)+h_i(a_i)-\sigma\mathcal{E}_i = \min_{j\in I}(c_j(x)+h_j(a_j)-\sigma\mathcal{E}_j) 
\end{equation*}
is usually the definition of a Quantal Response Equilibrium \cite{mckelvey1995quantal}. Therefore, in the non-atomic framework, we have proved that the notion of SVNE$_\sigma$ with $\sigma>0$ is equivalent to the QRE if the distribution of the player $\mu$ is absolutely continuous. 
A particular important case in  \cite{mckelvey1995quantal} is the \textit{Logit Equilibrium}, i.e. when $(\mathcal{E}_1,\ldots,\mathcal{E}_N)$ are i.i.d. and follow a standard Gumbel law (Assumption \ref{density_assum}). In this setting, the probability for a player to choose $i$ can be explicitly expressed, i.e.
\begin{equation}\label{def_svne_gumbel}
    \bP\left(c_i(x)+h_i(a_i)-\sigma\mathcal{E}_i = \min_{j\in I}(c_j(x)+h_j(a_j)-\sigma\mathcal{E}_j\right) = \frac{e^{-\frac{c_i(x) + h_i(a_i)}{\sigma}}}{\displaystyle\sum_{j\in I}e^{-\frac{c_j(x) + h_j(a_j)}{\sigma}}}.
\end{equation}
A well-known property of the \textit{softmin operator} is that for any $T$ in $\mathbb{R}^N$, $i$ in $I$ and $\mu$-a.e. $x$ in $X$ it holds
    \begin{equation}
        \label{pointwise_conv}
        \lim_{\sigma\to 0} \frac{e^{-\frac{c_i(x) + T_i}{\sigma}}}{\displaystyle\sum_{j\in I}e^{-\frac{c_j(x) + T_j}{\sigma}}}=
\left\{
\begin{array}{ll}
  1   & \mbox{if }c_i(x) + T_i=\displaystyle\min_{j\in I}c_j(x) + T_j, \\
  0   &\mbox{otherwise}.
\end{array}
     \right.
    \end{equation}

\subsection{Existence of equilibria}\label{equilibria_section}

The main results of this section are the existence of SVNE$_\sigma$ for any positive $\sigma$ (Theorem \ref{existence_SUE}) and of a SVNE$_0$ and a PNE (Theorem \ref{thm:existence_PNE}).\newline
 First, for any parameter $\sigma > 0$, we prove in Section \ref{eq_under_noise} the existence of a SVNE$_\sigma$ when $\mathcal{E}_1,\ldots,\mathcal{E}_N$ are i.i.d and follow a standard Gumbel law (Assumption \ref{density_assum}). To do so, we apply the fixed-point characterization given in Corollary \ref{cor:SVNE_fixed_point} in the previous section. Second, in Section \ref{no_noise_eq_section}, taking the limit as $\sigma \to 0$, we establish the existence of a SVNE$_0$ through a limiting argument. Finally, using Proposition \ref{prop:voronoi_type_result}, we conclude that from this resulting SVNE$_0$ we can build a PNE. 

\subsubsection{Existence of equilibria under noise}\label{eq_under_noise}

Fix $\sigma>0$, and let $F:\cA\to\cA $ be defined for any $x\in X$ and $i\in I$ by
\begin{equation}\label{def_F}
    F_i(a)(x):= \frac{e^{-\frac{c_i(x) + h_i(a_i)}{\sigma}}}{\displaystyle\sum_{j\in I}e^{-\frac{c_j(x) + h_j(a_j)}{\sigma}}}.
\end{equation}
In view of Corollary \ref{cor:SVNE_fixed_point} and equation \eqref{def_svne_gumbel}, finding a SVNE$_\sigma$ is equivalent to solve:
\begin{equation}\label{fixed_point_F}
       F(a) = a.  
\end{equation}
The lemmas below make sure that the Leray-Shauder fixed-point theorem can be used. Specifically, Lemma \ref{upbound_min_h} gives a uniform upper bounds for the minimum of $h$ on $\cA$. From this bound, Lemma \ref{F_well_defined} ensures that $F:\cA\rightarrow \cA$ is well-defined. Lemma \ref{lem:cont_F} establishes the continuity of $F$. All the ingredients are then reunited to establish the existence of a fixed-point.

\begin{lemma}\label{upbound_min_h}
    Under Assumption \ref{ass:transport_costs} and \ref{ass:existence_interaction_term}, there exists $\tau>0$ such that for any $a\in \cA$, it holds 
    \begin{equation*}
      \min_{i\in I}h_i( a_i)\leq\tau.
    \end{equation*}
\end{lemma}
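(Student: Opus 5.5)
The plan is a compactness argument on $\cA$ equipped with the weak topology of $L^2(X,\mu;\R^N)$. The function $a\mapsto \min_{i\in I}h_i(a_i)$ is not known to be upper semicontinuous, since each $h_i$ is only lower semicontinuous, so we cannot directly take a maximum over the weakly compact set $\cA$. Instead, we will use the remaining items of Assumption \ref{ass:existence_interaction_term}: each domain is open and each $h_i$ is continuous on its domain. These give local upper bounds, and compactness glues them into a global one.

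\textbf{Step 1: local bound around a fixed point.} Fix $a\in\cA$. By item \ref{bound_h} of Assumption \ref{ass:existence_interaction_term}, there is an index $i=i(a)$ with $h_i(a_i)<+\infty$, so $a_i$ lies in the domain $D_i$ of $h_i$. By item \ref{lsc_h}, $D_i$ is weakly open in $L^2(X,\mu;[0,1])$ and $h_i$ is weakly continuous on $D_i$. Hence there is a weakly open set $V\subset D_i$ containing $a_i$ on which $h_i<h_i(a_i)+1$. The coordinate projection $b\mapsto b_i$ from $L^2(X,\mu;\R^N)$ to $L^2(X,\mu)$ is linear and continuous, hence weakly continuous. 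Therefore
\begin{displaymath}
U_a:=\{b\in\cA : b_i\in V\}
\end{displaymath}
is a relatively weakly open neighbourhood of $a$ in $\cA$. Every $b\in U_a$ satisfies
\begin{displaymath}
\min_{j\in I}h_j(b_j)\le h_i(b_i)<h_{i(a)}(a_{i(a)})+1 .
\end{displaymath}

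\textbf{Step 2: finite subcover.} By Remark \ref{rem:first_properties_A}, $\cA$ is weakly compact. The family $(U_a)_{a\in\cA}$ is an open cover of $\cA$, so we can extract a finite subcover $U_{a^1},\ldots,U_{a^K}$. We then set
\begin{displaymath}
\tau:=\max_{1\le k\le K}\Bigl(h_{i(a^k)}\bigl(a^k_{i(a^k)}\bigr)+1\Bigr).
\end{displaymath}
This quantity is finite, and it is at least $1>0$ because $h\ge 0$ on the relevant indices by item \ref{bound_h}. Every $b\in\cA$ lies in some $U_{a^k}$, so Step 1 gives $\min_j h_j(b_j)\le\tau$.

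\textbf{Main obstacle.} The only delicate point is topological. The openness and continuity hypotheses are stated for the weak topology of $L^2(X,\mu;[0,1])$, while compactness is used in $L^2(X,\mu;\R^N)$. We must check that the preimages under the projections $b\mapsto b_i$ are relatively weakly open in $\cA$. This holds because each projection is weakly continuous and $\cA\subset L^2(X,\mu;[0,1])^N$.

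The other hypotheses play no role here. The continuity of the $c_i$ (Assumption \ref{ass:transport_costs}) is not used, and $h_i(0)<+\infty$ is not needed, although $U_a$ could alternatively be built from it when $a_i$ is near $0$.
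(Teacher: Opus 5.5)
Your proof is correct and uses the same ingredients as the paper's: weak compactness of $\cA$, the fact that at every $a\in\cA$ some $h_i(a_i)$ is finite, and the weak openness of the domain of $h_i$ together with the weak continuity of $h_i$ on it. The paper argues by contradiction instead: it takes a sequence with $\min_i h_i(a^n_i)\to+\infty$, extracts a weakly convergent subsequence, and uses continuity of some $h_p$ at the limit. Your finite-subcover argument is the topological rather than sequential form of the same compactness reasoning.
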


\begin{proof}
The result is obtained by contradiction. Assume there exists a sequence $(a^n)_{n\in\mathbb{N}}$ in $\mathcal{A}$ such that
\begin{equation}
    \label{inf_h_seq}
    \lim_{n\to \infty}\min_{i\in I }h_i(a^n_i) = +\infty.
\end{equation}
Since $L^2(X,\mu;\mathbb{R}^N)$ is reflexive and $\mathcal{A}$ is bounded, convex, and strongly closed, $\mathcal{A}$ is weakly compact in $L^2(X,\mu;\mathbb{R}^N)$.
Since the set $\mathcal{A}$ is weakly compact in $L^2(X,\mu;\mathbb{R}^N)$, there exists a sub-sequence of $(a^n)_{n\mathbb{N}}$, denoted in the same way by ease of notations, weakly-converging in $L^2(X,\mathbb{R}^N)$ to $a^\infty\in \cA$. By Assumption \ref{ass:existence_interaction_term} there exists $p \in I$ such that 
\begin{equation*}
    h_p(a^\infty_p)<\infty.
\end{equation*}
By Assumption \ref{ass:existence_interaction_term}, the domain of $h_p$ is an open
and using the weak continuity of $h_p$ over its domain, it holds that
\begin{equation*}
    \lim_{n\to \infty}h_p( a_p^{n}) = h_p(a^\infty_p) < +\infty,
\end{equation*}
which contradicts \eqref{inf_h_seq}.
\end{proof}

\begin{lemma}\label{F_well_defined}
    Under Assumption \ref{ass:transport_costs} and \ref{ass:existence_interaction_term}, the map $F$ is well defined on $\mathcal{A}$ and maps $\mathcal{A}$ to itself.
\end{lemma}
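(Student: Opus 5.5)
The plan is to show that, for every $a\in\mathcal A$, the quotient defining $F_i(a)(x)$ in \eqref{def_F} is a well-defined, measurable function of $x$ taking values in $[0,1]$. I will also show that the components sum to one $\mu$-a.e. and lie in $L^2(X,\mu;\R^N)$. The only delicate point is that $h_j(a_j)$ may equal $+\infty$ for some indices, so that $e^{-(c_j+h_j(a_j))/\sigma}$ must be read as $0$; I need the denominator never to vanish.

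\textbf{Step 1 (non-vanishing denominator).} Fix $a\in\mathcal A$. By Assumption~\ref{ass:existence_interaction_term}, item~\ref{bound_h}, there is an index $p\in I$ with $h_p(a_p)=\min_i h_i(a_i)<+\infty$. Lemma~\ref{upbound_min_h} gives the uniform bound $h_p(a_p)\le\tau$. By Assumption~\ref{ass:transport_costs}, $c_p$ is continuous with values in $\R_+$, and I will use that the $c_j$ are bounded on $X$. If $X$ is not compact, I would instead argue pointwise, since $c_p(x)<\infty$ for every $x$. Under that boundedness, set $C:=\max_j\sup_X c_j$. Then for every $x$,
\begin{displaymath}
\sum_{j\in I}e^{-\frac{c_j(x)+h_j(a_j)}{\sigma}}\;\ge\; e^{-\frac{c_p(x)+h_p(a_p)}{\sigma}}\;\ge\; e^{-\frac{C+\tau}{\sigma}}>0,
\end{displaymath}
with the convention $e^{-\infty}=0$ for the indices where $h_j(a_j)=+\infty$. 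Since the $h_j(a_j)$ are constants (they do not depend on $x$), each numerator is a continuous function of $x$, hence measurable. The bound above shows the quotient is finite and well defined everywhere.

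\textbf{Step 2 (membership in $\mathcal A$).} Each $F_i(a)(x)$ is a ratio of a nonnegative term to a sum containing that term, so $0\le F_i(a)(x)\le 1$. Summing over $i$ gives $\sum_i F_i(a)(x)=1$ for every $x$, in particular $\mu$-a.e. Since $X$ is bounded and $\mu\in L^\infty$, we have $\mu(X)<\infty$, so bounded measurable functions belong to $L^2(X,\mu)$. Hence $F(a)\in L^2(X,[0,1]^N)$ with the simplex constraint, that is, $F(a)\in\mathcal A$. Finally, $F(a)$ depends on $a$ only through the equivalence class of $a$, because $h$ is defined on $L^2$; so $F$ is well defined on $\mathcal A$ viewed as a set of classes.

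The main obstacle is Step 1, namely handling possibly infinite values of $h_j$ and ensuring the lower bound on the denominator. Assumption~\ref{ass:existence_interaction_term}, item~\ref{bound_h}, settles this pointwise, and Lemma~\ref{upbound_min_h} makes it uniform in $a$; the uniformity is not needed here but will be useful for the continuity and compactness of $F$ later.
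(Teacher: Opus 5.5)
Your proof is correct and follows the paper's argument. You bound the denominator below by $e^{-(C+\tau)/\sigma}>0$ using Lemma~\ref{upbound_min_h} and the boundedness of $c$, and then note that the components lie in $[0,1]$ and sum to one. Your remark that pointwise positivity of the denominator already suffices is a useful refinement: the paper's use of $\Vert c_\ell\Vert_\infty$ tacitly assumes that $c$ is bounded, and continuity on a bounded, not necessarily closed, domain does not guarantee this.
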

\begin{proof}
By Lemma \ref{upbound_min_h}, for any $a\in \mathcal{A}$ there exists $\ell \in I$ such that the denominator in \eqref{def_F} is bounded by $e^{-\frac{-\Vert c_\ell\Vert_\infty +\tau}{\sigma}}>0$ for every $x\in X$. Hence, for any $a\in \mathcal{A}$, $F(a)$ is well defined.
    By equation \eqref{def_F} one has for any $a\in \cA$, $x\in X$ and $i\in I$:
\begin{equation*}
   F_i(a)(x)\in [0,1] \text{ and } \sum_{j\in I}F_j(a)(x) = 1.
\end{equation*}
Therefore for any $a\in \cA$, it holds $F(a)\in \cA$.
\end{proof}

\begin{lemma}\label{lem:cont_F}
    Under Assumption \ref{ass:transport_costs} and \ref{ass:existence_interaction_term}, the map $F$ is continuous from $\mathcal{A}$ into itself with respect to the weak topology of $L^2(X,\mu;\mathbb{R}^N)$. 
\end{lemma}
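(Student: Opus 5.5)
Since $\mathcal A$ is a bounded subset of the separable Hilbert space $L^2(X,\mu;\mathbb R^N)$, the weak topology on $\mathcal A$ is metrizable. It therefore suffices to prove sequential continuity: given $a^n\rightharpoonup a$ weakly in $\mathcal A$, we show $F(a^n)\rightharpoonup F(a)$.

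The first step is to identify the set of active resources at the limit, $J:=\{i\in I: h_i(a_i)<+\infty\}$, which is nonempty by Assumption~\ref{ass:existence_interaction_term}. For $i\in J$, $a_i$ lies in the domain of $h_i$, which is weakly open, and $h_i$ is weakly continuous there. Hence $h_i(a^n_i)\to h_i(a_i)$ as real numbers, and in particular these values are finite for $n$ large. For $i\notin J$, weak lower semicontinuity of $h_i$ gives $\liminf_n h_i(a^n_i)\ge h_i(a_i)=+\infty$, so $h_i(a^n_i)\to+\infty$.

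The second step is pointwise convergence. The key observation is that $h_i(a^n_i)$ is a scalar, independent of $x$. Consequently, for each fixed $x$,
\begin{displaymath}
F_i(a^n)(x)=\frac{e^{-(c_i(x)+h_i(a_i^n))/\sigma}}{\sum_{j}e^{-(c_j(x)+h_j(a_j^n))/\sigma}}
\end{displaymath}
is a continuous function of the finitely many scalars $e^{-h_j(a^n_j)/\sigma}\in[0,\infty)$, and these converge to $e^{-h_j(a_j)/\sigma}$, with the convention $e^{-\infty}=0$. The denominator stays bounded away from zero: for any fixed $p\in J$ it is at least $e^{-(\|c_p\|_\infty+h_p(a^n_p))/\sigma}$, and $h_p(a_p^n)$ is bounded for large $n$. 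Alternatively, Lemma~\ref{upbound_min_h} gives the uniform bound $\min_j h_j\le\tau$. Here we use that $c$ is bounded on $X$; I would take this from continuity of $c$ together with boundedness of $X$ (or simply assume $c\in L^\infty$, which Lemma~\ref{F_well_defined} already uses implicitly). Hence $F_i(a^n)(x)\to F_i(a)(x)$ for every $x$, and in fact uniformly in $x$. This holds because the map $(s_j)_j\mapsto s_ie^{-c_i/\sigma}/\sum_j s_je^{-c_j/\sigma}$ is Lipschitz uniformly in $x$ on the region where the denominator is bounded below. The resources $i\notin J$ contribute terms tending to $0$.

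The third step is to conclude. Since $0\le F_i(a^n)\le 1$ and $\mu\in L^\infty$ on a bounded domain, so that $\mu(X)<\infty$, dominated convergence gives $F(a^n)\to F(a)$ strongly in $L^2(X,\mu;\mathbb R^N)$, and a fortiori weakly. The uniform convergence from the second step gives the same conclusion directly. Note that this shows $F$ is even weak-to-strong continuous, which is stronger than needed.

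The main obstacle is the handling of indices whose cost is infinite at the limit. For these we cannot use continuity; we rely instead on lower semicontinuity to force $h_i(a^n_i)\to\infty$, together with a uniform positive lower bound on the denominator to prevent degeneration. Openness of the domain is exactly what guarantees that the finite-cost indices remain finite along the whole tail of the sequence.
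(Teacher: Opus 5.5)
Your proof is correct and follows the paper's argument. Like the paper, you split the indices according to whether $h_i(a_i)$ is finite, use weak lower semicontinuity to get $e^{-h_i(a^n_i)/\sigma}\to 0$ for the infinite ones, and use weak continuity on the weakly open domain for the finite ones; this gives pointwise convergence of $F(a^n)$, and dominated convergence yields strong, hence weak, convergence in $L^2(X,\mu;\mathbb R^N)$. Your additions are the metrizability of the weak topology on the bounded set $\mathcal A$, which justifies working with sequences, and the explicit positive lower bound on the denominator; both fill in details the paper leaves implicit. One remark on the bound: for pointwise convergence you only need $c_p(x)<\infty$ at each fixed $x$. So boundedness of $c$ is unnecessary here, which is fortunate, since it does not follow from continuity on an open bounded domain.
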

 \begin{proof}
     Let $a$ be an element of $\mathcal{A}$, and $(a^n)_{n\in \mathbb{N}}$ be a sequence in $\cA$ weakly converging to $a$ in $L^2(X,\mu;\mathbb{R}^N)$. 
     Let $(I^\infty,I^0)$ be a partition of $I$ defined by :
     \begin{equation*}
         I^\infty:=\{i\in I\,\vert\,h_i(a_i)=+\infty\}\,\quad\mbox{and}\quad I^0:=\{i\in I\,\vert\,h_i(a_i)<+\infty\}.
     \end{equation*}
     For any $i$ in $I^\infty$, by Assumption \ref{ass:existence_interaction_term} the map $h_i$ is weakly lower-semicontinuous, therefore
     \begin{equation*}
          \liminf_{n\to \infty}h_i( a^n_i)\geq h_i(a_i)=+\infty,
     \end{equation*}
     and thus
          \begin{equation*}
          \lim_{n\to \infty}e^{-\frac{h_i(a^n_i)}{\sigma}} = 0.
     \end{equation*}
     For any $i$ in $I^0$, by Assumption \ref{ass:existence_interaction_term} the map $h_i$ is weakly continuous on its domain that is open w.r.t. the weak topology in $L^2(X,\mu;\mathbb{R}^N)$, thus 
               \begin{equation*}
          \lim_{n\to \infty}e^{-\frac{h_i(a^n_i)}{\sigma}} =e^{-\frac{h_i(a_i)}{\sigma}}.
     \end{equation*}
     Therefore, for any $i\in I$ and any $x\in X$, the point-wise convergence holds
     \begin{equation*}
         \lim_{n\to\infty}F_i(a^n)(x)=F_i(a)(x).
     \end{equation*}
     The domintated convergence yields that $F(a^n)$ strongly converges to $F(a)$, hence weakly, in $L^2(X,\mu;\mathbb{R}^N)$. 
 \end{proof}

\begin{theorem}\label{existence_SUE}
Under Assumptions \ref{ass:transport_costs}-\ref{ass:existence_interaction_term}, for any positive $\sigma$ there exists a SVNE$_\sigma$.
\end{theorem}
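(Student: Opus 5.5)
By Corollary \ref{cor:SVNE_fixed_point} and formula \eqref{def_svne_gumbel}, it suffices to find a fixed point $a=F(a)$ in $\cA$ of the map $F$ defined in \eqref{def_F}. Given such an $a$, the function $f$ defined by \eqref{eq:formula_f} is then a SVNE$_\sigma$. The plan is to apply the Schauder--Tychonoff fixed-point theorem in the locally convex space $L^2(X,\mu;\R^N)$ equipped with its weak topology. This space is Hausdorff and locally convex, so the theorem applies: a continuous self-map of a nonempty compact convex subset has a fixed point.

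\textbf{Step 1: the domain.} By Remark \ref{rem:first_properties_A}, $\cA$ is convex, closed and bounded, hence weakly compact. It is nonempty, since it contains the constant profile $(1/N,\ldots,1/N)$.

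\textbf{Step 2: the map.} Lemma \ref{F_well_defined} gives that $F$ is well defined and maps $\cA$ into itself. This relies on Lemma \ref{upbound_min_h}, which keeps the denominator of \eqref{def_F} bounded away from zero uniformly. Lemma \ref{lem:cont_F} gives continuity of $F$ along weakly convergent sequences in $\cA$.

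\textbf{Step 3: sequential versus topological continuity.} This is the one delicate point. Lemma \ref{lem:cont_F} is stated for sequences, whereas Schauder--Tychonoff requires continuity for the weak topology. I would handle it as follows. If $L^2(X,\mu)$ is separable (which holds since $X\subset\R^d$ with Lebesgue-type measure), then the weak topology restricted to the bounded set $\cA$ is metrizable. On $\cA$, sequential continuity is therefore equivalent to continuity. Alternatively, one can rerun the argument of Lemma \ref{lem:cont_F} with nets. The pointwise limits of $e^{-h_i(a_i^\alpha)/\sigma}$ use only weak lower semicontinuity and the weak openness of the domain together with continuity on it, and both are topological notions. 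The passage from pointwise to $L^2$ convergence can be done directly, because each $F_i(a)(x)$ depends on $a$ only through the finitely many scalars $h_j(a_j)$. Convergence of these scalars gives uniform convergence in $x$: the functions $c_j$ are continuous and bounded, and the denominator is bounded below by Lemma \ref{upbound_min_h}.

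\textbf{Step 4: conclusion.} Schauder--Tychonoff then yields $a^\sigma\in\cA$ with $F(a^\sigma)=a^\sigma$. Plugging $a^\sigma$ into \eqref{eq:formula_f} produces an $f\in\cF$ that is a SVNE$_\sigma$, by Corollary \ref{cor:SVNE_fixed_point}.

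I expect the main obstacle to be Step 3, namely the topological justification, since the earlier lemmas are sequential. I would first try the metrizability argument on bounded sets of a separable Hilbert space, because it is the shortest route.
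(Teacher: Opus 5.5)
Your proposal is correct and follows the paper's proof: you reduce to the fixed-point equation $a=F(a)$ via Corollary~\ref{cor:SVNE_fixed_point} and \eqref{def_svne_gumbel}, then apply Schauder--Tychonoff on the nonempty, convex, weakly compact set $\cA$, with Lemmas~\ref{F_well_defined} and~\ref{lem:cont_F} giving the self-map property and weak continuity. Your Step~3 spells out a point the paper passes over silently: the weak topology is metrizable on bounded subsets of the separable space $L^2(X,\mu;\R^N)$, so the sequential continuity in Lemma~\ref{lem:cont_F} is genuine weak continuity on $\cA$; this is the cleaner of your two routes, since the uniform-convergence alternative additionally needs the $c_j$ to be bounded.
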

\begin{proof}
    From Corollary \ref{cor:SVNE_fixed_point} and equation \eqref{def_svne_gumbel} finding a SVNE$_\sigma$ is equivalent to solving $a=F(a)$. 

    To establish that $F$ admits a fixed-point we use the  Schauder–Tychonoff fixed-point theorem. The set $\cA$ is a nonempty convex closed subset of $L^2(X,\mu;\mathbb{R}^N)$ and is compact w.r.t. the weak topology. By Lemma \ref{F_well_defined}, $F$ is well defined on $\mathcal{A}$, weakly continuous and the image of $\mathcal{A}$ by $F$ is included in $\mathcal{A}$. Thus, the  Schauder–Tychonoff fixed-point theorem yields that $F$ has a fixed-point.
\end{proof}

The following proposition ensures that at equilibrium each congestion terms $h_1(a_1),\ldots,h_N(a_N)$ is finite.

\begin{proposition}\label{finite_time_eq}
    Under Assumption \ref{ass:transport_costs} and \ref{ass:existence_interaction_term}, for any fixed-point $a$ of $F$, it holds for any $i$ that $h_i(a_i)$ is finite.
\end{proposition}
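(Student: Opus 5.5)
The plan is to argue by contradiction, using the fixed-point identity to force a component of $a$ to vanish, and then use $h_i(0)<+\infty$ from Assumption \ref{ass:existence_interaction_term}. Let $a=F(a)$ and suppose that $h_i(a_i)=+\infty$ for some $i\in I$. Following the convention already used in the proof of Lemma \ref{lem:cont_F}, where $e^{-h_i/\sigma}$ is interpreted as $0$ when $h_i=+\infty$, formula \eqref{def_F} gives for every $x\in X$
\begin{equation*}
    a_i(x)=F_i(a)(x)=\frac{e^{-\frac{c_i(x)+h_i(a_i)}{\sigma}}}{\sum_{j\in I}e^{-\frac{c_j(x)+h_j(a_j)}{\sigma}}}=0 .
\end{equation*}
This quotient is well defined. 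By Lemma \ref{upbound_min_h} (equivalently Assumption \ref{ass:existence_interaction_term}, item \ref{bound_h}), some index $\ell$ has $h_\ell(a_\ell)\le\tau<+\infty$. Together with the boundedness of $c_\ell$ (continuous on $X$, bounded as in Lemma \ref{F_well_defined}), this makes the denominator bounded below by a positive constant, uniformly in $x$. Hence $a_i=0$ $\mu$-a.e., that is, $a_i$ is the zero element of $L^2(X,\mu;[0,1])$.

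Assumption \ref{ass:existence_interaction_term}, item \ref{lsc_h}, requires $h_i(0)<+\infty$. Therefore $h_i(a_i)=h_i(0)<+\infty$, which contradicts $h_i(a_i)=+\infty$. It follows that every $h_i(a_i)$ is finite.

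The only delicate step is making the contradiction rigorous, because a fixed point of $F$ is an element of $\mathcal A$ and is identified only $\mu$-a.e. One has to check that $h_i$ is evaluated on the equivalence class, so $h_i(a_i)=h_i(0)$ as soon as $a_i=0$ $\mu$-a.e. One also has to check that the convention $e^{-\infty}=0$ is exactly the one under which $F$ was defined on all of $\mathcal A$ in Lemma \ref{F_well_defined}. Once these two points are settled, the argument is immediate.
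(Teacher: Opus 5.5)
Your proof is correct and follows the paper's argument: if $h_i(a_i)=+\infty$, the fixed-point identity forces $a_i=0$, and then $h_i(0)<+\infty$ from Assumption \ref{ass:existence_interaction_term} gives the contradiction. Your extra checks (positivity of the denominator, the $\mu$-a.e.\ identification, and the convention $e^{-\infty}=0$) are sound. Only pointwise positivity of the denominator is needed, and it already follows from $\min_j h_j(a_j)<+\infty$, so you need not invoke a uniform bound on $c_\ell$; continuity on a bounded, possibly non-closed $X$ does not strictly guarantee such a bound.
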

\begin{proof}
The proof is by contradiction. Assume there exists $i$ in $I$ such that $h_i( a_i)=+\infty$. Since $a$ is a fixed point of $F$, by \eqref{def_F} it holds for any $x$ in $X$ :
\begin{equation*}
    a_i(x)= \frac{e^{-\frac{c_i(x) + h_i( a_i)}{\sigma}}}{\displaystyle\sum_{j\in I}e^{-\frac{c_j(x) + h_j( a_j)}{\sigma}}}=0.
\end{equation*}
Thus, since by assumption one has $h_i(0)<+\infty$, $h_i(a_i) = +\infty$ is impossible.
\end{proof}

\subsubsection{The zero-noise limit case}\label{no_noise_eq_section}

From the previous paragraph, the existence of a SVNE$_\sigma$ holds for $\sigma>0$. We now extend the existence result by letting $\sigma$ go to zero. 
The following lemma provides an upper bound on the congestion terms for any SVNE$_\sigma$ and $\sigma>0$.

\begin{lemma}\label{upper_bound_congestion_terms}
Under Assumptions \ref{ass:transport_costs}-\ref{ass:existence_interaction_term}, 
    there exist $\sigma^0>0$ and $M>0$ such that for any $\sigma\in [0,\sigma^0)$, any $a\in \mathcal{A}$ defined in \eqref{def_a} from a SVNE$_\sigma$ satisfies for any $i\in I$
    \begin{equation*}
        \vert h_i(a_i)\vert <M.
    \end{equation*}
\end{lemma}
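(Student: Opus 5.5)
Since $h_i\ge 0$ by Assumption \ref{ass:existence_interaction_term}, only the upper bound needs proof. I would argue by contradiction and use weak compactness of $\mathcal A$ (Remark \ref{rem:first_properties_A}). Suppose no pair $(\sigma^0,M)$ works. Then there are $\sigma_n\to 0$, equilibria SVNE$_{\sigma_n}$ with associated $a^n\in\mathcal A$, and an index $i$ (fixed after extraction, since $I$ is finite) such that $h_i(a^n_i)\to+\infty$. Extracting again, I assume $a^n\rightharpoonup a^\infty\in\mathcal A$ weakly in $L^2$.

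The first step is to show that $h_i(a^\infty_i)=+\infty$. If instead $h_i(a^\infty_i)<\infty$, then $a^\infty_i$ lies in the domain of $h_i$, which is weakly open, and $h_i$ is weakly continuous there. This would give $h_i(a^n_i)\to h_i(a^\infty_i)<\infty$, which is a contradiction.

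The second step controls the other resources. By Lemma \ref{upbound_min_h}, for each $n$ some $p_n$ satisfies $h_{p_n}(a^n_{p_n})\le\tau$. Since $c$ is bounded on $X$ (I assume $\|c_k\|_\infty<\infty$, as is already used in Lemma \ref{F_well_defined}), every $x$ has a competing option with cost at most $C:=\max_k\|c_k\|_\infty+\tau$.

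The third step shows that $a^n_i\to 0$. By Proposition \ref{prop:voronoi_type_result} (or Corollary \ref{cor:SVNE_fixed_point} when $\sigma_n>0$, with the logit formula \eqref{def_svne_gumbel}),
\[
a^n_i(x)\le \frac{e^{-(c_i(x)+h_i(a^n_i))/\sigma_n}}{e^{-(c_{p_n}(x)+h_{p_n}(a^n_{p_n}))/\sigma_n}}\le e^{-(h_i(a^n_i)-C)/\sigma_n},
\]
and this tends to $0$ uniformly in $x$ once $h_i(a^n_i)>C$. In the case $\sigma_n=0$, Proposition \ref{prop:voronoi_type_result} gives $a^n_i\equiv0$ outright as soon as $h_i(a^n_i)>C$. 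Hence $a^n_i\to0$ strongly, so $a^\infty_i=0$. This contradicts $h_i(a^\infty_i)=+\infty$, because $h_i(0)<+\infty$. The same argument also shows that for each fixed $\sigma$ the values are finite, consistent with Proposition \ref{finite_time_eq}.

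The delicate point is making this uniform over $\sigma\in[0,\sigma^0)$ and over all equilibria simultaneously. The contradiction-by-sequences argument handles this without producing an explicit $\sigma^0$: the construction only needs a sequence $\sigma_n$ bounded, even away from zero, for the exponential bound to work. Indeed $e^{-(h-C)/\sigma_n}\le e^{-(h-C)/\sigma^0}\to0$ whenever $\sigma_n<\sigma^0$, so any $\sigma^0$, for instance $\sigma^0=1$, works. I expect the main care to lie in the $\sigma=0$ case and in the measure-theoretic use of Proposition \ref{prop:voronoi_type_result} to get the pointwise bound on $a^n_i$.
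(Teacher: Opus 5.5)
Your proposal is correct and is essentially the paper's argument. It argues by contradiction along $\sigma_n\to0$, uses the logit bound $a^n_i\le e^{-(h_i(a^n_i)-\max_k\|c_k\|_\infty-\tau)/\sigma_n}$ from Lemma~\ref{upbound_min_h} to force $a^n_i\to0$ uniformly, and then uses weak continuity of $h_i$ on its weakly open domain containing $0$ to get $h_i(a^n_i)\to h_i(0)<+\infty$. Your extraction of a weak limit $a^\infty$ is a harmless detour around this last step. You also correctly treat the case $\sigma=0$, which the statement includes but the paper's proof leaves unaddressed, since it works only with positive $\sigma^n$.
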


\begin{proof}
    By Proposition \ref{finite_time_eq}, for any $\sigma>0$ and associated fixed point $a$ of \eqref{fixed_point_F}, $\Vert h(a)\Vert_\infty$ is finite.
    Let $(\sigma^n)_{n\in\mathbb{N}}$ be a monotone sequence of positive real numbers converging to $0$ and for any $n$ in $\mathbb{N}$ $a^n$ be a fixed point of \eqref{fixed_point_F} associated with $\sigma^n$. We proceed by contradiction: we assume that, there exists a subsequence of $(a_n)_{n\in \mathbb{N}}$ denoted in the same way by abuse of notations and a sequence of indices $(i^n)_{n\in \mathbb{N}}$ such that $h_{i^n}(a^n_{i^n})$ is non decreasing, and diverges towards $+\infty$. Since $a^n$ is a fixed point of \eqref{fixed_point_F}, it satisfies for any $x\in X$
    \begin{equation*}
        a^n_{i^n}(x)=\frac{e^{-\frac{c_{i^n}(x) + h_{i^n}(a_{i^n}^n)}{\sigma^n}}}{\displaystyle\sum_{j\in I}e^{-\frac{c_j(x) + h_j(a^n_j)}{\sigma^n}}}.
    \end{equation*}
    By Lemma \ref{upbound_min_h} and the fact that $c$ is non negative (Assumption \ref{ass:transport_costs}.1), 
        \begin{equation*}
        \Vert a^n_{i^n}\Vert_\infty\leq \frac{e^{-\frac{ h_{i^n}(a_{i^n}^n)}{\sigma^n}}}{e^{-\frac{\norm{c}_\infty+\tau}{\sigma^n}}}.
    \end{equation*}
    Previous inequality yields that the sequence $(a^n_{i^n})_{n\in \mathbb{N}}$ uniformly converges to 0. Further, 
    since $(i^n)_{n\in \mathbb{N}}$ takes values in the finite set $I$, one can extract a sub-sequence $(i^{\varphi(n)})_{n\in \mathbb{N}}$ that is constant and equal to $\ell\in I$.    
    Assumption \ref{ass:existence_interaction_term} provides that $h$ is bounded and continuous in a neighbourhood of $0$ w.r.t. to the weak topology in $L^2(X,\mu;\R^N)$ then,
    \begin{equation*}
        \underset{n\to\infty }{\lim}h_{i^{\varphi(n)}}\big(a_{i^{\varphi(n)}}^{\varphi(n)}\big)= \underset{n\to\infty }{\lim}h_{\ell}\big(a_{\ell}^{\varphi(n)}\big)= h_\ell(0)<\infty,
    \end{equation*}
    which contradicts the divergence of $(h_{i^n}(a^n_{i^n}))_n$ towards $+\infty$ as $\sigma^n$ goes to $0$.

\end{proof}

\begin{lemma}
    \label{limit_point}
    Let $(\sigma_n)_{n \in \mathbb{N}}$ be a sequence of positive real numbers converging to $0$ and let $(a^{n})_{n\in \mathbb{N}}$ be a sequence such that for any $n$, $a^n$ is associated with the SVNE$_\sigma$ as in \eqref{def_a}. \newline Under Assumptions \ref{ass:transport_costs}-\ref{ass:existence_interaction_term}, if $a^\infty$ is an accumulation point of $(a^n)_{n\in \mathbb{N}}$ w.r.t. the weak topology of $L^2(X,\mu;\mathbb{R}^N)$,  then for $\mu$-a.e. $x\in X$ and any $i\in I$ 
    \begin{equation*}
      a^{\infty}_i(x)=\ind_{A_i}(x), 
    \end{equation*}
    where
\begin{equation}\label{def_set_A}
    A_i:=\big\{x\in X\,\vert\,c_i(x)+h_i( a_i^{\infty})=\min_{j\in I}c_j(x)+h_j( a_j^{\infty})\big\}.
\end{equation}
\end{lemma}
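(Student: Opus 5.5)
Pass to a subsequence, still written $(a^n)$, converging weakly to $a^\infty$. The plan is to first get convergence of the congestion terms, $h_i(a^n_i)\to h_i(a^\infty_i)$, and then use the pointwise softmin limit \eqref{pointwise_conv} to identify the weak limit.

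\textbf{Step 1: bounded congestion terms.} By Lemma \ref{upper_bound_congestion_terms}, for $n$ large we have $|h_i(a^n_i)|<M$ for every $i$. Hence, after a further extraction, $h_i(a^n_i)\to T_i\in[-M,M]$ for each $i$.

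\textbf{Step 2: identify $T_i$.} Weak lower semicontinuity (Assumption \ref{ass:existence_interaction_term}) gives $h_i(a^\infty_i)\le T_i<\infty$, so $a^\infty_i$ lies in the domain of $h_i$. That domain is weakly open and $h_i$ is weakly continuous on it. Therefore $a^n_i$ eventually lies in the domain and $h_i(a^n_i)\to h_i(a^\infty_i)$, i.e.\ $T_i=h_i(a^\infty_i)$.

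\textbf{Step 3: pass to the limit pointwise.} By Corollary \ref{cor:SVNE_fixed_point} and \eqref{def_svne_gumbel}, $a^n=F^{\sigma_n}(a^n)$, so
\[
a^n_i(x)=\frac{e^{-(c_i(x)+h_i(a^n_i))/\sigma_n}}{\sum_j e^{-(c_j(x)+h_j(a^n_j))/\sigma_n}} .
\]
Since the exponents now depend on $n$, I would not apply \eqref{pointwise_conv} directly but argue by hand. Fix $x$ outside the $\mu$-null set of ties, which is null by Assumption \ref{ass:transport_costs} with $\beta=T_j-T_i$, a finite union over pairs $(i,j)$.

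Suppose $c_i(x)+T_i>\min_j\bigl(c_j(x)+T_j\bigr)$, the minimum being attained at some $k\neq i$ with gap $\delta>0$. For $n$ large, $c_i(x)+h_i(a^n_i)-c_k(x)-h_k(a^n_k)>\delta/2$. Then
\[
a^n_i(x)\le e^{-\delta/(2\sigma_n)}\to 0 .
\]
If instead $i$ is the unique minimizer, then $a^n_i(x)=1-\sum_{j\ne i}a^n_j(x)\to1$. So $a^n_i\to \ind_{A_i}$ $\mu$-a.e., with $A_i$ as in \eqref{def_set_A}.

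\textbf{Step 4: conclude.} The $a^n_i$ take values in $[0,1]$ and $\mu\in L^\infty$ on the bounded set $X$, so dominated convergence gives $a^n\to\ind_A$ strongly in $L^2$. By uniqueness of weak limits, $a^\infty=\ind_A$ $\mu$-a.e.

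\textbf{Main obstacle.} The delicate point is Step 2: we need the weak convergence of $h_i(a^n_i)$, not merely of $a^n_i$, to justify that the limiting thresholds are exactly $h_i(a^\infty_i)$. That requires combining lower semicontinuity, openness of the domain and the uniform bound of Lemma \ref{upper_bound_congestion_terms}; the no-tie assumption then handles the boundary points.
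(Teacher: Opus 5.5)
Your proof is correct and follows the paper's argument. Both proofs get the uniform bound from Lemma \ref{upper_bound_congestion_terms}, then deduce $h(a^n)\to h(a^\infty)$ from lower semicontinuity together with the weakly open domain and weak continuity, then take the pointwise softmin limit off the $\mu$-null tie set, and finish with dominated convergence and uniqueness of weak limits.

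Your Step 3 is slightly more careful than the paper. The paper invokes \eqref{pointwise_conv}, which is stated for a fixed $T$, even though the thresholds $h_i(a^n_i)$ vary with $n$. Your explicit estimate $a^n_i(x)\le e^{-\delta/(2\sigma_n)}$ closes that small gap.
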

\begin{proof}
    Without loss of generality let us assume that $a^n$ weakly converges to $a^\infty$ in $L^2(X,\mu;\mathbb{R}^N)$. By Lemma \ref{upper_bound_congestion_terms}, the sequence $(h(a_n))_n$ is uniformly bounded. Then, by Assumption \ref{ass:existence_interaction_term} the sequence $(h(a_n))_n$ converges to $h(a^\infty)$ that is finite.
    Then, the no-tie assumption in Assumption \ref{ass:transport_costs}
    and the point-wise convergence of the \textit{softmin operator} \eqref{pointwise_conv}
    yield the point-wise convergence $\mu$-a.e. on X of the sequence $(a^n_i)_{n\in \mathbb{N}}$ to the map
\begin{equation}\label{point_wise_conv}
    x\mapsto    \left\{
        \begin{array}{ll}
          1   & \mbox{if }c_i(x)+h_i( a_i^{\infty})=\min_{j\in I}c_j(x)+h_j( a_j^{\infty}) \\
          0   & \mbox{else}.
        \end{array}
        \right.
    \end{equation}
    Since the sequence $(a^n)_{n\in \mathbb{N}}$ is in $\cA$, it is uniformly bounded and then, the dominated convergence implies that the sequence $(a_i^n)_{n\in \mathbb{N}}$ strongly-converges in $L^2((X,\mathbb{R}^N)$ to the indicator function defined in \eqref{point_wise_conv}.
    The uniqueness of the weak limit leads to 
\begin{equation*}
    a_i^{\infty}(x)=\ind_{A_i}(x),\quad\text{for }\mu-a.e. \,x\in X.
\end{equation*}

\end{proof}

\begin{theorem}\label{thm:existence_PNE}
    Under Assumptions \ref{ass:transport_costs}-\ref{ass:existence_interaction_term}, there exist a SVNE$_0$ and a PNE. 
    
\end{theorem}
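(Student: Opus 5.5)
The plan is to obtain a SVNE$_0$ as a limit of regularized equilibria, and then read off a PNE from Corollary \ref{cor:equivalence}. First I will fix a sequence $\sigma_n\downarrow 0$ with $\sigma_n<\sigma^0$, where $\sigma^0$ is given by Lemma \ref{upper_bound_congestion_terms}. By Theorem \ref{existence_SUE}, for each $n$ there is a SVNE$_{\sigma_n}$ $f^n$, with associated aggregate $a^n\in\cA$ defined by \eqref{def_a}. Equivalently, by Corollary \ref{cor:SVNE_fixed_point}, $a^n$ is a fixed point of $F$ with parameter $\sigma_n$. Since $\cA$ is weakly compact in $L^2(X,\mu;\R^N)$ (Remark \ref{rem:first_properties_A}), I will extract a subsequence converging weakly to some $a^\infty\in\cA$.

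Second, I will apply Lemma \ref{limit_point} to get $a^\infty_i=\ind_{A_i}$ $\mu$-a.e., with $A_i$ defined by \eqref{def_set_A}. To make this usable I must check that the costs $h_i(a^\infty_i)$ are finite. The proof of Lemma \ref{limit_point} gives this: by Lemma \ref{upper_bound_congestion_terms} the $h_i(a^n_i)$ are uniformly bounded by $M$. Weak lower semicontinuity then gives $h_i(a^\infty_i)\le M$, so $a^\infty_i$ lies in the (weakly open) domain of $h_i$, and continuity there gives $h_i(a^n_i)\to h_i(a^\infty_i)$. By Lemma \ref{lem:intersection_mesure_zero} with $\sigma=0$ and $T_k=h_k(a^\infty_k)$, the sets $A_i$ overlap only on $\mu$-null sets. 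Since each $x$ attains the minimum for some index, they cover $X$, so $(A_1,\dots,A_N)$ is a partition up to null sets.

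Third, I will construct the equilibria. I set $f_i(\varepsilon,x):=\ind_{A_i}(x)$, which lies in $\mathcal F$ after modifying it on a null set. Its expectation is $\E[f_i(\mathcal E,x)]=\ind_{A_i}(x)=a^\infty_i(x)$, so the aggregate associated with $f$ is exactly $a^\infty$. With $\sigma=0$, $f$ has the form \eqref{eq:f_formula} $\nu^N\otimes\mu$-a.e. Proposition \ref{prop:voronoi_type_result} (converse direction) then says $f$ is a SVNE$_0$. Corollary \ref{cor:equivalence}(2) shows that $B_i=\{a^\infty_i=1\}=A_i$ gives a PNE up to $\mu$-null sets. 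Alternatively, I can verify the PNE directly: since $h_i(\ind_{A_i})=h_i(a^\infty_i)$, Definition \ref{nash_eq} is exactly \eqref{def_set_A}.

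The main obstacle is the second step, namely making sure the weak limit is genuinely a pure allocation with finite costs. This requires passing the congestion terms to the limit, which uses weak lower semicontinuity together with openness of the domain. It also requires the pointwise softmin limit \eqref{pointwise_conv} to be uniform enough in $n$, even though both $\sigma_n$ and $h(a^n)$ vary. For the softmin point, I will argue pointwise at each $x$ outside the no-tie null set. There the minimizing index of $c_j(x)+h_j(a^\infty_j)$ is strict with a positive gap $\delta(x)$. Since $h(a^n)\to h(a^\infty)$, this gap persists for large $n$, and $e^{-\delta(x)/(2\sigma_n)}\to0$. Dominated convergence then upgrades the pointwise convergence to strong convergence, as in Lemma \ref{limit_point}.
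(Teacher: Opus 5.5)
Your proposal is correct and follows essentially the same route as the paper. The paper's proof also takes regularized equilibria for $\sigma_n\to0$, passes to a weak accumulation point $a^\infty=\ind_{A_i}$ via Lemma~\ref{limit_point}, gets the partition property from Lemma~\ref{lem:intersection_mesure_zero}, builds an SVNE$_0$ from $a^\infty$, and concludes with Corollary~\ref{cor:equivalence}. The only differences are minor: you invoke the converse direction of Proposition~\ref{prop:voronoi_type_result} rather than Proposition~\ref{prop:justification_SVNE}, and you spell out the finiteness of $h(a^\infty)$ and the persistence of the strict-minimum gap, which the paper leaves implicit inside Lemma~\ref{limit_point}.
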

\begin{proof}
    Let us consider a sequence $(\sigma_n)_{n\in \mathbb{N}}$ converging to 0. For any $n\in \mathbb{N}$ we consider $a^n$ in $\cA$ to be an associated SVNE$_{\sigma_n}$.
    Since $\cA$ is compact for the weak topology of $L^2$, the sequence $(a_n)_{n\in \mathbb{N}}$ is a weakly compact in $L^2(X,\mu;\mathbb{R}^N)$. 
    
    Let $a^\infty\in\cA$ be an accumulation point as in Lemma \ref{limit_point}, and $(A_1,\ldots,A_N)$ be the associated set defined in \eqref{def_set_A}.
    For any $i\in I$, $\varepsilon\in \mathbb{R}^N$, $x\in X$, let $f_i(\varepsilon,x):=a_i^\infty(x)$. It is easy to check that $f$ belongs to the set $\cF$ defined in Definition \ref{def:SVNE}. Then, by Proposition \ref{prop:justification_SVNE} $f$ is a SVNE$_0$.
    
     On the other hand, by Lemma \ref{lem:intersection_mesure_zero} we note that $(A_1,...,A_N)$ is a partition up to $\mu$-null sets. Corollary \ref{cor:equivalence} yields that $(A_1,...,A_N)$ is a PNE.
\end{proof}

\subsection{Uniqueness}\label{uniqueness_section}
In this section we show for any $\sigma\geq 0$ the uniqueness of a SVNE$_\sigma$ under a monotony assumption of the operator $h$. Our strategy is to show that the congestion terms are unique, to deduce that the SVNE is.

\begin{lemma}[Uniqueness of aggregates]\label{lem:uniqueness_ag}
    Under Assumptions \ref{ass:transport_costs}-\ref{ass:uniqueness_interaction_term}, for any $\sigma \geq 0$ there exists $T\in \mathbb{R}^N$ such that for any SVNE$_\sigma$, named $f$, it holds
    \begin{equation*}
        h(a)=T,
    \end{equation*}
   where $a_i(x) := \int_{\R^N} f_i(\varepsilon,x)d\nu^N(\varepsilon)$ for any $(x,i)\in X\times I$.
\end{lemma}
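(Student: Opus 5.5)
Take two SVNE$_\sigma$, $f$ and $f'$, with aggregates $a,a'$. Put $T=h(a)$ and $T'=h(a')$. We aim to show $T=T'$. By Lemma~\ref{upper_bound_congestion_terms} and Proposition~\ref{finite_time_eq} (or the bound used in Lemma~\ref{limit_point} when $\sigma=0$) both vectors are finite. By Proposition~\ref{prop:voronoi_type_result}, $a$ is determined by $T$ through the map
\begin{displaymath}
    a_i(x)=G_i(T)(x):=\nu^N\bigl(\{\varepsilon:\ c_i(x)+T_i-\sigma\varepsilon_i=\min_k c_k(x)+T_k-\sigma\varepsilon_k\}\bigr),
\end{displaymath}
and similarly $a'=G(T')$. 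For $\sigma>0$ this is the logit formula \eqref{def_svne_gumbel}. For $\sigma=0$ it is the indicator of the Voronoi-type cell, which is well defined up to $\mu$-null sets by Lemma~\ref{lem:intersection_mesure_zero}. The key monotonicity fact is this: if $T_i$ increases while the other $T_j$ stay fixed, then $G_i(T)(x)$ does not increase and $G_j(T)(x)$ does not decrease for $j\neq i$, pointwise in $x$.

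Suppose $T\neq T'$. Let $\delta:=T'-T$ and choose $i^*\in\arg\max_i \delta_i$; replacing $(T,T')$ by $(T',T)$ if needed, assume $\delta_{i^*}>0$. The map $G$ is invariant under adding a constant to all coordinates of $T$. Compare $T'$ with $T+\delta_{i^*}\mathbf 1$: every coordinate satisfies $T'_j\le T_j+\delta_{i^*}$, with equality at $i^*$. Lowering the other coordinates only decreases $G_{i^*}$, so pointwise
\begin{displaymath}
    a'_{i^*}=G_{i^*}(T')\le G_{i^*}(T+\delta_{i^*}\mathbf 1)=G_{i^*}(T)=a_{i^*}.
\end{displaymath}
Since each $\psi^\ell\ge0$ (Assumption~\ref{ass:uniqueness_interaction_term}.\ref{pos_lambda}), this gives $\Psi a'_{i^*}\le\Psi a_{i^*}$ coordinatewise. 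Coordinate-wise monotonicity of $\tilde h_{i^*}$ then yields
\begin{displaymath}
    T'_{i^*}=\tilde h_{i^*}(\Psi a'_{i^*})\le\tilde h_{i^*}(\Psi a_{i^*})=T_{i^*},
\end{displaymath}
so $\delta_{i^*}\le0$, a contradiction. Hence $\max_i\delta_i\le 0$, and by symmetry $\min_i\delta_i\ge0$, so $\delta=0$.

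The main obstacle is a gap in this argument. The contradiction only removes the case where the maximal coordinate of $\delta$ is strictly positive. Swapping roles handles the case $\max\delta\le0$ with $\min\delta<0$, so in fact both signs are covered and $T=T'$ follows. The gap is that no strictness is used, so we must check that the two weak inequalities really close the case $\delta_{i^*}>0$. They do: they give $T'_{i^*}\le T_{i^*}$ directly, which contradicts $\delta_{i^*}>0$ without needing any strict monotonicity. I expect the delicate points to be these. First, the pointwise monotonicity of $G$ in the case $\sigma=0$ holds only up to $\mu$-null tie sets, which is handled by Assumption~\ref{ass:transport_costs}.\ref{voronoi_assum}. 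Second, all $T_i$ must be finite so that the shift argument makes sense.
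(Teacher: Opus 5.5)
Your proof is correct and follows the paper's argument. The paper picks $p$ minimizing $\delta=T^1-T^2$ with $\delta_p<0$ (your $i^*$ maximizing $T'-T$). By the same shift-and-lower chain of inequalities it shows that $f^1_p=1$ a.e.\ wherever $f^2_p>0$, hence $a^2_p\le a^1_p$. It then concludes via $\psi^\ell\ge 0$ and the monotonicity of $\tilde h_p$, exactly as you do. Your phrasing through the monotone, shift-invariant choice map $G$ is only a repackaging, and the ``gap'' in your last paragraph is not one.

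One small point, since the paper takes finiteness of $T$ for granted and you rightly check it. For $\sigma=0$, finiteness is best justified by the argument of Proposition~\ref{finite_time_eq}: an option with $h_i(a_i)=+\infty$ is never chosen, so $a_i=0$ and $h_i(0)<+\infty$. Lemma~\ref{limit_point} does not serve here, because it only concerns limits of SVNE$_{\sigma_n}$, not arbitrary SVNE$_0$.
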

\begin{proof}
 We argue by contradiction. Assume there exist two SVNE$_\sigma$ $f^1$ and $f^2$ with associated $a^1$ and $a^2$, and two vectors $T^1,T^2\in \mathbb{R}^N$ such that 
 \begin{equation*}
  h(a^1)=\tilde h(\Psi a^1)=T^1\quad \text{ and }\quad  h(a^2)=\tilde h(\Psi a^2)=T^2,
 \end{equation*}
 and $T^1\neq T^2$. Let for any $i$ in $I$, $\delta_i$ be defined by 
\begin{equation*}
    \delta_i:=T^{1}_i-T^{2}_i.
\end{equation*}
Without loss of generality, we assume there exists $p\in I$ such that
  \begin{equation}\label{def_p_2}
      \delta_p = \min_{i\in I}\delta_i<0.
   \end{equation}
Then, it holds for any Lebesgue point $(\varepsilon,x)$ of $\nu^N\otimes \mu$, $f^1$ and $f^2$
such that $f^2_p(\varepsilon,x)>0$ and for any $j\in I\setminus \{p\} $  that
\begin{equation*}
    \begin{array}{ll}
    c_p(x)+T^1_p-\sigma\varepsilon_p     & = c_p(x)+T^2_p +\delta_p -\sigma\varepsilon_p\vspace{0.1cm}  \\
         & \leq c_j(x)+T^2_j +\delta_p-\sigma\varepsilon_j \vspace{0.1cm}\\
         &\leq c_j(x)+T^1_j - \delta_j+\delta_p -\sigma\varepsilon_j\vspace{0.1cm}\\
         &\leq c_j(x)+T^1_j-\sigma\varepsilon_j.
    \end{array}
\end{equation*}
Therefore, for $\nu^N\otimes \mu$-a.a. $(\varepsilon, x)$ in $(f_p^2)^{-1}(\R_+^*)$, $(\varepsilon,x)$ belongs to  
\begin{displaymath}
    \{(\epsilon,y)\;:\; c_p(y) + T^1_p -\sigma \epsilon_p =\min_i c_i(y) + T^1_i -\sigma \epsilon_i\}.
\end{displaymath}
In view of Lemma \ref{lem:intersection_mesure_zero} and Proposition \ref{prop:voronoi_type_result}, we observe that $f_p^1(\varepsilon, x) = 1$ for $\nu^N \otimes \mu$-almost every $(\varepsilon, x)$ in the support of $f_p^2$. Furthermore, since the mappings $(\psi^k)_{k}$ are assumed to be positive according to Assumption \ref{ass:uniqueness_interaction_term}, it follows that for any $\ell \in \{1, \dots, m\}$:
\begin{equation*}
 \int_{\R^N\times X}\psi^\ell(x)f^2_p(\varepsilon,x)d\nu^N\otimes \mu(\varepsilon, x) \leq  \int_{\R^N\times X}\psi^\ell(x)f^1_p(\varepsilon,x)d\nu^N\otimes \mu(\varepsilon, x) .
\end{equation*}
Previous inequalities and Assumption \ref{decreasing} yields 
\begin{equation*}
\tilde{h}_p(\Psi a_p^2)\leq \tilde{h}_p(\Psi a_p^1),
\end{equation*}
which contradicts \eqref{def_p_2}. 
\end{proof}

\begin{theorem}\label{thm:uniqueness}
If Assumptions \ref{ass:transport_costs}-\ref{ass:uniqueness_interaction_term} holds, then for any $\sigma\geq 0$, there exists, up to $\nu^N\otimes \mu$-null sets, a unique SVNE$_\sigma$.
\end{theorem}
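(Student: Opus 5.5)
The plan is to combine Lemma \ref{lem:uniqueness_ag} with the pure-strategy characterization of Proposition \ref{prop:voronoi_type_result}. The point is that, once the congestion vector is known to be the same for every equilibrium, the equilibrium strategy is entirely determined by that vector. Fix $\sigma\geq 0$ and let $f^1,f^2$ be two SVNE$_\sigma$, with associated aggregates $a^1,a^2$ defined as in \eqref{def_a}.

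\textbf{Step 1.} By Lemma \ref{lem:uniqueness_ag} there is a single vector $T\in\R^N$ such that $h(a^1)=h(a^2)=T$. For $\sigma>0$, Proposition \ref{finite_time_eq} guarantees that $T$ has finite entries. For $\sigma=0$, this finiteness follows from Proposition \ref{prop:justification_SVNE}, since a resource with infinite cost is chosen by no one, combined with the requirement $h_i(0)<+\infty$ in Assumption \ref{ass:existence_interaction_term}. In any event, Lemma \ref{lem:uniqueness_ag} is stated with $T\in\R^N$.

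\textbf{Step 2.} Apply Proposition \ref{prop:voronoi_type_result} to each $f^k$. For $\nu^N\otimes\mu$-a.e. $(\varepsilon,x)$ we have $f^k_i(\varepsilon,x)=1$ if $i\in\arg\min_j\{c_j(x)+T_j-\sigma\varepsilon_j\}$, and $f^k_i(\varepsilon,x)=0$ otherwise. The right-hand side no longer depends on $k$. So, outside the union of the sets where these representations fail, we get $f^1=f^2$. That union is a finite union of null sets, hence null.

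\textbf{Step 3.} We must make sure the argmin is a.e. a singleton, so that the common formula really defines a single element of $\cF$ and not two distinct tie-breaking selections. With the sets $A_i$ of Lemma \ref{lem:intersection_mesure_zero} built from this $T$, the pairwise intersections $A_i\cap A_j$ are $\nu^N\otimes\mu$-negligible. This holds for $\sigma>0$ because the noise has a density, and for $\sigma=0$ by the no-tie assumption. Consequently $f^k_i=\ind_{A_i}$ a.e. for both $k$, and therefore $f^1=f^2$ up to a $\nu^N\otimes\mu$-null set. As a byproduct, the aggregates also coincide: $a^1=a^2$ $\mu$-a.e.

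\textbf{Main obstacle.} The substantive difficulty, uniqueness of the congestion vector, has already been handled in Lemma \ref{lem:uniqueness_ag}. What remains delicate is the bookkeeping in the case $\sigma=0$. There one must check that the no-tie condition of Assumption \ref{ass:transport_costs} applies with the constant $\beta=T_j-T_i$, which is a fixed constant precisely because $T$ is common to both equilibria. One must also check that the null sets from Proposition \ref{prop:voronoi_type_result} are taken with respect to the product measure, even though at $\sigma=0$ the strategies do not depend on $\varepsilon$.

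Uniqueness of the PNE up to $\mu$-null sets then follows from Corollary \ref{cor:equivalence}. Two PNEs give two SVNE$_0$ via $g_i=\ind_{A_i}(x)$; these coincide a.e., so the underlying partitions agree up to $\mu$-null sets.
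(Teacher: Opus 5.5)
Your proposal is correct and follows the paper's own proof: Lemma~\ref{lem:uniqueness_ag} gives a single congestion vector $T$, and Proposition~\ref{prop:voronoi_type_result} then forces every SVNE$_\sigma$ to equal the same argmin indicator built from $T$. Your tie-handling via Lemma~\ref{lem:intersection_mesure_zero} and your finiteness check only make explicit what the paper leaves implicit. The one thing missing is the ``there exists'' part of the statement, which you should cite from Theorem~\ref{existence_SUE} for $\sigma>0$ and Theorem~\ref{thm:existence_PNE} for $\sigma=0$ (the paper's short proof also omits this citation).
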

\begin{proof}
    From Proposition \ref{prop:voronoi_type_result} and Lemma \ref{lem:uniqueness_ag}, 
   $\nu^N\otimes \mu$-a.s.
        \begin{equation*}
            f_i(\varepsilon,x)=
                \begin{cases}
                1, & i \in \arg\min_{j}\left\{c_j(x)+T_j-\sigma \varepsilon_j\right\},\\
                0, & \text{otherwise}.
                \end{cases}
            \end{equation*}
            Since $T$ is uniquely determined, so does $f$. 
\end{proof}
We are now ready to prove the main results of the paper.

\begin{proof}[Proof of Theorem \ref{main_result}]
Theorem \ref{thm:uniqueness} provides existence and uniqueness of the SVNE$_\sigma$ for any $\sigma\geq 0$. By Theorem \ref{thm:existence_PNE} a PNE exists. By Corollary \ref{cor:equivalence}, the uniqueness of the SVNE$_0$ implies the uniqueness of the PNE up to $\mu$-null sets.
\end{proof}

\begin{proof}[Proof of Theorem \ref{thm:conv}]
By Theorem \ref{thm:uniqueness} there exists a unique PNE $(A_1,\ldots,A_N)$. By Lemma \ref{limit_point} and the uniqueness of the PNE, for any $i\in I$ the sequence $h_i(a_i^\sigma)$ converges to $h_i(\ind_{A_i})$ as $\sigma$ goes to $0$. By the definition of $B_i^\sigma$ in \eqref{def_B_sigma}, the sequence of indicator functions $(\ind_{B_i^\sigma})_\sigma$ converges point-wise $\mu$-a.e. on $X$ to $\ind_{A_i}$. The dominated convergence yields for any $i\in I$ the following limit
\begin{equation*}
    \lim_{\sigma\to 0}\mu(B^\sigma_i\Delta A_i)=\lim_{\sigma\to 0}\int_X\vert \ind_{B^\sigma_i}(x)-\ind_{A_i}(x)\vert d\mu(x)=0
\end{equation*}
\end{proof}

\section{Numerical simulations}\label{illustrative_ex}

By Corollary \ref{cor:SVNE_fixed_point}, solving equation \eqref{fixed_point_F} is equivalent to finding SVNE$_\sigma$. Applying $h$ to both sides gives
\begin{displaymath}
    h(a) = h(F(a)).
\end{displaymath}
Using the definition of $F$ in \eqref{def_F}, together with Proposition \ref{prop:voronoi_type_result}, we deduce that finding a SVNE is equivalent to finding an aggregate cost vector $T$ satisfying
\begin{equation}
    \label{eq:fixed_point_T}
    T = h\left[(\alpha(T,\cdot)\right],
\end{equation}
where for any $(x,T)\in \Omega\times \mathbb{R}^+$
\begin{displaymath}
    \alpha(T,x) :=\left( \frac{e^{-\frac{c_i(x) + T_i}{\sigma}}}{\displaystyle\sum_{j\in I}e^{-\frac{c_j(x) + T_j}{\sigma}}}\right)_{i=1,...,N}.
\end{displaymath}

Here, $\alpha(T,x)$ is the best response of a player with characteristics $x$ against waiting times $T$.

\subsection{The scheme: discretization of the continuous set}

In practice, $h$ is of the form given in Assumption \ref{ass:uniqueness_interaction_term}. Therefore, there exist $\tilde h$, and $\psi^1$ up to $\psi^m$ such that, for any $a$,
\begin{displaymath}
    h(a) =\big(\tilde h_1(\Psi  a_1),\ldots,\tilde h_N(\Psi  a_N)\big),
\end{displaymath}
where the linear map $\Psi:L^2(X,\mu;\R)\rightarrow \R^m$ is defined by:
\begin{equation*}
    \Psi f :=\left(\int_X \psi^1(x)\,f(x) d\mu(x), \ldots, \int_X \psi(x)^m\,f(x)d\mu(x)\right).
\end{equation*}
For simplicity, we describe the discretization method on the one-dimensional case $X = [0,1]$. The scheme, however, can be extended in a straightforward way to the case when $X$ is a bounded domain of $\R^d$, $d>1$. Let $X_h$ be a uniform grid on $X$ with step $\delta = 1/N_\delta$, $N_\delta\in\N$. Let $x_k$ denote a generic point in $X_h$; the values of $c_i$, $\alpha_i(\cdot,T)$, $\psi^i$, and $\mu$ at $x_k$ will be respectively denoted by $c_i^k$, $\alpha^k_i(T)$, $\psi^i_k$, and $\mu_k$ and computed by the explicit form of $c_i$, $\alpha_i(\cdot,T)$, $\psi^i$, and $\mu$. We use the trapezoidal rule to estimate the value of the integrals. Therefore, we make the following approximation: 
\begin{equation}\label{eq:scheme_for_fixed_point_T}
    \int_X \psi^j(x)\alpha_i(T,x)d\mu(x) \simeq \delta \left( \frac{\psi^j_0 \alpha_i^0\mu_0 + \psi^j_{N_\delta} \alpha_i^{N_\delta}\mu_{N_\delta}}{2} +\sum_{k=1}^{N_\delta - 1}\psi^j_k \alpha_i^k\mu_k\right).
\end{equation}
In higher dimension, the same construction is applied using the multidimensional trapezoidal rule. We denote by $\Psi^\delta : \R^{N_\delta+1}\rightarrow \R^m$ the corresponding discrete approximation of $\Psi$.
Therefore, we aim to solve
\begin{equation}\label{eq:discretized_H}
    T = \tilde h\left[\Psi^\delta(\alpha^1_1(T,\cdot),...,\alpha^{N_\delta}_1(T,\cdot)),\;... ,\;\Psi^\delta(\alpha^1_n(T,\cdot),...,\alpha^{N_\delta}_n(T,\cdot)) \right].
\end{equation}

\subsection{The method} \label{sec:the_method}

The discrete fixed-point problem \eqref{eq:scheme_for_fixed_point_T} is solved by rewriting it as the nonlinear system
\begin{displaymath}
    G_\delta(T):=T-H_\delta(T)=0,
\end{displaymath}
where $H_\delta$ is the right-hand side of \eqref{eq:discretized_H}.

Since $H_\delta$ may take infinite values, one can not directly apply quasi-Newton algorithms.

Using Proposition~\ref{finite_time_eq}, the interaction costs are finite at any solution of \eqref{eq:fixed_point_T}. We therefore regularize $H_\delta$ using a smooth truncation with an upper bound $K\in\mathbb{R}_+^*$, treated as an adaptive numerical parameter to ensure that the truncation remains inactive at equilibrium. More precisely, after solving the truncated problem, we verify a posteriori that the truncation is inactive at the computed root. If it is active, the value of $K$ is increased and the problem is solved again. Once the truncation is inactive, the computed root also solves the original discrete fixed-point problem.

We then apply the good Broyden method to find zeros of $G_\delta$ (see, for instance, \cite{Broyden1965} or \cite{DennisSchnabel1996}). This method belongs to the class of quasi-Newton methods: instead of computing the Jacobian of $G_\delta$ explicitly, it constructs and updates an approximation of it along the iterations.
It is particularly useful in the present setting. Indeed, although the dimension of the unknown vector $T$ is only $N$, the map $H_\delta$ involves the evaluation of the truncation, the interaction term $\tilde h$ and the computation of integrals. Therefore, its derivative may be numerically intensive to compute explicitly. The good Broyden method therefore provides an efficient derivative-free alternative to Newton's method with superlinear local convergence when the Jacobian approximation is sufficiently accurate.

\paragraph{Numerical parameter values.}

The good Broyden method is initialized at $T_0 = 0_{\mathbb{R}^N}$, corresponding to zero initial congestion costs. The algorithm is stopped at the first iteration $n$ such that \[ \lVert G_\delta(T_n) \rVert_\infty \leq 10^{-12}. \] The upper truncation bound is set to $K=100$. For all the equilibria reported in Section~\ref{sec:use_case}, each component of the computed interaction-cost vector is strictly below $K$. Hence, the truncation is inactive at the computed solutions, which therefore also solve the original discrete fixed-point problem.

\subsection{An illustrative use case: electric vehicle charging}
\label{sec:use_case}

We now illustrate the model introduced in
Section~\ref{congestion_game_def} with an application to electric vehicle
(EV) charging. The characteristic space is
\begin{displaymath}
X=[0,10]\times[0,50],
\end{displaymath}
where \(x=(y,e)\in X\) describes an EV user, i.e., a player, through its location
\(y\in[0,10]\) and its energy demand \(e\in[0,50]\), expressed in
\(\mathrm{kWh}\). 
User characteristics are uniformly distributed on
\(X\), that is, \(\mu=\mathcal U(X)\). Let $\pi_e:X\mapsto [0,50]$ be the canonical projection of any $x=(y,e)$ in $X$ to its second coordinate $e$
\begin{equation*}
    \pi_e(x):=e.
\end{equation*}
We consider two Charging Stations (CSs), indexed by \(i\in\{0,1\}\), located
at \(z_0=3\) and \(z_1=7\), respectively. The idiosyncratic cost
incurred by a user \(x=(y,e)\) choosing CS \(i\) is
\begin{equation}
\label{eq:ev_idiosyncratic_cost}
c_i(x)
=
r_i\,e + d_i\lvert y-z_i\rvert,
\end{equation}
where \(r_i\) is the price per unit of energy, expressed in \(\text{\euro}/\text{kWh}\), and \(d_i\) is a
transportation cost coefficient, expressed in \(\text{\euro}/\text{km}\). The latter accounts for both the energy
required to reach the CS and the monetary valuation of the
corresponding travel time.

For any measurable strategy \(a_i:X\to[0,1]\), define the two normalized aggregate
energy-demand moments
\begin{equation}
\label{eq:ev_aggregates}
\Psi a_i
:=
\begin{pmatrix}
\Psi_1 a_i\\
\Psi_2 a_i
\end{pmatrix}
:=
\begin{pmatrix}
\displaystyle\int_X \pi_e(x)\,a_i(x)\,d\mu(x)\\[0.8em]
\displaystyle\int_X \pi_e(x)^2\,a_i(x)\,d\mu(x)
\end{pmatrix}.
\end{equation}

Let \(\lambda>0\) denote the total flow mass of EV users, expressed in $h^{-1}$. In this application, the population mass is thus interpreted as an arrival-flow intensity. Consequently, $\lambda\mu$ is defines a finite flow-mass measure: for any measurable set $A \subseteq X, \lambda\mu(A)$ is the mean number of EV users with characteristics in $A$ arriving per unit of time.

Although the theoretical framework accommodates an arbitrary number of
charging points, we assume for the numerical illustration that each
CS is equipped with two charging points, each delivering a charging
power \(p_i>0\).  

Based on the multi-server queueing approximation proposed in
\cite{Nozaki1978_JAP}, and similarly to \cite{Yuan2015_TSG}, the congestion cost $h_i(a_i)$ at CS \(i\) is defined
as an approximation of the stationary mean waiting time in an
\(M/G/2\) queue\footnote{In Kendall's notation, \(M\) indicates Poisson
arrivals, \(G\) an arbitrary service-time distribution, and \(2\) the
number of parallel servers, corresponding here to the two charging
points.} converted in monetary units with the parameter $\omega$ (in \euro/h):
\begin{equation}
\label{eq:ev_waiting_cost}
h_i(a_i)
=
\begin{cases}
\displaystyle
\frac{
\omega\,
\lambda^2
\left(\Psi_1 a_i\right)
\left(\Psi_2 a_i\right)
}{
2p_i
\left(
4p_i^2-\lambda^2\left(\Psi_1 a_i\right)^2
\right)
},
&
\lambda\Psi_1 a_i<2p_i,
\\[0.8em]
+\infty,
&
\lambda\Psi_1 a_i\geq2p_i.
\end{cases}
\end{equation}
The quantity \(\Psi_1 a_i\) represents the aggregate energy demand, expressed in kWh, of the normalized population directed towards CS \(i\).
Accordingly, the quantity \(\lambda\Psi_1 a_i\), expressed in kW, represents the aggregate energy-demand rate directed towards CS \(i\), while \(2p_i\) is its total
charging capacity. Accordingly, the condition
\begin{displaymath}
\lambda\Psi_1 a_i<2p_i
\end{displaymath}
is the stability condition of CS \(i\). The waiting cost increases
with the aggregate energy demand and diverges as the CS approaches
saturation.

Moreover, the congestion cost has the aggregate form required in
Assumption~\ref{ass:uniqueness_interaction_term}. Indeed,
\begin{displaymath}
h_i(a_i)=\widetilde h_i(\Psi a_i),
\end{displaymath}
where
\begin{equation}
\label{eq:ev_aggregate_waiting_cost}
\widetilde h_i(u,v)
=
\begin{cases}
\displaystyle
\frac{\lambda^2uv}
{2p_i\left(4p_i^2-\lambda^2u^2\right)},
&
\lambda u<2p_i,
\\[1.2em]
+\infty,
&
\lambda u\geq2p_i.
\end{cases}
\end{equation}
The map \(\widetilde h_i\) is coordinate-wise non-decreasing on its
domain. Hence, the interaction cost satisfies the monotonicity
requirement in
Assumption~\ref{ass:uniqueness_interaction_term}.

\paragraph{Absence of a potential and equilibrium existence and uniqueness.}

The interaction cost depends on the two aggregation kernels \[ \psi_1(y,e)=e \qquad\text{and}\qquad \psi_2(y,e)=e^2. \]
Moreover, \[ \mathbf 1 \notin \operatorname{span}\{\psi_1,\psi_2\} \quad \mu\text{-a.e. on }X. \]
Although the maps $\widetilde h_i$ are not finite on the whole space $\mathbb R^2$, Remark~\ref{rem:abs_potential_using_dom} allows the results of Section~3.1 to be applied on the effective domain of the total cost operator, provided that this domain is convex.
In the present setting, this effective domain is \[ D_h := \left\{ a\in\mathcal A: \lambda\Psi_1a_i<2p_i \text{ for every }i\in I \right\}, \] which is convex.
Furthermore, on its effective domain, each map $\widetilde h_i$ is continuously differentiable and coordinate-wise non-decreasing.
It is also strictly increasing with respect to its second argument whenever $u>0$, since \[ \partial_v\widetilde h_i(u,v) = \frac{\omega\lambda^2u} {2p_i\bigl(4p_i^2-\lambda^2u^2\bigr)} >0 \qquad \text{whenever }u>0\text{ and }\lambda u<2p_i. \] Therefore, Proposition~\ref{prop:aggregate}, together with Remark~\ref{rem:abs_potential_using_dom}, shows that the game does not admit a potential. However, Theorem \ref{main_result} yields the existence and uniqueness of a PNE, as well as,s for any $\sigma\ge 0$, of a SVNE$_\sigma$.

\paragraph{Role of energy-demand heterogeneity.}

In this setting, players contribute heterogeneously to congestion through their
energy demand, but not through their location. While location affects the
transportation cost, a larger energy demand entails a longer service time and
thus generates a greater congestion externality for other EV users.
This differs from previous works such as
\cite{Yuan2015_TSG,Liu2023_IJST,Zhou2021_AE}, which model users'
charging-station choices and the resulting congestion, but do not explicitly
account for the effect of heterogeneous energy demands on both station choice
and waiting times. In \cite{Yuan2015_TSG}, service-time variability is
represented by an exogenous distribution, whereas \cite{Liu2023_IJST} assumes
deterministic charging times for each charger type. Similarly,
\cite{Zhou2021_AE} represents charging demand using a common charging-power
parameter. Consequently, congestion in these models is driven by the aggregate
flow-mass of users choosing a station, i.e. $\int_X a_i(x)\,d\mu(x)$, whereas in our framework it also
depends on the composition of this population in terms of energy demand.
\newline

The parameters used in the numerical experiments are reported in
Table~\ref{tab:station_parameters}. Note that the two CSs differ only in their energy prices $r_i$. Since they are symmetrically located and players are uniformly distributed over the spatial domain, neither CS has a locational advantage.

\begin{center}
    \begin{tabular}{|c|c|c|}
        \hline
        Parameter
        & Station 0 (\(i=0\))
        & Station 1 (\(i=1\))
        \\
        \hline
        \(r_i\) (\(\text{\euro}/\mathrm{kWh}\))
        & \(0.50\)
        & \(0.35\)
        \\
        \hline
        \(p_i\) (\(\mathrm{kW}\))
        & \(150\)
        & \(150\)
        \\
        \hline
        \(z_i\)
        & \(3\)
        & \(7\)
        \\
        \hline
        \(d\) (\(\text{\euro}/\mathrm{km}\))
        & \multicolumn{2}{c|}{\(1.5\)}
        \\
        \hline
        \(\omega\) (\(\text{\euro}/\mathrm{h}\))
        & \multicolumn{2}{c|}{\(30\)}
        \\
        \hline
        \(\lambda\) (\(\mathrm{h}^{-1}\))
        & \multicolumn{2}{c|}{\(10\)}
        \\
        \hline
    \end{tabular}
    \captionof{table}{Parameter values used in the numerical experiments.}
    \label{tab:station_parameters}
\end{center}

In the paragraphs below, we first let $\sigma$ tend to zero, and see the apparition of the expected Voronoï cells characterising a PNE. 
In the second paragraph, we conduct a sensitivity analysis with respect to the mass of players $\lambda$ to evaluate the influence of the congestion on the system.

\paragraph{Parameter continuation: warm start}

When varying $\sigma$ and $\lambda$ in the numerical experiments below, we use a warm-start strategy. The values of $\sigma$ are considered in decreasing order, starting from the largest value, whereas the values of $\lambda$ are considered in increasing order, starting from the smallest value. For each parameter sweep, the equilibrium computed at the previous parameter value is used as the initial iterate of the good Broyden method at the next value.
\vspace{0.5em}

\paragraph{Numerical discretization of the total cost} The total cost $x \mapsto c_i(x) + h_i(a_i(x))$ is evaluated numerically on a uniform $1000 \times 1000$ grid over $X=[0,10]\times[0,50]$. This corresponds to grid spacings of $0.01$ km for the users' locations and $0.05$ kWh for their energy demands.

\subsubsection{Convergence to the PNE as $\sigma$ decreases}

We make a sensibility study and let $\sigma$ goes to 0. From the proof of Theorem \ref{thm:existence_PNE} the sequence of $\mathrm{SVNE}_\sigma$ must converge to the $\mathrm{SVNE}_0$ defining a PNE. Since there are two CSs, we only display in Figure \ref{fig:sensitivity_sigma}, the probability of players to choose the CS 1 depending of there characteristic: we observe the expected convergence toward a PNE.
\begin{center}
    \includegraphics[scale=0.25]{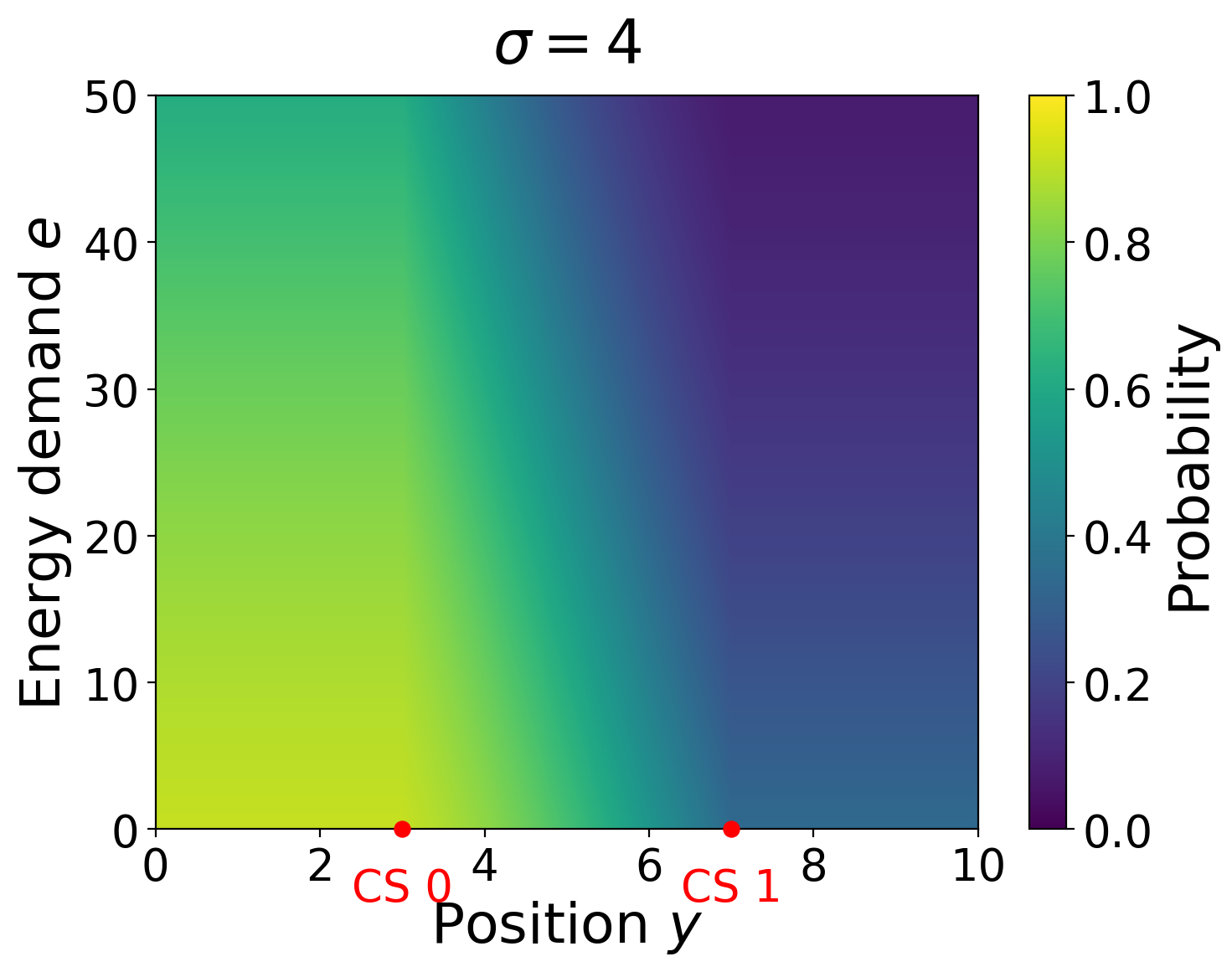}
    \includegraphics[scale=0.25]{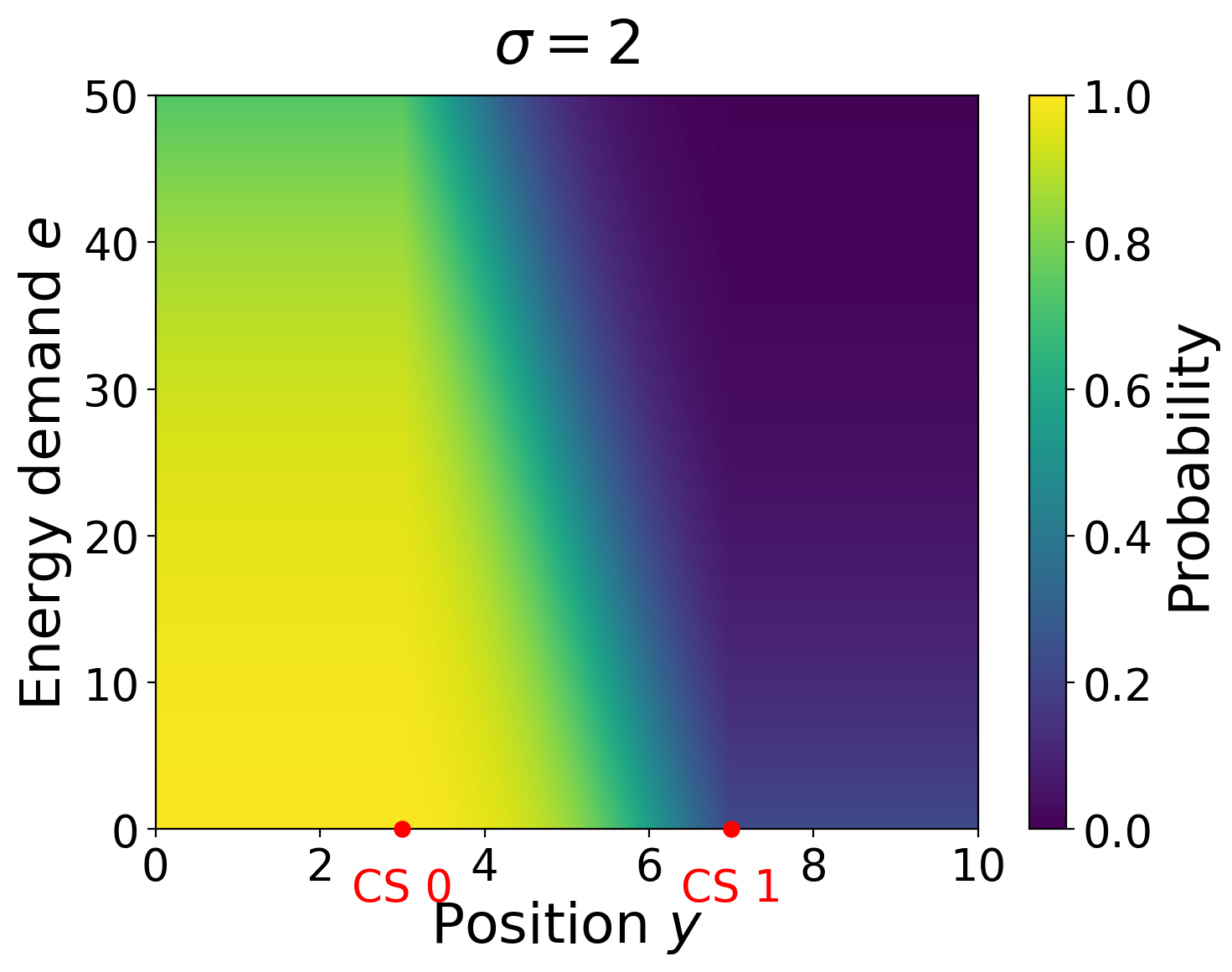}
    \includegraphics[scale=0.25]{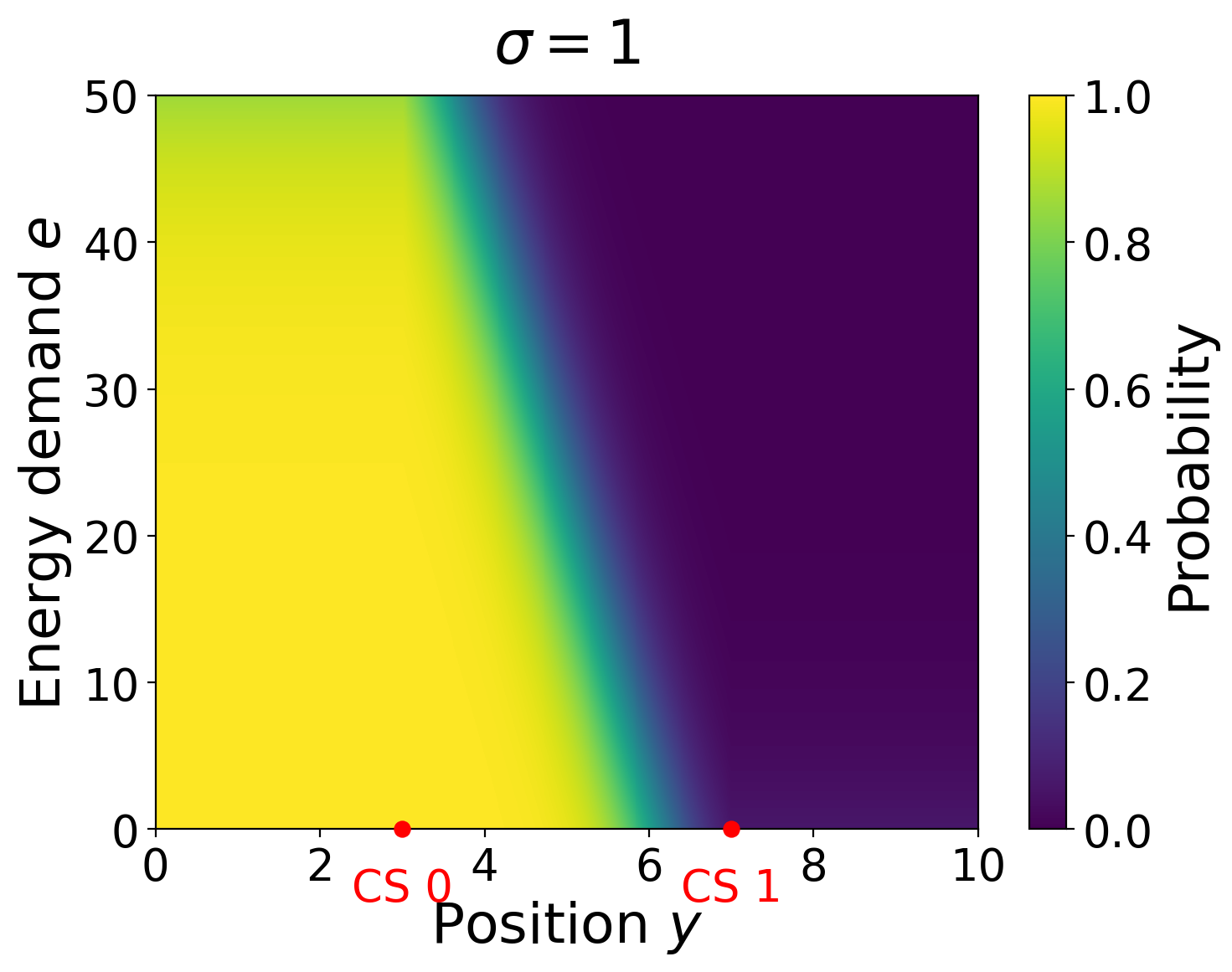}
    \includegraphics[scale=0.25]{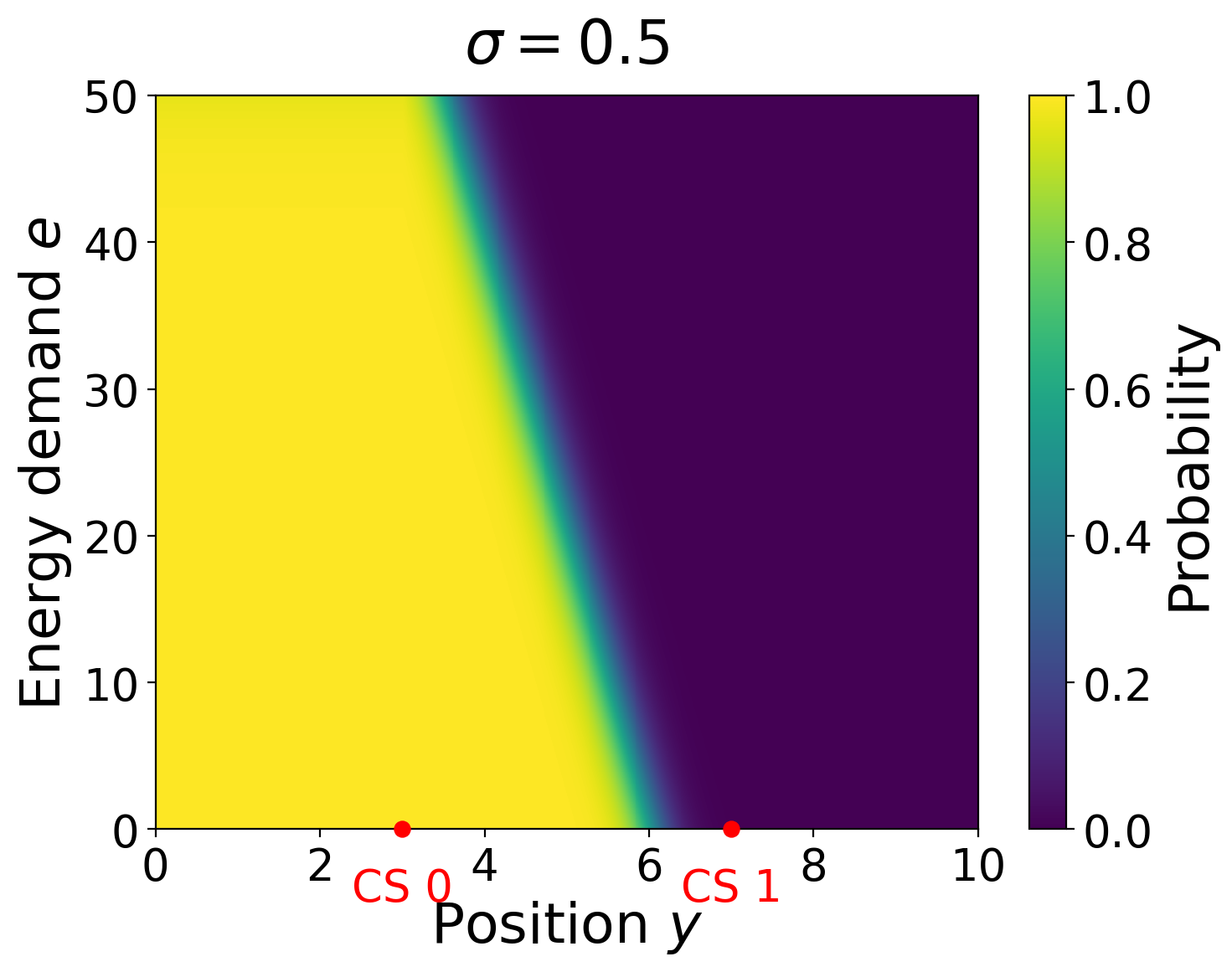}
    \includegraphics[scale=0.25]{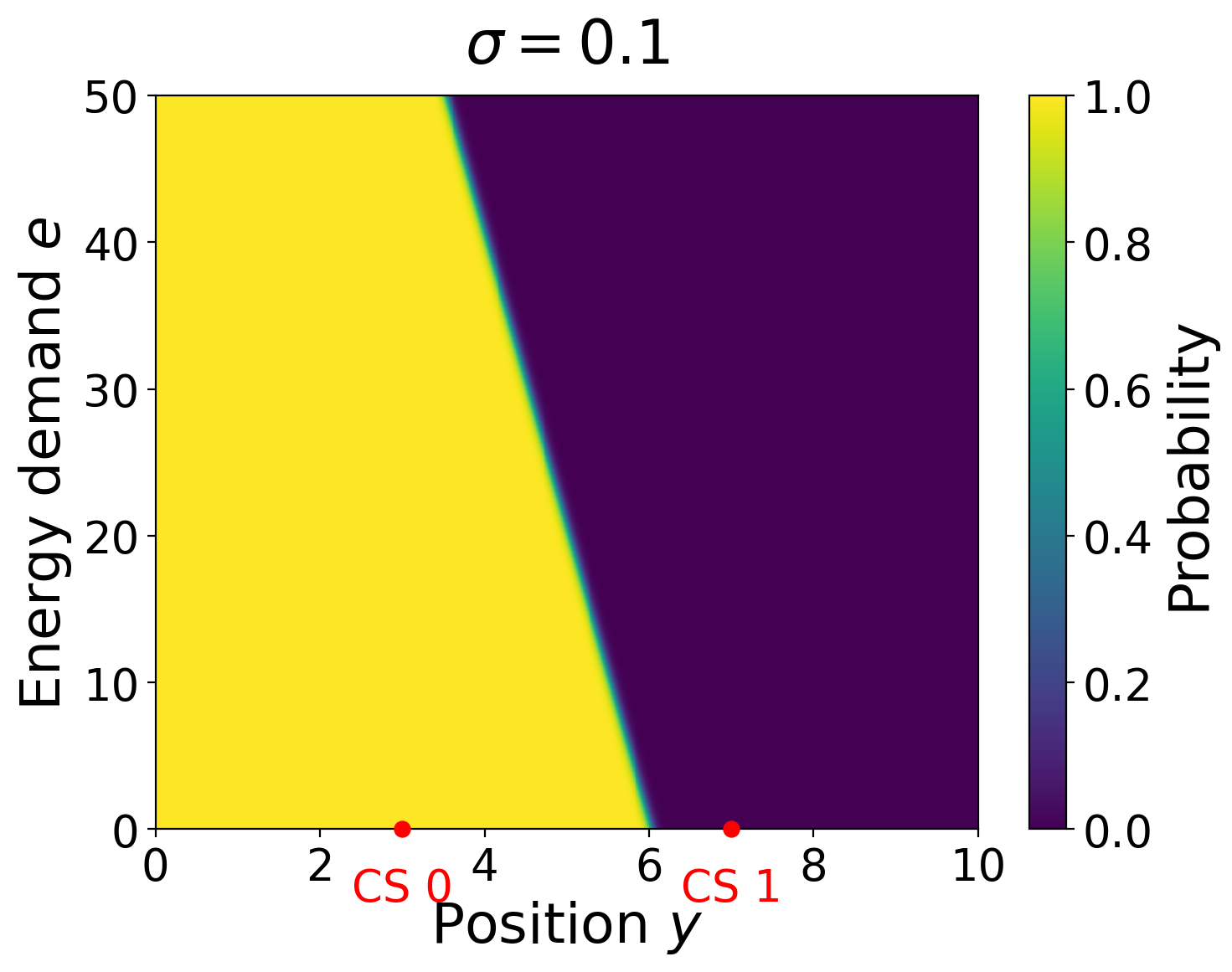}
    \includegraphics[scale=0.25]{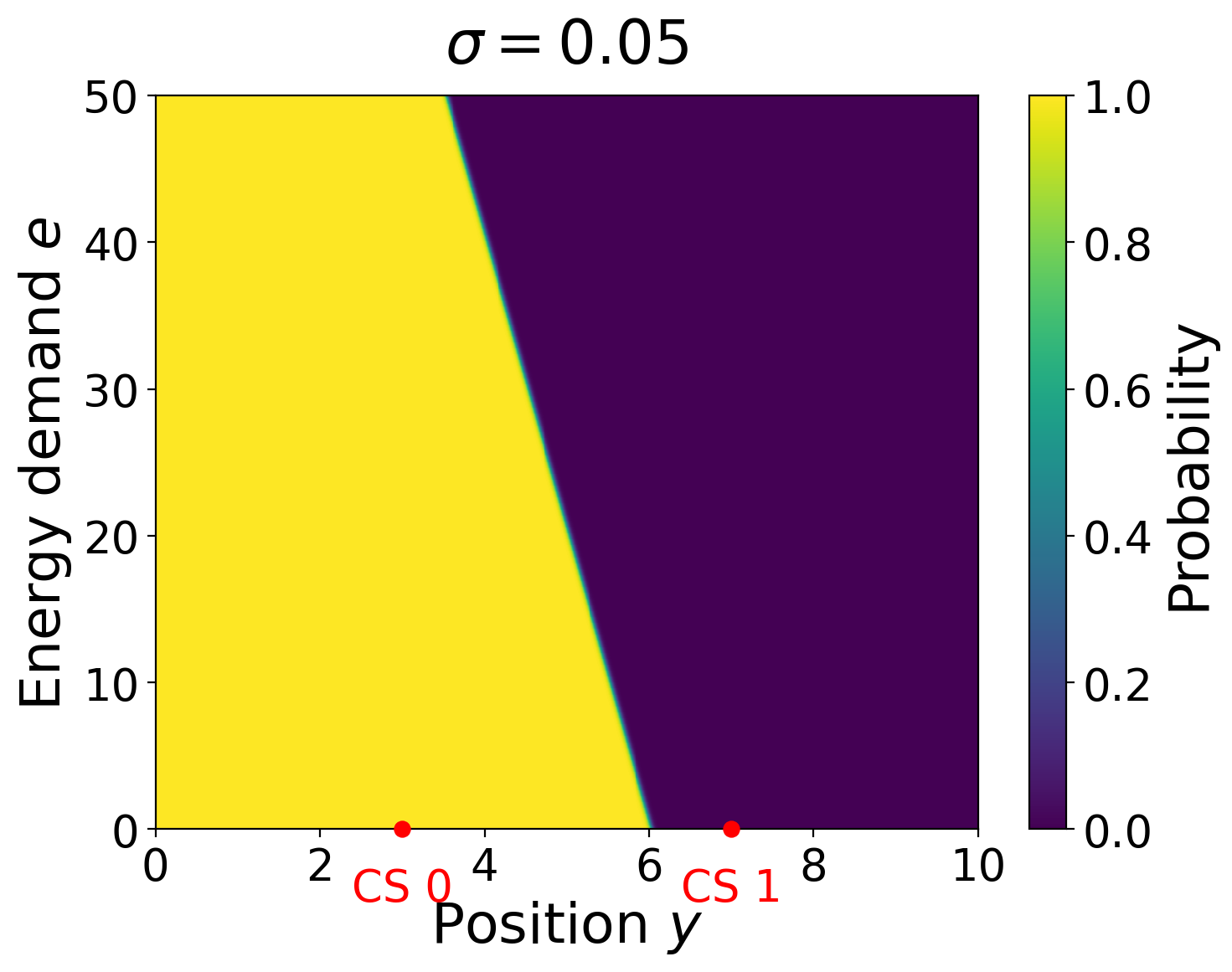}
    \captionof{figure}{\label{fig:sensitivity_sigma}Probability of choosing CS 1 for different values of $\sigma$.}
\end{center}

\subsubsection{Comparative statics with respect to  \texorpdfstring{$\lambda$}{lambda}}

In the following, the parameter $\sigma$ is set to $0.005$. A value close to $\sigma=0$ is chosen in order to approximate the pure Nash equilibrium. 
We now relax the assumption that $\lambda=10$, and perform a comparative statics in which the total flow mass $\lambda$ varies between $10$ and $20$.
This analysis enable to observe how congestion influences the users' choices.
Figure \ref{fig:proba_to_go_to_CS1_mass} below displays the probability of choosing CS 0 for different values of $\lambda$.
\begin{center}
    \includegraphics[scale=0.25]{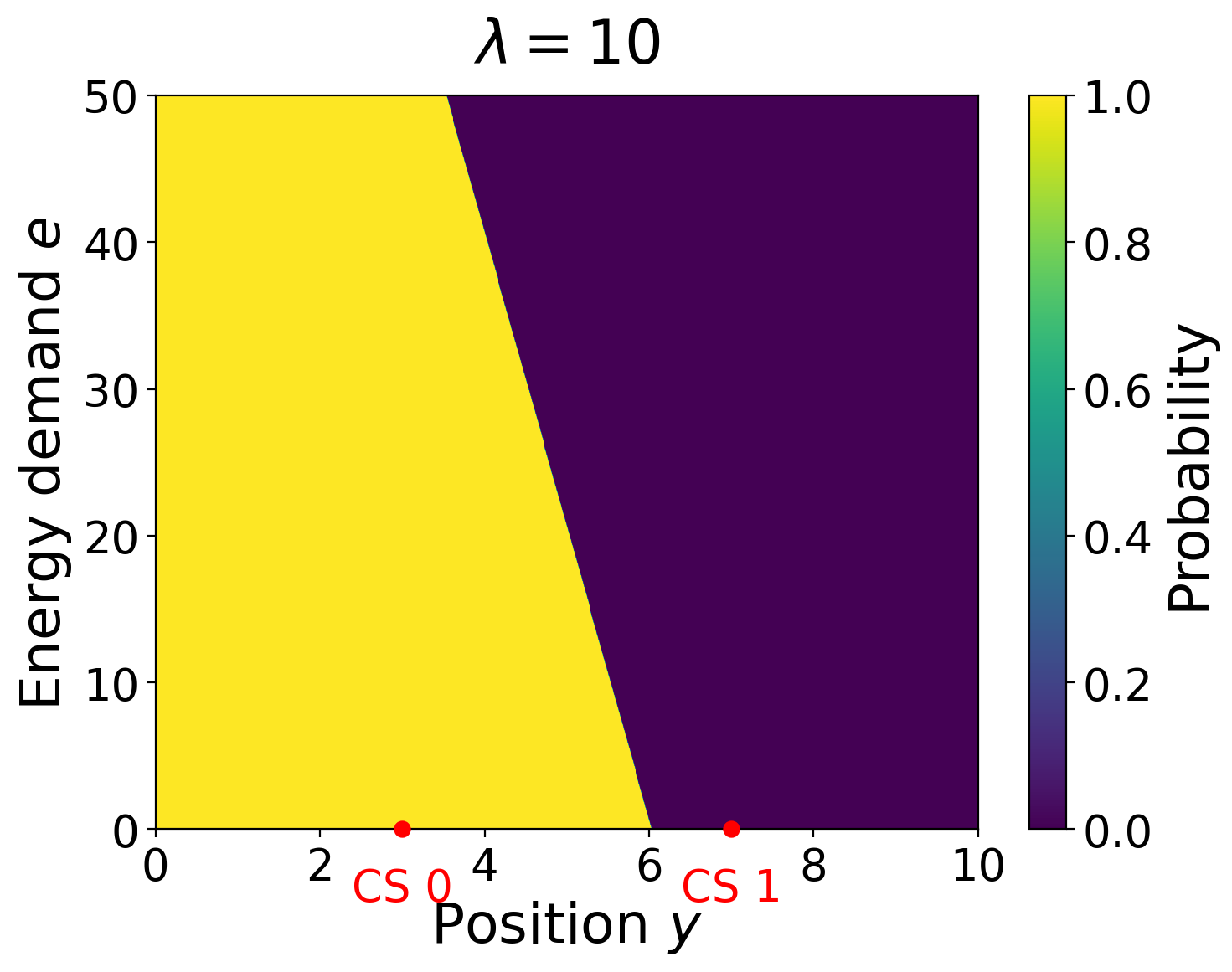}
    \includegraphics[scale=0.25]{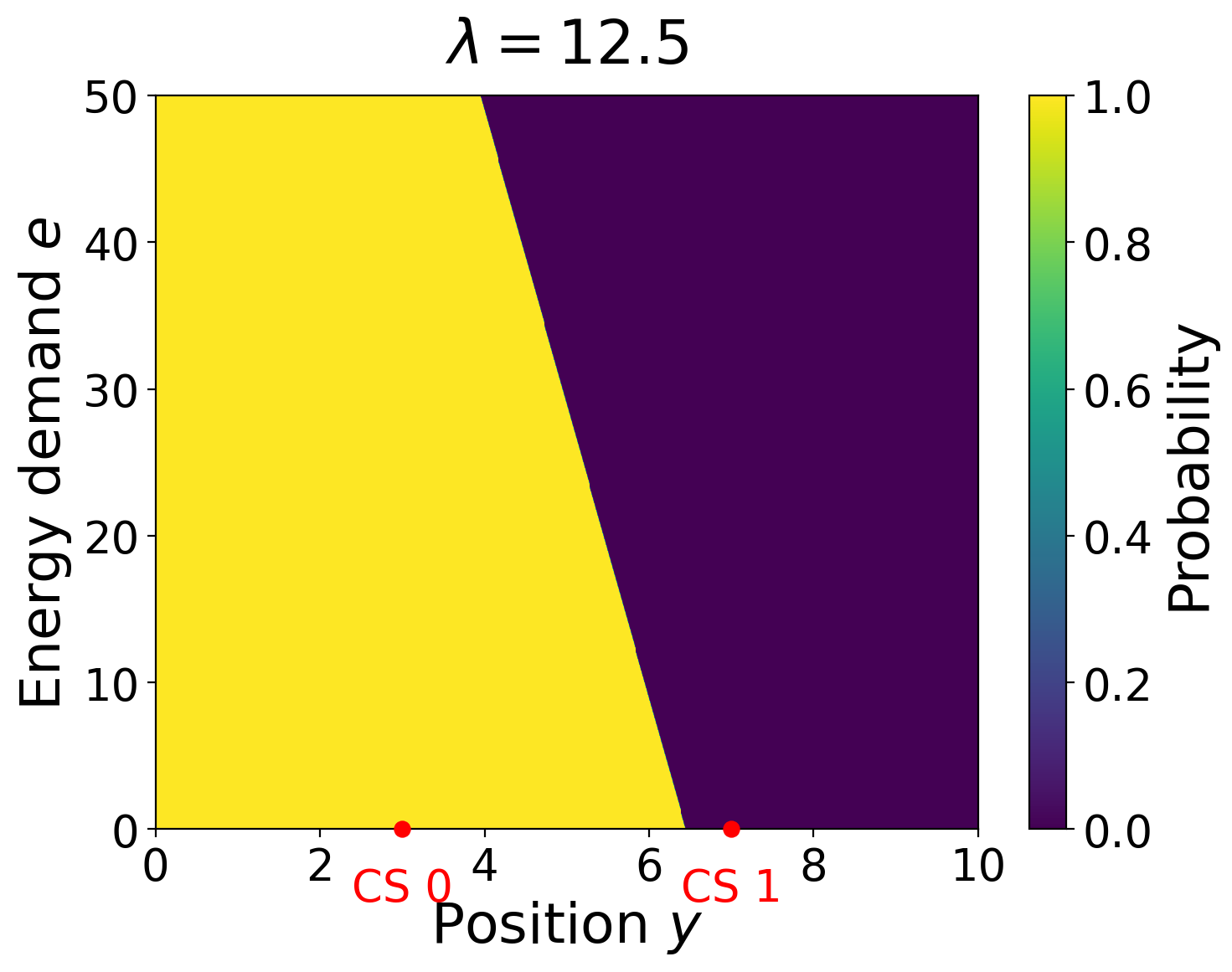}
    \includegraphics[scale=0.25]{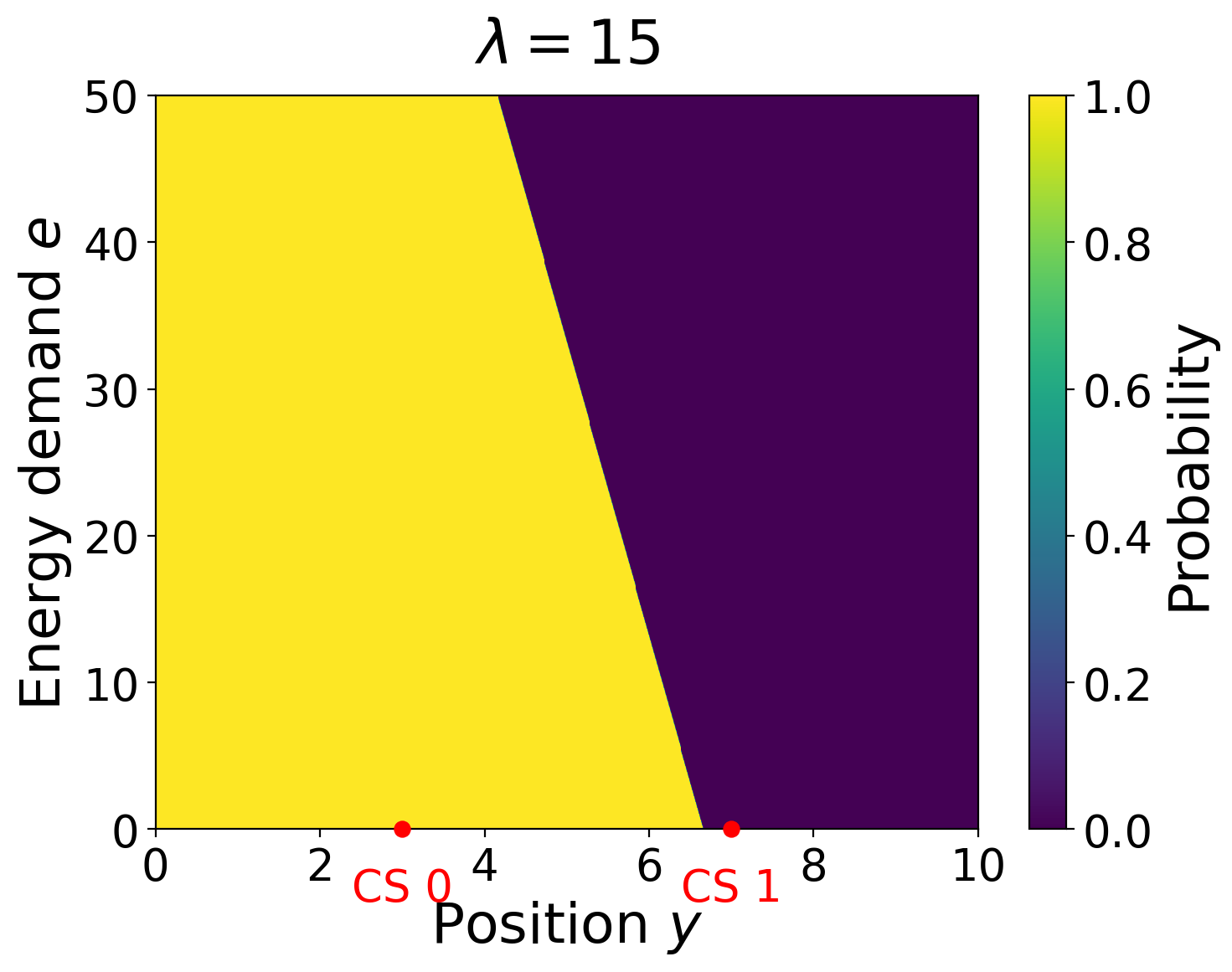}
    \captionof{figure}{\label{fig:proba_to_go_to_CS1_mass}Probability of choosing CS 0 for different values of the total flow mass $\lambda$.}
\end{center}

Note that for $\lambda=10$, the graph is the same as in Figure \ref{fig:proba_to_go_to_CS1_mass} when $\sigma=0.005$. When $\lambda=10$, we first observe that more EVs are attracted by CS 1, due to its advantageous price (recall that $r_1 = 0.35$ \euro/kWh against $r_0 = 0.5$ \euro/kWh, and all other characteristics are symmetrical between the two CSs). In particular, EVs with large energy demand prioritise CS 1, as the boundary tilts to the left as $e$ increases. However, this tendency with the congestion. Indeed, the larger $\lambda$, the further to the right the frontier between the two Voronoi cells lies: as congestion builds up at CS 1, part of the demand is redirected towards CS 0. These observations are confirmed by Figure \ref{fig:dashboard}, which displays the mass of players choosing each CS, $\int a_i \, d\mu$, together with its energy market share,
\begin{displaymath}
    \frac{\Psi_1 a_i}{(\Psi_1 a_1) +(\Psi_1 a_2)}.
\end{displaymath}

\begin{center}
    \includegraphics[scale=0.35]{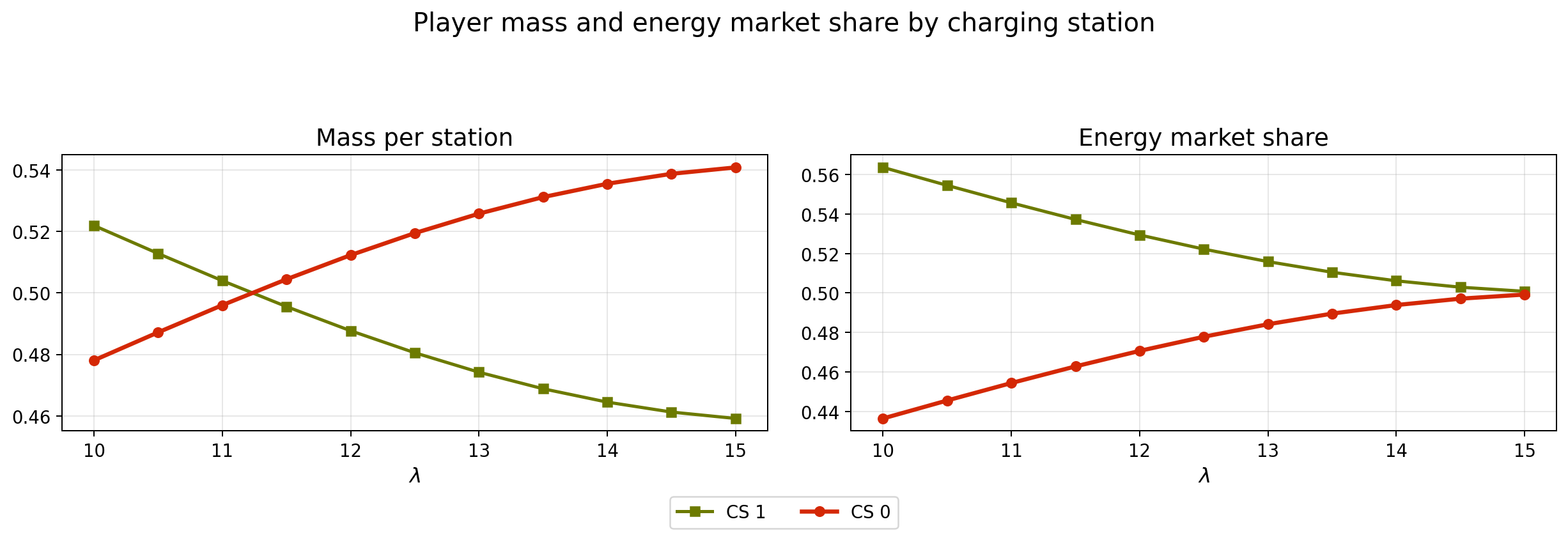}
    \captionof{figure}{\label{fig:dashboard}Mass and energy market share of each station as functions of $\lambda$.}
\end{center}

When $\lambda=10$, CS 1 attracts the majority of EVs and its market share is maximal. However, the physical capacity constraint modelled by $h$ implies that the waiting time blows up as the total mass $\lambda$ increases, driving both market shares towards 50\%. Note that, for large $\lambda$, CS0 attracts more EVs, but with smaller energy demand than those choosing CS 1, which catches a larger energy market share.
These observations further highlight the importance of incorporating users' energy demands into their strategy profiles.
Figures \ref{fig:dashboard} thus highlights the importance of incorporating users' energy demands into their strategy profiles: a CS can attract more EVs than the others but can still have less market share. 

\begin{center}
    \includegraphics[scale=0.42]{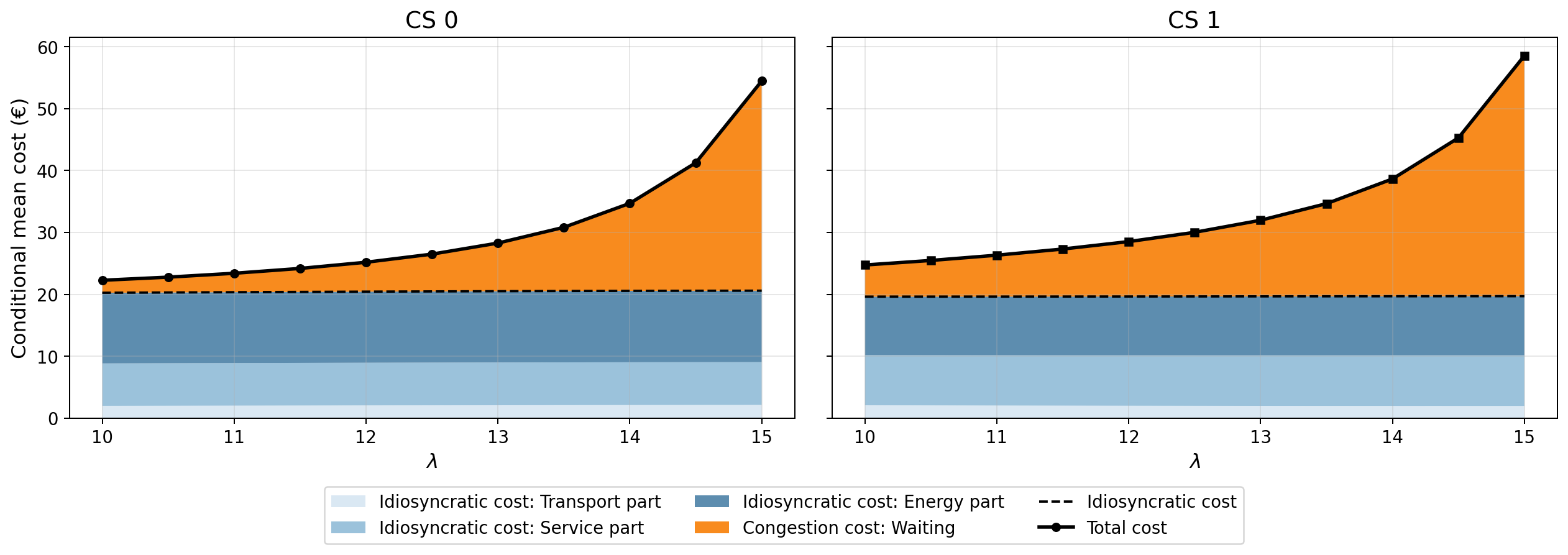}
    \captionof{figure}{\label{fig:costs} Mean cost of EVs conditionnally to choosing CS 0 (left), and CS 1 (right).}
\end{center}

Figure \ref{fig:costs} displays the decomposition of the average cost paid by the EVs choosing CS 0 or 1. The area below the dashed line represents the idiosyncratic cost, decomposed in our setting by the transport, the service and the energy cost. As expected, the two CSs exhibit different cost profiles, reflecting the different populations they attract. EVs with large energy demand make longer detours in order to charge where the price is low, i.e., at CS 1. As a consequence, both the average transport and service costs are higher at CS 1. Althouth the energy demand at CS 1 is higher in average compared to CS 0, the overall energy cost at CS 1 is lower due to the lower charging price. Overall, the idiosyncratic cost at CS 1 is lower for all values of $\lambda$.

The area above the dashed line represents the congestion cost, represented in this setting by the waiting cost. Figure \ref{fig:costs} also illustrates the sharp increase of the waiting cost as the total mass $\lambda$ approaches 2. Note that the graph does not represents the cost for $\lambda > 1.8$, since the explosion of the waiting time beyond this value would make the idiosyncratic cost negligible compared to the congestion cost.

\section{Conclusion}\label{sec:conclusion}

In this paper, we studied a class of congestion games with a continuum of
heterogeneous players and a finite set of resources, without assuming the
existence of a potential function. The proposed framework allows the
interaction cost associated with a resource to depend on general aggregate
characteristics of the players choosing it. It therefore captures situations
in which players differ not only in their individual preferences, but also in
the congestion externalities they impose on others.

We first characterized the potential structure of the game and showed that it
may fail as soon as players contribute differently to congestion. To address
the resulting non-potential setting, we introduced the notion of a Stochastic
Variational Nash Equilibrium. This equilibrium describes each player's
strategy as a function of both their characteristics and an idiosyncratic
random shock. We established that players use pure strategies after observing
the shock and showed that, under Gumbel-distributed shocks, the resulting
aggregate allocation coincides with a logit Quantal Response Equilibrium.

The stochastic regularization also provides a means of establishing the
existence of equilibrium without relying on an optimization problem. For every
positive noise parameter, and under the stated assumptions, we proved the existence of a Stochastic Variational Nash Equilibrium through the Schauder--Tychonoff fixed-point theorem. By
letting the noise parameter tend to zero, we then recovered the existence of a
pure Nash equilibrium. Under the additional coordinate-wise monotonicity condition on the
aggregate interaction costs, we further established uniqueness of both the
regularized equilibrium and the pure Nash equilibrium. Under the same additional condition, we also showed that the allocations induced by the regularized equilibria converge to the unique pure
Nash equilibrium as the noise vanishes.

The numerical application to electric vehicle charging illustrates the
relevance of accounting for heterogeneous contributions to congestion. In
this application, users differ in both their locations and their energy
demands. The simulations illustrate the convergence of the
regularized allocation towards a deterministic partition as the noise
parameter decreases. Importantly, the station attracting the largest number of users
need not serve the largest share of total energy demand. This distinction
highlights the importance of considering the composition of the population,
rather than only its mass, when modeling congestion.

Promising directions for future research include
deriving quantitative convergence rates for the zero-noise limit and to develop numerical methods
with convergence guarantees for small values of the regularization parameter.
These developments could also provide a basis for
bilevel models in which a system operator, or a set of system operators acting in a competitive environment themselves, optimizes resource characteristics such as prices, capacities, or locations while accounting for the equilibrium response of a
heterogeneous population.

\paragraph*{Acknowledgements.} The authors thank Fabien Leurent for fruitful discussions and valuable recommendations.

\paragraph*{AI usage statement.} AI was utilized during the drafting process for grammatical correction and provide a preliminary check of the mathematical consistency. 
In addition, AI helped to find the proper framework in Section \ref{subsec:non_potential_game} to obtain Proposition \ref{prop:potential_condition} by adapting the arguments developed in \cite{Vainberg1964}. The core concepts and mathematical proofs of the rest of the paper remain the original work of the authors. The authors take full responsibility for the correctness and content of the manuscript.

\appendix
\section{Proof of Theorem \ref{thm:path_characterization}}
\label{app:proof_thm_path}
\emph{Step 1: \ref{item:pot} $\Rightarrow$ \ref{item:tri}.}
Let $\mathcal G$ be a potential and fix $a,a'\in\mathcal A$. Set

\begin{displaymath}
    b=a'-a\in\mathcal H_0,\quad a_t=a+tb\quad \text{and
}\quad\varphi(t)=\mathcal G(a_t),\qquad \forall t\in[0,1].
\end{displaymath}
Let $t\in(0,1)$. Since
$a_{t+s}\in\mathcal A$ for $s\in[-t,1-t]$, both $b$ and $-b$ belong to
$\mathcal T_{\mathcal A}(a_t)$. Definition~\ref{def:potential_games}
applied in the direction $b$, respectivaly $-b$, gives that the right, respectively the left, derivative of $\varphi$
at $t$ exists and equals
\begin{displaymath}
    \lim_{s\downarrow0}\frac{\varphi(t+s)-\varphi(t)}{s}
=\langle\boldsymbol h(a_t),b\rangle_{L^2}, \text{ respectively }-\lim_{s\downarrow0}\frac{\varphi(t-s)-\varphi(t)}{s}
=-\langle\boldsymbol h(a_t),-b\rangle_{L^2}.
\end{displaymath}
Let us set $g(t) = \langle\boldsymbol h(a_t),b\rangle_{L^2}$ for any $t\in[0,1]$. Therefore, $\varphi$ is differentiable on
$(0,1)$ with $\varphi'=g$, where $g$ is continuous on $[0,1]$ by Assumption
\ref{ass:radial_continuity}. At $t=0$ (resp.\ $t=1$), the same argument in
the direction $b$ (resp.\ $-b$) shows that $\varphi$ admits a one-sided
derivative, so that $\varphi$ is continuous on $[0,1]$. The fundamental theorem of calculus yields
\begin{equation}\label{eq:increment_formula}
    \mathcal G(a')-\mathcal G(a)=\varphi(1) - \varphi(0)
    =\int_0^1 g(t)\,dt
    =\mathcal I(a\to a').
\end{equation}
The increment of $\mathcal G$ between two points thus equals the line
integral along the joining segment. Summing
\eqref{eq:increment_formula} along the edges of any closed polygonal path
telescopes to zero, which contains \eqref{eq:zero_circulation} as the
particular case $K=3$. Formula \eqref{eq:increment_formula} also proves
the last assertions of the theorem: any potential has increments
prescribed by $\mathcal I$, so that two potentials differ by an additive
constant.

\smallskip
\emph{Step 2: \ref{item:tri} $\Rightarrow$ \ref{item:lineint}.}
Fix $\bar a\in\mathcal A$ and define $\mathcal G$ by
\eqref{eq:potential_formula}. Let $a\in\mathcal A$,
$b\in\mathcal T_{\mathcal A}(a)$ and a positive $\varepsilon$, such that $a+tb\in\mathcal A$ for any
$t\in[0,\varepsilon]$. Applying \eqref{eq:zero_circulation} to the triangle
$(\bar a,\,a,\,a+tb)$ together with \eqref{eq:antisymmetry_I} gives, for
every $t\in(0,\varepsilon]$,
\begin{displaymath}
    \mathcal G(a+tb)-\mathcal G(a)
    =\mathcal I(a\to a+tb)
    =\int_0^1
    \langle\boldsymbol h(a+stb),\,tb\rangle_{L^2}\,ds
    =\int_0^t
    \langle\boldsymbol h(a+ub),\,b\rangle_{L^2}\,du .
\end{displaymath}
The map $u\mapsto\langle\boldsymbol h(a+ub),b\rangle_{L^2}$ is continuous
on $[0,\varepsilon]$ by Assumption \ref{ass:radial_continuity} applied to the pair
$(a,a+\varepsilon b)$. Dividing by $t$ and letting it vanish yield
$D^+_{\mathcal A}\mathcal G(a)[b]
=\langle\boldsymbol h(a),b\rangle_{L^2}$. Since, it can be done for any $b\in\cT(a)$, $\mathcal G$ is a
potential.

\smallskip
\emph{Step 3: \ref{item:lineint} $\Rightarrow$ \ref{item:pot}} is
immediate, which closes the cycle of implications.

\section{Proof of Proposition \ref{prop:potential_condition}}
\label{app:proof_prop_symmetry}
\emph{Necessity.}
Suppose that the game admits a potential $\mathcal G$.
Let $a\in\mathcal A$, and let $b,b'\in\mathcal T_{\mathcal A}(a)$ be jointly
feasible at $a$. There exists $\varepsilon>0$ such that
$a_{s,t}=a+sb+tb'$ belongs to $\mathcal A$ for every
$s,t\in[0,\varepsilon]$. Define
\begin{displaymath}
    \varphi(s,t)=\mathcal G(a_{s,t}),
    \qquad
    \forall(s,t)\in[0,\varepsilon]^2.
\end{displaymath}
Let $(s,t)\in(0,\varepsilon)^2$. Since $s\pm\delta,\,t\pm\delta\in
[0,\varepsilon]$ for $\delta>0$ small enough, both $b$ and $-b$ (resp.\ $b'$
and $-b'$) belong to $\mathcal T_{\mathcal A}(a_{s,t})$. Using
Definition~\ref{def:potential_games} in the directions $\pm b$ and $\pm b'$,
the one-sided derivatives from the right and from the left coincide, so
$\varphi$ admits genuine partial derivatives
\begin{displaymath}
    D_s\varphi(s,t)
    =
    \left\langle\boldsymbol h(a_{s,t}),b\right\rangle_{L^2},
    \qquad
    D_t\varphi(s,t)
    =
    \left\langle\boldsymbol h(a_{s,t}),b'\right\rangle_{L^2}.
\end{displaymath}
Since $\boldsymbol h$ is of class $C^1$ relative to $\mathcal A$, and since
the increments $a_{s,t\pm\delta}-a_{s,t}=\pm\delta b'$ correspond to
admissible points of $\mathcal A$, the map $t\mapsto\boldsymbol h(a_{s,t})$
is differentiable in $L^2$ with derivative
$D_{\mathcal A}\boldsymbol h(a_{s,t})[b']$, and similarly in $s$. Hence the
mixed partial derivatives
\begin{displaymath}
    D_tD_s\varphi(s,t)
    =
    \left\langle
        D_{\mathcal A}\boldsymbol h(a_{s,t})[b'],b
    \right\rangle_{L^2},
    \qquad
    D_sD_t\varphi(s,t)
    =
    \left\langle
        D_{\mathcal A}\boldsymbol h(a_{s,t})[b],b'
    \right\rangle_{L^2}
\end{displaymath}
exist on $(0,\varepsilon)^2$ and are continuous there, by continuity of
$D_{\mathcal A}\boldsymbol h$ in the operator norm and continuity of
$(s,t)\mapsto a_{s,t}$ in $L^2$. Schwarz's theorem then gives
\begin{displaymath}
    \left\langle
        D_{\mathcal A}\boldsymbol h(a_{s,t})[b'],b
    \right\rangle_{L^2}
    =
    \left\langle
        D_{\mathcal A}\boldsymbol h(a_{s,t})[b],b'
    \right\rangle_{L^2},
    \qquad
    (s,t)\in(0,\varepsilon)^2.
\end{displaymath}
Letting $(s,t)\to(0,0)$ and using
\begin{displaymath}
    \bigl|
    \bigl\langle
        (D_{\mathcal A}\boldsymbol h(a_{s,t})
        -D_{\mathcal A}\boldsymbol h(a))[b'],b
    \bigr\rangle_{L^2}
    \bigr|
    \le
    \bigl\|
        D_{\mathcal A}\boldsymbol h(a_{s,t})
        -D_{\mathcal A}\boldsymbol h(a)
    \bigr\|_{\mathcal L(\mathcal H_0,L^2)}
    \|b'\|_{L^2}\|b\|_{L^2},
\end{displaymath}
together with the continuity of $D_{\mathcal A}\boldsymbol h$, yields
\eqref{eq:symmetry_condition}.

\medskip
\noindent
\emph{Sufficiency.}
Fix $\bar a\in\mathcal A$ and define
$\mathcal G(a)=\mathcal I(\bar a\to a)$. We show directly that $\mathcal G$ is
a potential. Let $a\in\mathcal A$, $b\in\mathcal T_{\mathcal A}(a)$ and a positive $\varepsilon$ such that $a+sb\in\mathcal A$ for all $s\in[0,\varepsilon]$, and set
\begin{displaymath}
    b'=a-\bar a\in\mathcal H_0,
    \qquad
    c_t=\bar a+t\,b'\in\mathcal A,\quad t\in[0,1].
\end{displaymath}
For $s\in[0,\varepsilon]$ and $t\in[0,1]$, convexity gives
$c_t+ts\,b=(1-t)\,\bar a+t\,(a+sb)\in\mathcal A$, so that
\begin{displaymath}
    \mathcal G(a+sb)
    =\int_0^1
    \bigl\langle
        \boldsymbol h(c_t+ts\,b),\,b'+s\,b
    \bigr\rangle_{L^2}\,dt .
\end{displaymath}
For fixed $t$, the points $c_t+ts\,b$, $s\in[0,\varepsilon]$, all lie in
$\mathcal A$, so relative $C^1$ regularity makes the integrand
differentiable in $s$, with derivative
\begin{displaymath}
    [0,\varepsilon]\ni s\mapsto t\,\langle D_{\mathcal A}\boldsymbol h(c_t+ts\,b)[b],\,
b'+s\,b\rangle_{L^2}
+\langle\boldsymbol h(c_t+ts\,b),\,b\rangle_{L^2}.
\end{displaymath}
It is jointly continuous in $(s,t)$, hence bounded on
$[0,\varepsilon]\times[0,1]$. Differentiating under the integral sign at
$s=0^+$ therefore yields
\begin{equation}\label{eq:homotopy_derivative}
    D^+_{\mathcal A}\mathcal G(a)[b]
    =\int_0^1
    \Bigl[
    t\,\bigl\langle
        D_{\mathcal A}\boldsymbol h(c_t)[b],\,b'
    \bigr\rangle_{L^2}
    +\bigl\langle\boldsymbol h(c_t),\,b\bigr\rangle_{L^2}
    \Bigr]\,dt .
\end{equation}
Fix now $t\in(0,1)$. For $u\in[0,t\varepsilon]$ and $v\in[0,1-t]$, one has
$u/(t+v)\le\varepsilon$, so convexity gives
\begin{displaymath}
    c_t+u\,b+v\,b'
    =\bigl(1-(t+v)\bigr)\,\bar a
    +(t+v)\,\Bigl(a+\tfrac{u}{t+v}\,b\Bigr)
    \;\in\;\mathcal A .
\end{displaymath}
Hence $b$ and $b'$ are jointly feasible at $c_t$, and the symmetry
hypothesis \eqref{eq:symmetry_condition} gives
\begin{equation}\label{eq:symmetry_chord}
    \bigl\langle
        D_{\mathcal A}\boldsymbol h(c_t)[b],\,b'
    \bigr\rangle_{L^2}
    =
    \bigl\langle
        D_{\mathcal A}\boldsymbol h(c_t)[b'],\,b
    \bigr\rangle_{L^2},
    \qquad t\in(0,1).
\end{equation}
Set $\varphi(t)=\langle\boldsymbol h(c_t),b\rangle_{L^2}$ for all $t$. As in Step 1, $\varphi$
is continuous on $[0,1]$ and differentiable on $(0,1)$, with bounded
continuous derivative
$\varphi'(t)=\langle D_{\mathcal A}\boldsymbol h(c_t)[b'],b\rangle_{L^2}$.
By \eqref{eq:symmetry_chord}, the integrand of
\eqref{eq:homotopy_derivative} equals
$t\,\varphi'(t)+\varphi(t)=\frac{d}{dt}\bigl[t\,\varphi(t)\bigr]$ on $(0,1)$. Hence
\begin{displaymath}
    D^+_{\mathcal A}\mathcal G(a)[b]
    =\int_0^1\frac{d}{dt}\bigl[t\,\varphi(t)\bigr]\,dt
    =\varphi(1)
    =\bigl\langle\boldsymbol h(a),\,b\bigr\rangle_{L^2},
\end{displaymath}
so $\mathcal G$ is a potential.

\section{Well-posedness of the derivative relative to \texorpdfstring{$\mathcal{A}$}{A}}
\label{app:derivative}Definition~\ref{def:C1-relative} is in the spirit of Whitney's notion of a $C^1$ function on a closed subset of $\mathbb R^n$~\cite{Whitney1934}, and of the flat derivative on the space of probability measures \cite[Sect.~2.2]{CardaliaguetDelarueLasryLions2019}. In these notions, the first-order expansion identifies the derivative only through its pairing with increments between admissible points. Consequently, any component orthogonal to the linear span of such increments is not detected by the expansion, and the derivative may therefore fail to be uniquely determined in the ambient space. For the flat derivative, the relevant increments are signed measures with zero total mass, so additive constants are invisible. Uniqueness is then recovered by imposing a normalization convention. 

\smallskip
\noindent
In our setting, no additional normalization is required. Indeed, $D_{\mathcal A}F(a)$ is required to belong to $\cL(\mathcal H_0,E)$, and Lemma~\ref{lem:uniqueness} shows that the linear span of the admissible increments $\{b-a : a,b\in\mathcal A\}$ is dense in $\mathcal H_0$. Hence, if two elements of $\mathcal H_0$ satisfy \eqref{eq:C1-limit}, their difference is orthogonal to a dense subspace of $\mathcal H_0$, and must therefore vanish. Thus, \eqref{eq:C1-limit} determines $D_{\mathcal A}F(a)$ uniquely as an element of $\cL(\mathcal H_0,E)$.
\begin{lemma}[Uniqueness of the relative derivative]
\label{lem:uniqueness}
For every $a\in\mathcal A$, the closed linear span of
$\mathcal A-a=\{\widetilde a-a:\widetilde a\in\mathcal A\}$ in
$L^2(X,\mu;\mathbb R^N)$ equals $\mathcal H_0$. Consequently, the map
$D_{\mathcal A}F$ in Definition~\ref{def:C1-relative} is uniquely determined
by \eqref{eq:C1-limit}.
\end{lemma}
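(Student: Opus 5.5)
The plan is to prove the density statement directly, by showing that every element of $\mathcal H_0$ can be approximated in $L^2$ by finite linear combinations of increments $\widetilde a-a$ with $\widetilde a\in\mathcal A$. Uniqueness of $D_{\mathcal A}F$ then follows from the orthogonality argument already sketched before the lemma. Fix $a\in\mathcal A$ and let $V$ denote the closed linear span of $\mathcal A-a$. The inclusion $V\subset\mathcal H_0$ is immediate from Remark~\ref{rem:structure_A}, since each increment lies in $\mathcal H_0$ and $\mathcal H_0$ is a closed subspace, being the kernel of the continuous map $b\mapsto\sum_i b_i$ into $L^2(X,\mu)$. The substance is the reverse inclusion.

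To obtain it, I will first reduce to bounded directions. Bounded elements of $\mathcal H_0$ are dense in $\mathcal H_0$: given $b\in\mathcal H_0$, I truncate $b_1,\ldots,b_{N-1}$ at level $n$ and set $b_N:=-\sum_{i<N}b_i$ for the truncated vector. This stays in $\mathcal H_0$ and converges to $b$ in $L^2$ by dominated convergence. It therefore suffices to show $b\in V$ whenever $b\in\mathcal H_0\cap L^\infty$.

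For such a $b$, I decompose it as $b=\sum_{i=1}^{N-1}b_i(e_i-e_N)$ pointwise, where $e_i$ denotes the $i$-th unit vector-valued constant function. Since $V$ is a linear space, it is enough to show that each $\phi(e_i-e_N)$ lies in $V$ for every bounded scalar $\phi$. I write $\phi=\phi^+-\phi^-$, rescale so that $0\le\phi\le1$, and consider the profile $\widetilde a:=a+\phi\,(e_i-e_N)$. This is not feasible in general, because $a_N$ may be small where $\phi$ is large.

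The main obstacle is this feasibility issue, and I plan to handle it by routing through vertices. Define $\delta^j:=(e_j)$, the pure profile sending everyone to resource $j$, which belongs to $\mathcal A$. Each difference $\delta^j-a$ lies in $\mathcal A-a$, so every $\delta^i-\delta^N=(\delta^i-a)-(\delta^N-a)$ lies in $V$. More generally, for a measurable set $B$, the profile $\delta^{j,B}:=\ind_B e_j+\ind_{B^c}e_N$ belongs to $\mathcal A$. Hence
\begin{displaymath}
\ind_B(e_j-e_N)=\delta^{j,B}-\delta^N=(\delta^{j,B}-a)-(\delta^N-a)\in V .
\end{displaymath}
Taking linear combinations, $\phi(e_j-e_N)\in V$ for every simple function $\phi$. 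Since bounded $\phi$ are uniform limits of simple functions, and $\mu$ is finite (it lies in $L^\infty$ on a bounded domain), these combinations converge in $L^2$, so closedness of $V$ gives $\phi(e_j-e_N)\in V$ for all bounded $\phi$. This proves $V=\mathcal H_0$.

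Finally, for the consequence, suppose $L_1,L_2\in\mathcal L(\mathcal H_0,E)$ both satisfy \eqref{eq:C1-limit} at $a$. Subtracting the two expansions along $\widetilde a=a+t(a'-a)$ with $t\downarrow0$ gives $(L_1-L_2)[a'-a]=0$ for every $a'\in\mathcal A$; note that the radial points are admissible by convexity of $\mathcal A$. By linearity, $L_1-L_2$ vanishes on the span of $\mathcal A-a$. By continuity it vanishes on $V=\mathcal H_0$, so $L_1=L_2$.
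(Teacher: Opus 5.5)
Your proposal is correct and follows essentially the same route as the paper: both write $\ind_B(e_j-e_k)$ as a difference of two pure allocations in $\mathcal A$ (the paper uses $e_1$ as the background resource, you use $e_N$), so every simple $V_0$-valued function lies in the span, and both then conclude by density. Your truncation and uniform-approximation steps are a hands-on proof of the density of simple $V_0$-valued functions in $\mathcal H_0$, which the paper simply cites. Your derivation of uniqueness along the radial points $a+t(a'-a)$ is a more careful version of the paper's informal orthogonality remark.
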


\begin{proof}
Since $\operatorname{span}(\mathcal A-a)\supseteq\mathcal A-\mathcal A$, it
suffices to show that $\mathcal A-\mathcal A$ spans a dense subspace of
$\mathcal H_0$. Let $(e_1,\ldots,e_N)$ denote the canonical basis of
$\mathbb R^N$. For $i\neq j$ and a measurable set $S\subseteq X$, the
allocations
\begin{displaymath}
    \widetilde a^{(1)}:=e_i\,\mathbf 1_S+e_1\,\mathbf 1_{S^c},
    \qquad
    \widetilde a^{(2)}:=e_j\,\mathbf 1_S+e_1\,\mathbf 1_{S^c}
\end{displaymath}
belong to $\mathcal A$, whence
$(e_i-e_j)\,\mathbf 1_S\in\mathcal A-\mathcal A$. Linear combinations of such
elements are exactly the simple functions with values in
$V_0:=\{v\in\mathbb R^N:\sum_{k=1}^N v_k=0\}
=\operatorname{span}\{e_i-e_j: i\neq j\}$, and these are dense in
$L^2(X,\mu;V_0)=\mathcal H_0$.
\end{proof}

\begin{remark}
If $F$ is of class $C^1$ relative to $\mathcal A$, then for every
$a\in\mathcal A$ and every feasible direction
$b\in\mathcal T_{\mathcal A}(a)$,
\begin{displaymath}
    \lim_{t\downarrow0}
    \frac{F(a+tb)-F(a)}{t}
    =
    D_{\mathcal A}^+F(a)[b],
\end{displaymath}
as follows from \eqref{eq:C1-limit} applied to $\widetilde a=a+tb$,
$t\downarrow 0$. Thus, the relative derivative represents all one-sided
feasible directional derivatives of $F$.
\end{remark}

\bibliographystyle{plain}
\bibliography{biblio}
\end{document}